\numberwithin{equation}{section}
\newtheorem{theorem}{Theorem}[section]
\newtheorem{proposition}{Proposition}[section]
\newtheorem{corollary}{Corollary}[section]
\theoremstyle{remark}
\newtheorem{remark}{Remark}[section]
\begin{document}

\title{Response of an oscillatory differential delay equation to a single stimulus \thanks{This
        work was supported by the Natural Sciences and Engineering Research Council (NSERC) of Canada and the Polish NCN grant no 2014/13/B/ST1/00224.}}
\author{Michael C. Mackey
\thanks{Departments of Physiology, Physics \& Mathematics, McGill University, 3655 Promenade Sir William Osler, Montreal, Quebec H3G 1Y6, Canada ({\tt michael.mackey@mcgill.ca}).}%}
        \and Marta Tyran-Kami\'nska
        \thanks{Institute of Mathematics, University of Silesia, Bankowa 14, 40-007 Katowice, Poland
        ({\tt mtyran@us.edu.pl}.)}
        \and Hans-Otto Walther
        \thanks{Mathematisches Institut, Universit{\"{a}}t Giessen, Arndtstrasse 2, 35392 Giessen, Germany
        ({\tt Hans-Otto.Walther@math.uni-giessen.de}.)}
        }

\date{\today}

%%for the svjour3
%\author{Michael C. Mackey \and Marta Tyran-Kami\'nska \and Hans-Otto Walther
%\institute{Michael C. Mackey \at Departments of Physiology, Physics \& Mathematics and Centre for Applied Mathematics in Bioscience and Medicine (CAMBAM), McGill University, 3655 Promenade Sir William Osler, Montreal, QC, CANADA, H3G 1Y6
%\\ \email{michael.mackey@mcgill.ca}\and Marta Tyran-Kami\'nska \at Institute of Mathematics,
%University of Silesia, Bankowa 14, 40-007 Katowice, POLAND\\ \email{mtyran@us.edu.pl} \and Hans-Otto Walther
%        \at Mathematisches Institut, Universit{\"{a}}t Giessen, Arndtstrasse 2, 35392 Giessen, GERMANY\\
%        \email{Hans-Otto.Walther@math.uni-giessen.de}}

\maketitle

\begin{abstract}Here we analytically examine the response of a limit cycle solution to a simple differential delay equation to a single pulse perturbation of the piecewise linear nonlinearity.  We construct the unperturbed limit cycle analytically, and are able to completely characterize the perturbed response to a pulse of positive amplitude and duration with onset at different points in the limit cycle.  We determine the perturbed minima and maxima and period of the limit cycle and show how the pulse modifies these from the unperturbed case.
   \end{abstract}
%\begin{keywords}
%delay differential equation, perturbation, phase resetting, phase locking, synchronization
%\end{keywords}

%\begin{AMS}
%34K05, 34K11, 34K13, 34K18, 34K26, 34K27
%\end{AMS}
%

\tableofcontents

\section{Introduction}\label{sec:intro}

Mammalian hematopoietic systems have complex and complicated regulatory processes that control the production of red blood cells, white blood cells and platelets.  However, boiled down to their essence, each is a negative feedback system with a time delay that is controlling the production of primitive cells entering from the hematopoietic stem cell compartment.

Often the numbers of circulating blood cells will display oscillations that are more or less regular.  This may occur \citep{foley-mackey-2009} because of the existence of a spontaneously occurring disorder like cyclical neutropenia \citep{dale88,haurie98a,Colijn2007,dalemackey2015}, periodic thrombocytopenia \citep{apostu2008,Swinburne2000}, periodic leukemia \citep{pcml,fortin}, or periodic autoimmune hemolytic anemia \citep{Mac3,jgmmcm89}.  Or, it may occur because of the intrusive administration of chemotherapy in a periodic fashion \citep{krinner2013} which has the unfortunate side effect of killing both malignant and normal cells.

In either case (spontaneously occurring oscillations due to disease or induced oscillations due to the side effects of chemotherapy) a clinical intervention  often consists of trying to administer a recombinant cytokine of the appropriate type to alleviate the more serious symptoms of the oscillation.  In the case of cyclical neutropenia this is granulocyte colony stimulating factor (G-CSF) \citep{dale88,dale93,Dale2003}, and the same is true during chemotherapy induced neutropenia \citep{bennett1999,clark2005}. G-CSF has among its effects the ability to interfere with apoptosis (pre-programmed death) of cells \citep{Colijn2007,colijn-foley-mcm-2007}, be this cell death naturally present or induced by an external agent like chemotherapy. Unfortunately  the issue of when to administer this (or other) cytokines is hotly debated and this is, without a doubt, because the cytokines in question have an effect on the dynamics of the affected system many days before the desired (or undesired) effect is manifested in the peripheral blood.

It has been noted that the timing of the administration  of G-CSF can have profound consequences on the neutropenia.  Given at some points in the cycle it can dramatically reduce the neutropenia (increasing the nadir of the cycle) while at other times it can actually make the neutropenia worse by deepening the nadir \citep{aapro2011,barni2014,langa2012,palumbo2012}.  Thus, from a mathematical perspective the problem is simply ``How and when do we deliver a perturbation to a delayed dynamical system in order to achieve some desired objective?".  %This paper is directed to a partial investigation of this problem.

The problem outlined above can, from a mathematical point of view, be viewed within the context of `phase resetting of an oscillator' and as such has received widespread attention especially within the biological community.  This field has a large and varied literature (see \citet{clocks} for an elementary introduction and \citet{winfree80} for an exhaustive treatment of the subject from a historical perspective) which is almost exclusively devoted to the interaction of oscillatory systems in a finite dimensional space (i.e. limit cycles in ordinary differential equations) with a single perturbation or periodic  perturbation.  Surprisingly, however, there is little that has been done on such interactions when the limit cycle is in an infinite dimensional phase space (e.g a differential delay equation).  There are, however, a few authors who have considered such situations. % but only for a periodically applied perturbation and not the case of a single perturbation studied here.

For example, \citet{forys2013,bodnar2013a,bodnar2014} and \citet{forys2014} studied  simple models of tumour growth where the delayed model equation has an additional term describing an external influence and reflecting a treatment.  There have been a number of both experimental and theoretical papers \citep{israelsson1967,johnsson1968,johnsson1971,andersen1972a,andersen1972b} devoted to the autonomous growth of the tip of {\it Helianthus annuus} which describes a variety of patterns as a function of time and which is thought to involve a delay between the sensing of a gravitational stimulus and the bending of the plant (c.f \citet{israelsson1967} for a very nice historical review of this problem).
   Another class of problems involving delayed dynamics is related to pulse coupled oscillators which have been treated recently by \citet{canavier2010,klinshov2011,klinshov2015}.
\citet{bard2012} and \citet{novicenko2012} have developed phase reduction methods appropriate for delayed dynamics.
 Finally we should note the recent numerical work on several gene regulatory circuite by \citet{lewis2003} and \citet{horikawa2006} for the segmentation clock in zebrafish as well as the work of \citet{doi2010} on circadian regulation of G-protein signaling.  However, none of these papers have addressed the problem that we study here from an analytic point of view. This paper offers a partial study of the problem.

The regulation of the production of blood cells, denoted by $x(t)$ (and typically measured in units of cells/$\mu$L of blood or alternately in units of cells/kG body weight), reduced to the barest of descriptions, can be described most simply by a differential delay equation of the form
\begin{equation}
\dfrac{dx}{dt}(t) = -\gamma x(t) + f(x(t-\tau)) \qquad \mbox{with constant} \,\, \gamma > 0,
\label{e:gen-dde}
\end{equation}
in which $f:\mathbb{R}\to\mathbb{R}$ is monotone decreasing such that $\xi_1 \leq \xi_2$ implies that $f(\xi_1) \geq f(\xi_2)$. In Equation \eqref{e:gen-dde} we must also specify an initial function $\varphi: [-\tau,0]\to\mathbb{R}$, in order to obtain a solution.  Here we replace the nonlinearity $f$ with a piecewise constant function. This permits us to compute solutions explicitly, so we may analytically study their behaviour, and the response of the solutions to perturbation meant to represent the effect of cytokine administration.

This generic model captures the essence, if not the subtleties, of peripheral blood production.  The monotone nature of $f$ is mediated via the effects of the important regulatory cytokines, e.g.  G-CSF for the white blood cells, erythropoietin for the red blood cells, and thrombopoietin for the platelets.  The administration of exogenous cytokines in an attempt to control the dynamics of \eqref{e:gen-dde} will typically have an effect that be interpreted as increasing $f$ over some portion of time, and the goal of this paper is to study the effect of such a perturbation on the solution of  \eqref{e:gen-dde}.

This paper is organized as follows.  Section \ref{sec:form} describes the model and formulates it in a mathematically convenient form. Section \ref{sec:global} provides basic facts about continuous, piecewise differentiable solutions. On a state space of simple initial functions these solutions yield a continuous semiflow. There is a periodic solution whose orbit in state space is stable with strong attraction properties. Section \ref{sec:pulse} introduces the  pulse-like perturbations (a perturbation of constant amplitude $a$ lasting for a finite period of time $\sigma$) of the model which correspond to the effect of medication in the sense that during a finite time interval the production of blood cells is increased. It is shown that the response of the system to such perturbations is continuous provided the latter are not too large. This includes a continuity result for the {\it cycle length map}, which assigns to each onset time of increased production a time of return to the periodic orbit, after the end of increased production.
The bulk of our results are presented in Section \ref{sec:pert} where we examine the effect of cytokine perturbation
when the perturbation away from the stable periodic orbit begins at different points in the cycle.  In particular we look at phase resetting properties of the system - in terms of the cycle length map - and at the minima and maxima compared to the amplitudes of the periodic solution.  Section \ref{sec:prop-reset} examines the various forms assumed by the cycle length map for different values of the parameters. Section \ref{sec:therapy} shows how the results of the previous sections may be potentially used to tailor  therapy to achieve certain results.  The paper concludes with a brief discussion in Section \ref{sec:disc}.  There we consider a simple extension in which a pulse-like perturbation may decrease the nadir of the limit cycle as is noted clinically.   The proofs of many of the results are given in the two appendices.

\section{The model}\label{sec:form}

\subsection{Scalar delay differential equations}

Consider the  delay differential equation \eqref{e:gen-dde}.
If $f$ is continuous and monotone decreasing then
there is a unique constant solution $t\mapsto x_*$ given by $f(x_*) = \gamma x_*$. If in addition $f$ is, say, continuously differentiable then this constant solution may be stable or not, depending on $\gamma$ and $f'(x_*)$.
In case $t\mapsto x_*$ is {\it linearly unstable}, that is, the {\it linearized equation}
$$
y'(t)=-\gamma y(t)+f'(x_*)y(t-\tau)
$$
is unstable then also  $t\mapsto x_*$ is unstable as a solution of \eqref{e:gen-dde}. In case $f$ also satisfies a one-sided boundedness condition there exists a periodic solution which is slowly oscillating in the sense that the intersections with the equilibrium level $\xi=x_*$ are spaced at distances larger than the delay $\tau$, and the minimal period is given by three consecutive such intersections. In general slowly oscillating periodic solutions are not unique in the sense that they are not all translates of each other. Depending on $\gamma$ and $f'(x_*)$ there may also exist {\it rapidly oscillating} periodic solutions about $\xi=x_*$. These have all consecutive zeros spaced at distances strictly less than the delay $\tau$. In case $f'(\xi)<0$ for all $\xi\in\mathbb{R}$ every rapidly oscillating periodic solution is unstable. For details and for more about \eqref{e:gen-dde}, see e.g. the recent survey by \cite{W}   and the references given there.

\subsection{A piecewise constant approximation of the nonlinearity}\label{ssec:p-linear}

To obtain solutions which can be computed in terms of elementary functions consider the situation in which the function $f$ is piecewise constant and given by
\begin{equation}
f(x) =
\left\{\begin{array}{ll}  b_L  & \qquad \mbox{for} \quad x  <
    \theta  \\
    b_U   & \qquad \mbox{for} \quad x  \geq \theta,
    \end{array}
    \right.
    \quad 0<\theta,\quad 0 < b_U <   b_L.
    \label{e:pcnl}
\end{equation}
%We exclude the special case $b_U=\gamma\theta$, in order to facilitate the construction of the solution semi-flow in Section 3 below, see the dashed line in Figure \ref{fig:figf-1}.
We exclude the special cases $b_L=\gamma \theta$ and $b_U=\gamma\theta$, in order to facilitate the construction of the solution semi-flow in Section 3 below, see the skewed dashed lines in Figure \ref{fig:figf-1} and  Remark~\ref{rem:semi}.

\begin{figure}[htb]
\centering

\includegraphics{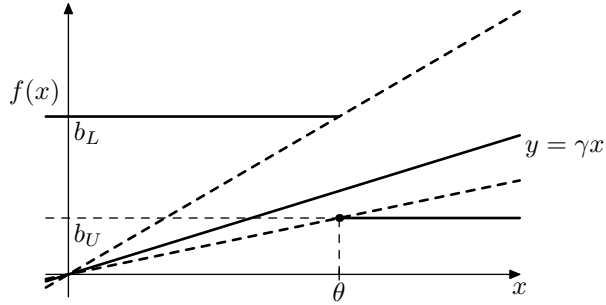}
\caption{The graph of the function $f$ as given in \eqref{e:pcnl}.  The skewed solid line is the graph of $\gamma x$ in general, while the skewed dashed lines are the graphs of $\gamma x$ in the special excluded cases that $b_L=\gamma \theta$ and  $b_U = \gamma \theta$.}
\label{fig:figf-1}
\end{figure}

Then we obtain the delay differential  equation
\begin{equation}
    \dfrac{dx}{dt}(t) = - \gamma x(t) + \left\{\begin{array}{ll}  b_L  & \qquad \mbox{for} \quad x(t-\tau) <
    \theta  \\
    b_U   & \qquad \mbox{for} \quad x(t-\tau) \geq \theta,
    \end{array}
    \right.
      \label{e:pl-sys}
    \end{equation}
with $\gamma>0,\theta>0,b_L>b_U>0,\gamma\theta\neq b_U,b_L$
whose solutions satisfy linear, inhomogeneous ordinary differential equations in  intervals on which their delayed values remain either below or above the level $\xi=\theta$, resulting in increasing and decreasing exponentials on such intervals. In the situation illustrated in Figure~\ref{fig:figf-1} when the graph of $x\mapsto \gamma x$  passes through the gap of the nonlinearity in Eq.~\eqref{e:pl-sys}, so that $b_U<\gamma\theta<b_L$, there is no steady state of Eq.~\eqref{e:pl-sys}. Otherwise, if either $\gamma\theta <b_U$ or $b_L<\gamma\theta$ then $x(t)=b_U/\gamma$ or $x(t)=b_L/\gamma$ is the steady state of Eq.~\eqref{e:pl-sys}.

Note that in \eqref{e:pl-sys} there are five parameters $(\gamma, b_U, b_L, \theta, \tau)$ and we can reduce these to three by a change of variables, since with
\begin{equation}
\label{eq:var}
\left\{
\begin{array}{rcl}
 \hat{x}(t)& =& x(t/\gamma)-\theta \\
 \hat{\tau}&= &\gamma \tau \\
 \beta_L &=& -\theta +b_L/\gamma \\
 \beta_U &=& \theta -b_U /\gamma
\end{array}\right.
\end{equation}
we can rewrite \eqref{e:pl-sys} in the form
\begin{equation}
    \dfrac{d\hat{x}}{dt}(t) =  \left\{\begin{array}{ll}-  \hat{x}(t)  + \beta_L  & \quad \mbox{for} \quad \hat{x}(t-\hat{\tau}) <
    0 \\
    -  \hat{x}(t) -\beta _U   & \quad \mbox{for} \quad \hat{x}(t-\hat{\tau}) \geq 0,
    \end{array}
    \right.
       \label{e:pl-sys-dimen}
    \end{equation}
where $\beta_L+\beta_U>0$.

Now we have only a three parameter $(\beta_U, \beta_L, \tau)$ system to consider, having  reduced \eqref{e:pl-sys} to
 \begin{equation}
    \dfrac{dx}{dt}(t) =  -  x(t)  + f(x(t-\tau))
    \label{e:pl-sys-dimen1}
    \end{equation}
where the function $f$ is of the form
\begin{equation}
f(\xi) =
\left\{\begin{array}{ll}  \beta_L  & \qquad \mbox{for} \quad \xi  <
    0  \\
    -\beta_U   & \qquad \mbox{for} \quad \xi  \geq 0,
    \end{array}
    \right.
    \quad -\beta_U<\beta_L, \quad \beta_L,\beta_U\neq 0. %\quad 0<\beta_L.
    \label{e:pcnl1}
\end{equation}
The discontinuity of $f$ requires a moment of reflection about what a solution of Eq. \eqref{e:pl-sys-dimen1} should be - certainly not a continuous function $x:[-\tau,\infty)\to\mathbb{R}$ which is differentiable and satisfies Eq.~\eqref{e:pl-sys-dimen1} for all $t>0$, as it is familiar from delay differential equations with a functional on the right hand side which is at least continuous. We shall come back to this in Section 3.

\medskip

\section{The semiflow of the unperturbed system}\label{sec:global}

For continuity properties, e.g., continuity of the reset map which is to be introduced in Section~\ref{sec:pulse}, we need to develop a framework.

Consider the initial value problem of Eq.~\eqref{e:pl-sys-dimen1} for $t>0$, with initial condition $x(t)=\phi(t)$ for $-\tau\le t\le0$, and where the function $\phi:[-\tau,0]\to\mathbb{R}$ is continuous and has at most a finite number of zeros.
Let $C=C([-\tau,0],\mathbb{R})$ denote the Banach space of continuous functions $[-\tau,0]\to\mathbb{R}$ equipped with the maximum-norm, $|\phi|_C=\max_{-\tau\le t\le0}|\phi(t)|$, and set
$$
Z=\{\phi\in C:\phi^{-1}(0)\,\,\text{finite or empty}\}.
$$
For initial data $\phi\in Z$ we construct continuous solutions of Eq. \eqref{e:pl-sys-dimen1} by means of the variation-of-constants formula as follows. Suppose $z_1<z_2<\ldots<z_J$ are the zeros of $\phi$ in $(-\tau,0)$. On $(0,z_1+\tau]$ we define
$$
x(t)=e^{-t}\phi(0)+\int_0^te^{-(t-s)}f(x(t-s))ds
$$
or,
\begin{equation}
x(t)=e^{-t}x(0)-\beta_U(1-e^{-t})=-\beta_U+(x(0)+\beta_U)e^{-t}
\label{e:3.8}
\end{equation}
in case $0<\phi(v)$  in $(-\tau,z_1)$,
\begin{equation}
x(t)= e^{-t}x(0)+\beta_L(1-e^{-t})=\beta_L+(x(0)-\beta_L)e^{-t}
\label{e:3.9}
\end{equation}
in case $\phi(v)<0$ in $(-\tau,z_1)$.
Notice that on the interval $[0,z_1+\tau]$ the solution $x$ is either constant with value $-\beta_U\neq0$ or $\beta_L\neq 0$, or strictly monotone. We conclude that $x$ has at most one zero in $(0,z_1+\tau)$, and if there is a zero then $x$ changes sign at the zero. Let us call such zeros {\it transversal}. In case $\phi$ has no zero in $(-\tau,0)$ we define $x(t)$ analogously, for $0<t\le\tau$.
The procedure just described can be iterated and yields a continuous function $x\colon [-\tau,
\infty)\to\mathbb{R}$ which we define to be the solution of the initial value problem above.

Notice that all {\it segments}, or {\it histories}, $x_t$ given by
$$
x_t(s)=x(t+s)\,\,\text{for}\,\,t\ge0\,\,\text{and}\,\,-\tau\le s\le0,
$$
belong to the set $Z$. We assume that $\beta_L,\beta_U\neq0$  and we write $x^{\phi}$ instead of $x$ when convenient, and define the semiflow $S\colon [0,\infty)\times Z\to Z$ of Eq. \eqref{e:pl-sys-dimen1} by
$$
S(t,\phi)=x^{\phi}_t.
$$
The proof of the following result is given in the appendix.
\begin{proposition}\label{prop:prop3.1}
The semiflow $S$ is continuous.
\end{proposition}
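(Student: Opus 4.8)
The plan is to deduce joint continuity of $S$ from continuity in the initial datum alone, uniformly on compact time intervals, and — since $f$ is discontinuous — to control the solution difference by a \emph{measure} estimate rather than by a Lipschitz/Gronwall argument. Writing $x_0:=x^{\phi_0}$ and splitting
\[
|S(t,\phi)-S(t_0,\phi_0)|_C\le |x^\phi_t-x^{\phi_0}_t|_C+|x^{\phi_0}_t-x^{\phi_0}_{t_0}|_C,
\]
the second term tends to $0$ as $t\to t_0$ because $x^{\phi_0}$ is piecewise exponential, hence Lipschitz on compact intervals (from $|(x^{\phi_0})'|\le|x^{\phi_0}|+\max\{\beta_L,\beta_U\}$), so $t\mapsto x^{\phi_0}_t$ is continuous. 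The first term is at most $\sup_{-\tau\le u\le t}|x^\phi(u)-x^{\phi_0}(u)|$. Thus everything reduces to the claim that, for every $T>0$, $\sup_{-\tau\le u\le T}|x^\phi(u)-x^{\phi_0}(u)|\to0$ as $\phi\to\phi_0$ in $Z$.

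For this core claim I would start from the global variation-of-constants representation $x^\phi(t)=e^{-t}\phi(0)+\int_0^t e^{-(t-s)}f(x^\phi(s-\tau))\,ds$, which gives
\[
x^\phi(t)-x_0(t)=e^{-t}(\phi(0)-\phi_0(0))+\int_0^t e^{-(t-s)}\big(f(x^\phi(s-\tau))-f(x_0(s-\tau))\big)\,ds.
\]
Since $f$ takes only the two values $\beta_L$ and $-\beta_U$, the integrand vanishes except on the \emph{sign-switch set} $E_\phi=\{s:\operatorname{sign}x^\phi(s-\tau)\ne\operatorname{sign}x_0(s-\tau)\}$, where it is bounded by $\beta_L+\beta_U$; moreover on $E_\phi$ the two arguments lie on opposite sides of $0$, so $|x_0(s-\tau)|\le|x^\phi(s-\tau)-x_0(s-\tau)|$. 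Hence the integral is bounded by $(\beta_L+\beta_U)\,|E_\phi\cap[0,t]|$, and the whole difference is controlled once $|E_\phi|\to0$.

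The main lemma supplying this has two ingredients. First, $x_0$ has only finitely many zeros in $[-\tau,T]$: on $[-\tau,0]$ these are the zeros of $\phi_0$, finite because $\phi_0\in Z$; between consecutive switching times $x_0$ solves $x'=-x\pm\beta$ and is either a nonzero constant or a strictly monotone exponential, in either case with at most one zero; and the switching times in $(k\tau,(k+1)\tau]$ are $\tau$-translates of the finitely many zeros of $x_0$ in $((k-1)\tau,k\tau]$, so by induction over the intervals $[-\tau,k\tau]$ there are finitely many pieces, hence finitely many zeros. (Here I use that $x_0$ can never vanish on an interval, since $f$ never equals $0$ — exactly the point of excluding $\beta_L=0$ and $\beta_U=0$.) Second, for a continuous $w$ with finitely many zeros on a compact interval, $|\{|w|\le\varepsilon\}|\to0$ as $\varepsilon\downarrow0$, since these closed sets decrease to the finite zero set. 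I then close the loop by a parallel induction on $k$: assuming $\delta_k(\phi):=\sup_{[-\tau,k\tau]}|x^\phi-x_0|\to0$, for $s\in[0,(k+1)\tau]$ the delayed argument lies in $[-\tau,k\tau]$, so $E_\phi\cap[0,(k+1)\tau]\subseteq\{u\in[-\tau,k\tau]:|x_0(u)|\le\delta_k(\phi)\}$, whose measure tends to $0$ by the two ingredients; the estimate above then yields $\delta_{k+1}(\phi)\to0$, with base case $x^\phi=\phi\to\phi_0=x_0$ on $[-\tau,0]$.

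I expect the discontinuity of $f$ at the origin to be the main obstacle, since it blocks the classical Lipschitz route and forces the argument through $|E_\phi|$. The delicate feature is that a non-transversal zero of $\phi_0$ may unfold, under perturbation, into extra zeros or none at all, and likewise a degenerate zero of $x_0$ (where a switching coincides with a zero of the solution) may appear or disappear; the measure estimate is robust to all such events precisely because they only alter the sign of the delayed value on sets of arbitrarily small Lebesgue measure, leaving the integral — and hence the solution — continuous.
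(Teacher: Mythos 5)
Your proposal is correct and follows essentially the same route as the paper's proof: the joint continuity is reduced, via the same triangle-inequality splitting, to continuity in the initial datum uniformly on compact time intervals, and that in turn is proved by the variation-of-constants formula together with a Lebesgue-measure bound on the set where the two delayed arguments lie on opposite sides of $0$ (where alone the piecewise-constant $f$ can differ), advanced in steps of length $\tau$. The only difference is packaging: the paper proves continuity of the time-$\tau$-map $Z\ni\phi\mapsto x^{\phi}_{\tau}\in Z$ and iterates it by composition, relying on the already-established fact that segments stay in $Z$, whereas you run an explicit induction on $[-\tau,k\tau]$ with the errors $\delta_k(\phi)$ and therefore re-derive, as a standalone lemma, that $x^{\phi_0}$ has finitely many zeros on compact intervals — a fact the paper gets for free from its construction of the semiflow.
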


\begin{remark}\label{rem:semi}
Incidentally, let us see what goes wrong in the excluded cases $\beta_U=0$ and $\beta_L=0$. If $\beta_U=0$ then for each $\phi\in Z$ with $\phi(t)>0$ on $[-\tau,0)$ and $\phi(0)=0$ the formula (\ref{e:3.8}) - or, the equation $x'=-x$ - yields $x(t)=0$ for all $t\ge0$, and $Z$ is not positively invariant. On a set $\tilde{Z}\subset C$ of initial data which contains $0\in C$ and negative data $\psi$ with arbitrarily small norm continuous dependence on initial data would be violated for $\beta_U=0$ because
(\ref{e:3.9}) yields
\begin{align*}
|x(\tau)-0| & =  |x(\tau)|=|\beta_L+(x(0)-\beta_L)e^{-\tau}|\\
& \ge  \beta_L(1-e^{-\tau})-\psi(0)e^{-\tau}\ge
\dfrac{1}{2}\beta_L(1-e^{-\tau})
\end{align*}
for the solution starting from negative and sufficiently small $\psi\in\tilde{Z}$.

If $\beta_L=0$ then for each $\phi\in Z$ with $\phi(t)<0$ on $[-\tau,0)$ and $\phi(0)=0$ we have $x(t)=0$ for $t\in [0,\tau]$, which implies that $Z$ is not positively invariant in this case as well.
\end{remark}

For later use we show next that transversal zeros depend continuously on the initial data $\phi\in Z$.

\begin{proposition}\label{prop:3.2}
For $\phi\in Z$ and $z>0$ with $x^{\phi}(z)=0\neq x^{\phi}(z-\tau)$ and $\epsilon>0$, there exists $\delta>0$ such that for each $\psi\in Z$ with $|\psi-\phi|_C<\delta$ there is $z'\in(z-\epsilon,z+\epsilon)$ with
$x^{\psi}(z')=0$. Moreover, $x^{\psi}(z'-\tau)\neq 0$.
\end{proposition}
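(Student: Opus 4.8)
The crucial feature to exploit is that the hypothesis $x^\phi(z-\tau)\neq0$ makes the zero at $z$ transversal in the strong sense of a sign change with nonzero slope. First I would use continuity of $x^\phi$ to fix $\rho\in(0,\epsilon)$ with $\rho<z$ so that $x^\phi$ keeps a constant nonzero sign on $[z-\tau-\rho,\,z-\tau+\rho]$. On the interval $[z-\rho,z+\rho]$ the delayed argument $x^\phi(t-\tau)$ then keeps its sign, so $f(x^\phi(t-\tau))$ equals the constant $\beta_L$ or $-\beta_U$, and $x^\phi$ solves the linear autonomous equation underlying \eqref{e:3.8}--\eqref{e:3.9}. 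Since $x^\phi(z)=0$ differs from the corresponding equilibrium ($-\beta_U\neq0$ or $\beta_L\neq0$), the explicit formula shows $x^\phi$ is strictly monotone on $[z-\rho,z+\rho]$; hence $x^\phi(z-\rho)$ and $x^\phi(z+\rho)$ are nonzero and of opposite sign. Set $m=\min\{|x^\phi(z-\rho)|,\,|x^\phi(z+\rho)|\}>0$.

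Next I would invoke continuity of the semiflow, Proposition~\ref{prop:prop3.1}. Since evaluation at $0$ is continuous on $C$, the map $(t,\psi)\mapsto x^\psi(t)$ is continuous, and in particular $x^\psi(s)\to x^\phi(s)$ uniformly for $s$ in any compact interval as $\psi\to\phi$ in $Z$. Choosing $\delta_1>0$ so that $|\psi-\phi|_C<\delta_1$ forces $|x^\psi(z\pm\rho)-x^\phi(z\pm\rho)|<m$, the values $x^\psi(z-\rho)$ and $x^\psi(z+\rho)$ keep the opposite signs of $x^\phi(z-\rho)$ and $x^\phi(z+\rho)$. As $x^\psi$ is continuous, the intermediate value theorem yields $z'\in(z-\rho,z+\rho)\subset(z-\epsilon,z+\epsilon)$ with $x^\psi(z')=0$, and $z'>0$ because $\rho<z$.

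For the final assertion I would control $x^\psi$ near $z-\tau$. Put $m'=\min_{s\in[z-\tau-\rho,\,z-\tau+\rho]}|x^\phi(s)|$, which is positive by the choice of $\rho$. Using uniform convergence on this compact interval — and, on the part of it lying in $[-\tau,0]$, the elementary bound $|x^\psi(s)-x^\phi(s)|=|\psi(s)-\phi(s)|\le|\psi-\phi|_C$ — I would pick $\delta_2>0$ so that $|\psi-\phi|_C<\delta_2$ implies $|x^\psi(s)-x^\phi(s)|<m'$ throughout the interval, whence $x^\psi(s)\neq0$ there; since $z'-\tau\in(z-\tau-\rho,\,z-\tau+\rho)$, this gives $x^\psi(z'-\tau)\neq0$. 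Taking $\delta=\min\{\delta_1,\delta_2\}$ finishes the proof. The conceptual content lies entirely in the transversality of the first step; the only real nuisance is the bookkeeping here when $[z-\tau-\rho,\,z-\tau+\rho]$ straddles $0$, so that $x^\psi$ must be estimated both by the $C$-norm on $[-\tau,0]$ and by semiflow continuity on $[0,\infty)$, which splitting the interval at $0$ handles cleanly.
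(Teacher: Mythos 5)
Your proof is correct and follows essentially the same route as the paper's: isolate an interval around $z-\tau$ on which $x^{\phi}$ has constant nonzero sign, deduce from the explicit formulas \eqref{e:3.8}--\eqref{e:3.9} that $x^{\phi}$ is strictly monotone near $z$ (hence changes sign with nonzero values at the endpoints), and then use continuous dependence on initial data to transfer both the sign change near $z$ and the nonvanishing near $z-\tau$ to $x^{\psi}$, finishing with the intermediate value theorem. Your additional bookkeeping (the constants $m$, $m'$, the requirement $\rho<z$, and splitting the delayed interval at $0$) only makes explicit what the paper's proof leaves implicit.
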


\begin{proof} By continuity there exists $\eta\in(0,\epsilon)$ so that $x^{\phi}(s)\neq0$ on $[z-\tau-\eta,z-\tau+\eta]\cap[-\tau,\infty)$. Using (\ref{e:3.8}) and (\ref{e:3.9}) we infer that on $[z-\eta,z+\eta]$ the solution $x^{\phi}$ either equals a nonzero constant or is strictly monotone. As $x^{\phi}(z)=0$ the solution $x^{\phi}$ must be strictly monotone on $[z-\eta,z+\eta]$, with $\mathrm{sign}(x^\phi(z-\eta))\neq \mathrm{sign}(x^{\phi}(z+\eta))\neq0$. By continuous dependence on initial data, there exists $\delta>0$ such that for each $\psi\in Z$ with $|\psi-\phi|_C<\delta$ we have
$$
x^{\psi}(t)\neq0\,\,\text{on}\,\,[z-\tau-\eta,z-\tau+\eta]\cap[-\tau,\infty)
$$
and
$$
\mathrm{sign} (x^{\psi}(z-\eta))=\mathrm{sign} (x^{\phi}(z-\eta)),\,\,\mathrm{sign} (x^{\psi}(z+\eta))=\mathrm{sign} (x^{\phi}(z+\eta)).
$$
Hence $x^{\psi}$ changes sign in $[z-\eta,z+\eta]$, so it has a zero $z'$ in this interval.
\end{proof}

The condition $\beta_L<0$ ($\beta_U<0$) in the next result means that in the original model given by Eq. \eqref{e:pl-sys}
the positive constant solution given by $\gamma x^{\ast}=f(x^{\ast})$ has its value  $x^{\ast}$ beyond (above) the discontinuity $\theta$ of $f$. In this case one may interpret $x^{\ast}=\theta$ as an equilibrium position for (\ref{e:pl-sys}).

\begin{theorem}
If $\beta_U<0$ or $\beta_L<0$ the equilibrium state of the semi-flow $S$, which is respectively given by $x^\phi(t)=-\beta_U$ or $x^\phi(t)=\beta_L$ with $\phi \in Z$,
is globally asymptotically stable.
\end{theorem}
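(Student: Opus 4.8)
The plan is to handle the two cases by a single symmetric argument; I describe $\beta_U<0$, where the equilibrium is the constant function with value $-\beta_U>0$, and indicate the sign reversal for $\beta_L<0$ at the end. The key observation is that when $\beta_U<0$ \emph{both} values attained by $f$ are strictly positive, since $-\beta_U>0$ and, by the standing hypothesis $-\beta_U<\beta_L$, also $\beta_L>-\beta_U>0$. Thus on every subinterval the solution solves a linear ODE $x'=-x+c$ with target $c\in\{\beta_L,-\beta_U\}$ positive, so it is always driven toward a positive level. This ought to force the solution to become, and remain, positive, after which the delayed argument is eventually positive as well and the discontinuity of $f$ is never felt again.

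First I would derive an a priori bound that is blind to which branch is active. Because $f(x^\phi(t-\tau))\ge-\beta_U$ for every $t$, the solution $x^\phi$ (continuous and piecewise $C^1$, hence absolutely continuous) satisfies $(x^\phi)'(t)\ge-x^\phi(t)-\beta_U$ wherever the derivative exists; multiplying by $e^t$ and integrating yields $x^\phi(t)\ge-\beta_U+(\phi(0)+\beta_U)e^{-t}$ for all $t\ge0$. The right-hand side converges to $-\beta_U>0$, so there is $T_1\ge0$ with $x^\phi(t)>0$ for all $t\ge T_1$.

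Next I would freeze the branch. For $t\ge T_1+\tau$ we have $x^\phi(t-\tau)>0\ge0$, hence $f(x^\phi(t-\tau))=-\beta_U$ and the equation collapses to the single linear ODE $x'=-x-\beta_U$. Solving from $t=T_1+\tau$ gives $x^\phi(t)=-\beta_U+(x^\phi(T_1+\tau)+\beta_U)e^{-(t-T_1-\tau)}$, which tends monotonically and exponentially to $-\beta_U$. Passing to segments, for $t\ge T_1+2\tau$ the whole window $[t-\tau,t]$ lies in this convergence regime, so $|x^\phi_t-(-\beta_U)|_C=|x^\phi(t-\tau)+\beta_U|\to0$; this is global attraction to the equilibrium. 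For Lyapunov stability I would exploit that the equilibrium value is bounded away from the discontinuity: given $\epsilon>0$ put $\delta=\min\{\epsilon,-\beta_U\}$, so that $|\phi-(-\beta_U)|_C<\delta$ forces $\phi>0$ on $[-\tau,0]$. Then $\phi$ has no zeros, the equation is $x'=-x-\beta_U$ already on $[0,\tau]$, and $|x^\phi(t)+\beta_U|=|\phi(0)+\beta_U|e^{-t}<\delta$; iterating over consecutive intervals of length $\tau$ keeps $x^\phi$ positive and within $\delta\le\epsilon$ of $-\beta_U$ for all $t\ge0$. Global attraction together with this stability gives global asymptotic stability. The case $\beta_L<0$ is the mirror image: now $-\beta_U<\beta_L<0$, both targets are negative, the reversed inequality forces $x^\phi$ eventually negative, the branch freezes to $x'=-x+\beta_L$, and $x^\phi\to\beta_L$.

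The only place where the discontinuity of $f$ genuinely interferes is the freezing step, and the point of the comparison bound is precisely to sidestep a direct study of the (possibly numerous) transversal crossings of the delayed argument through $0$: rather than track them, one shows they cease. I expect the main technical care to be in integrating the differential inequality across the corner points where $x^\phi$ is not differentiable — handled by absolute continuity — and in checking that the sign hypothesis $-\beta_U<\beta_L$ is exactly what makes both branch targets share the sign of the equilibrium, which is what the whole argument rests on.
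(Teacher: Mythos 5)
Your proposal is correct and takes essentially the same route as the paper: both proofs rest on the observation that for $\beta_U<0$ (hence $\beta_L>-\beta_U>0$) both values of $f$ are positive, so every solution eventually becomes positive on a window of length $\tau$, after which the equation freezes to $x'(t)=-x(t)-\beta_U$ and converges exponentially to $-\beta_U$, while stability comes from the fact that a small $C$-neighbourhood of the equilibrium consists of positive functions, on which the solution is trapped and monotone. The only real difference is that where the paper merely asserts that ``using \eqref{e:3.8} and \eqref{e:3.9} one can show'' every solution becomes positive, you substantiate that step with the comparison inequality $x^{\phi}(t)\ge-\beta_U+(\phi(0)+\beta_U)e^{-t}$, a uniform and rigorous version of the same fact that avoids tracking the switching between branches.
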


\begin{proof}
In case $\beta_U < 0$ the constant function $\mathbb{R}\ni t\mapsto -\beta_U\in\mathbb{R}$ is a positive solution.
Notice that due to (\ref{e:3.8}) every solution $x^{\phi}$ with $0<\phi(t)$ on $[-\tau,0]$ has its values $x^{\phi}(t)$, $t\ge0$, between $\phi(0)$ and $-\beta_U$ and converges to $-\beta_U$ as $t\to\infty$. This implies local asymptotic stability of the positive steady state. Moreover, using (\ref{e:3.8}) and (\ref{e:3.9}) one can show that every solution $x=x^{\phi}$, $\phi\in Z$, becomes positive on some interval $[\tilde{T}-\tau,\tilde{T}]$, $\tilde{T}\ge0$, and is  given by
$$
x(t)=-\beta_U+(x(\tilde{T})+\beta_U)e^{-(t-\tilde{T})}
$$
for $t\ge \tilde{T}$, so it tends to $-\beta_U$ as $t\to\infty$. The proof in case $\beta_L<0$ is analogous.  \end{proof}

The situation becomes more interesting when  $\beta_L,\beta_U >0$ which gives $b_U<\gamma\theta<b_L$ for the original parameters, so that the model equation (\ref{e:pl-sys}) does not have a steady state, being a constant function.  
We will show in the next two theorems that in this case there exists a periodic solution and that it is stable, when the semiflow is restricted to
the smaller set $Z_0\subset Z$  of all $\phi\in Z$ which have at most one zero and change sign at this zero $z$  in case $-\tau<z<0$.

We first show that $Z_0$ is positively invariant for the semiflow $S$, i.e., $S(t,\phi)\in Z_0$ for all $t\ge 0$ and $\phi\in Z_0$.  Suppose that $\beta_L>0$ and $\beta_U>0$.  For $\phi\in Z_0$ and $x=x^{\phi}$
we make the following observations. If $\phi(z)=0$ and $-\tau<z<0$, and $\phi(t)<0$ for $-\tau\le t<z$ and $0<\phi(t)$ for $z<t\le0$ then by (\ref{e:3.9}), $0<x(t)$ on $(z,z+\tau]=(z,0]\cup(0,z+\tau]$. In particular, $x_{z+\tau}\in Z_0$; moreover, $x_t\in Z_0$ for all $t\in[0,z+\tau]$. Using (\ref{e:3.8}) there is a smallest $z'$ in $(0,\infty)$ with $x(z')=0$, $x(t)\neq 0$ on $[z'-\tau,z')$, and $x$ changes sign at $t=z'$, and we can iterate.
Thus we obtain $S(t,\phi)\in Z_0$ for all $t\ge0$. The same holds for arbitrary $\phi\in Z_0$.

Note that the zeros of $x^{\phi}$, $\phi\in Z_0$ arbitrary, in $(0,\infty)$ are all transversal and form a strictly increasing sequence of times $z_j=z_j(\phi)$, $j\in\mathbb{N}$, with
$$
z_j+\tau<z_{j+1}\,\,\text{and}\,\,x(z_j-\tau)\neq0\,\,\text{for all}\,\,j\in\mathbb{N}.
$$
From Proposition~\ref{prop:3.2} we conclude the following concerning solutions $x\colon[-\tau,\infty)\to\mathbb{R}$ starting from initial data $x_0=\phi\in Z_0$.

\begin{corollary}
Let $\beta_L,\beta_U>0$. Then each map
$$
Z_0\ni\phi\mapsto z_j(\phi)\in(0,\infty),\quad j\in\mathbb{N},
$$
is continuous at every point $\phi\in Z_0$ with $\phi(0)\neq0$. 
\end{corollary}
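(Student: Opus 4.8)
The plan is to prove continuity of each $z_j$ at a fixed $\phi\in Z_0$ with $\phi(0)\neq0$ by combining the \emph{local} existence of a nearby zero (Proposition~\ref{prop:3.2}) with two \emph{global} ingredients that pin down the index of that zero. To handle $z_j$ it is convenient to fix an arbitrary $N$ and control $z_1,\dots,z_N$ simultaneously. Write $z_k=z_k(\phi)$ and $x=x^\phi$, and let $\epsilon>0$ be given, shrunk so that $\epsilon<\tau$, the open intervals $I_k=(z_k-\epsilon/2,z_k+\epsilon/2)$, $k=1,\dots,N$, are pairwise disjoint, and $I_1\subset(0,\infty)$. Set $T=z_N+\epsilon$, and note that the next zero of $x$ satisfies $z_{N+1}>z_N+\tau>T$, so $z_1,\dots,z_N$ are the only zeros of $x$ in $(0,T]$.

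First I would record that, by continuity of the semiflow (Proposition~\ref{prop:prop3.1}) together with compactness of $[0,T]$, solutions depend uniformly on initial data: $x^\psi\to x$ uniformly on $[-\tau,T]$ as $\psi\to\phi$. Because $\phi(0)\neq0$ and $z_1,\dots,z_N$ are the only zeros of $x$ in $(0,T]$, the quantity $m:=\min\{|x(t)|:t\in[0,T]\setminus\bigcup_k I_k\}$ is strictly positive. Hence there is $\delta_0>0$ such that $|\psi-\phi|_C<\delta_0$ forces $|x^\psi-x|<m$ on $[0,T]$, so that every zero of $x^\psi$ in $(0,T]$ must lie inside $\bigcup_k I_k$. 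This is exactly the step where the hypothesis $\phi(0)\neq0$ is indispensable: it keeps $t=0$ inside the region on which $x$ is bounded away from $0$, preventing a spurious zero of $x^\psi$ from emerging just to the right of $0$.

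Next I would apply Proposition~\ref{prop:3.2} at each $z_k$ (legitimate since $x(z_k)=0\neq x(z_k-\tau)$), obtaining $\delta_k>0$ so that $|\psi-\phi|_C<\delta_k$ guarantees at least one zero of $x^\psi$ in $I_k$. Taking $\delta=\min(\delta_0,\dots,\delta_N)$ and $\psi\in Z_0$ with $|\psi-\phi|_C<\delta$, I would invoke the spacing property already established for elements of $Z_0$—consecutive zeros of $x^\psi$ are more than $\tau$ apart—so that each $I_k$, of length $\epsilon<\tau$, contains \emph{at most} one zero. Combining the three facts (all zeros of $x^\psi$ in $(0,T]$ lie in $\bigcup_k I_k$; at least one lies in each $I_k$; at most one lies in each $I_k$) shows that $x^\psi$ has exactly $N$ zeros in $(0,T]$, one per $I_k$; since the $I_k$ are disjoint and increasing, these are $z_1(\psi)<\dots<z_N(\psi)$ with $z_k(\psi)\in I_k$, whence $|z_k(\psi)-z_k|<\epsilon$ for $k=1,\dots,N$.

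The main obstacle is not the local existence of a nearby zero—Proposition~\ref{prop:3.2} hands that to us—but the bookkeeping that the nearby zero is the \emph{$k$-th} one, i.e.\ ruling out both the disappearance of a zero and the birth of an extra zero under perturbation. The argument resolves this by using uniform convergence to confine all zeros to the neighborhoods $I_k$ and the $>\tau$ separation intrinsic to $Z_0$ to forbid more than one zero per neighborhood, while $\phi(0)\neq0$ excludes the one remaining mechanism for a new zero, namely one nucleating at the left endpoint $t=0$. Since $N$ (hence $j$) and $\epsilon$ were arbitrary, continuity of every $z_j$ at $\phi$ follows.
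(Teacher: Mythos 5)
Your proof is correct and is essentially the paper's intended argument: the paper states the corollary as an immediate consequence of Proposition~\ref{prop:3.2}, and your write-up simply supplies the bookkeeping that derivation presupposes (uniform continuous dependence to confine all zeros of $x^\psi$ to the intervals $I_k$, the $>\tau$ spacing of zeros for $Z_0$ data to forbid two zeros per interval, and $\phi(0)\neq 0$ to rule out a zero nucleating at $t=0$). In particular, your identification of where $\phi(0)\neq 0$ is indispensable matches exactly why the paper restricts the continuity claim to such $\phi$.
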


We now show the existence of a periodic solution.

\begin{theorem}
Let $\beta_L,\beta_U >0$. Then there is a periodic solution $\tilde{x}:\mathbb{R}\to\mathbb{R}$ of Eq. \eqref{e:pl-sys-dimen1} with
$$
\tilde{x}(t)=-\beta_U+\beta_Ue^{-(t+\tau)}\,\,\text{for}\,\,-\tau\le t\le0.
$$
We have $\tilde{x}(-\tau)=0$, and with %$\tilde{z}_j=z_j(\tilde{x}_0)=z_{\ast,j}(\tilde{x}_0)$
$\tilde{z}_j=z_j(\tilde{x}_0)$ for all $j\in\mathbb{N}$,
\begin{align*}
\tilde{x}(t) & =  \beta_L+(\tilde{x}(0)-\beta_L)e^{-t}\qquad\text{for}\quad 0\le t\le\tilde{z}_1+\tau,\\
\tilde{x}(t) & =  -\beta_U+(\tilde{x}(\tilde{z}_1+\tau)+\beta_U)e^{-(t-(\tilde{z}_1+\tau))}\qquad\text{for}\quad\tilde{z}_1+\tau\le t\le\tilde{z}_2+\tau.
\end{align*}
The minimal period of $\tilde{x}$ is
$$
\tilde{T}=\tilde{z}_2+\tau.
$$
\end{theorem}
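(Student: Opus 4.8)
The plan is to construct the candidate $\tilde{x}=x^{\phi}$ explicitly by the variation-of-constants procedure of Section~\ref{sec:global}, starting from $\phi(t)=-\beta_U+\beta_U e^{-(t+\tau)}$ on $[-\tau,0]$, read off the first two zeros $\tilde{z}_1,\tilde{z}_2$ in $(0,\infty)$, and then prove periodicity by showing that the segment $\tilde{x}_{\tilde{z}_2+\tau}$ coincides with $\phi$. First I would check that $\phi\in Z_0$: one computes $\phi(-\tau)=0$ and $\phi'(t)=-\beta_U e^{-(t+\tau)}<0$, so $\phi$ is strictly negative on $(-\tau,0]$ with its only zero at the left endpoint, and $\phi(0)=-\beta_U(1-e^{-\tau})<0$. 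Since $\phi$ solves $x'=-x-\beta_U$, it is the restriction of a falling-regime arc emanating from the zero at $-\tau$; this is exactly the shape we must reproduce at the end of one period.

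Next I would run the semiflow forward. Because $\phi<0$ on $(-\tau,0]$, the delayed value $x(t-\tau)$ is negative for small $t>0$, so on $(0,\tau]$ the solution obeys the rising law \eqref{e:3.9}, $\tilde{x}(t)=\beta_L+(\tilde{x}(0)-\beta_L)e^{-t}$, which increases strictly from $\tilde{x}(0)<0$ toward $\beta_L>0$ and hence has a unique transversal zero $\tilde{z}_1$. The key point is that \eqref{e:3.9} remains valid on all of $(0,\tilde{z}_1+\tau]$: for such $t$ the delayed argument $t-\tau$ lies in $(-\tau,\tilde{z}_1]$, where $\tilde{x}\le 0$ (iterating over sub-intervals of length $\tau$ keeps the delayed value negative up to $\tilde{z}_1$), so $f(\tilde{x}(t-\tau))=\beta_L$ throughout, giving $\tilde{x}(\tilde{z}_1+\tau)>0$. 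At $t=\tilde{z}_1+\tau$ the delayed value reaches $\tilde{x}(\tilde{z}_1)=0$ and turns positive, so the solution switches to the falling law \eqref{e:3.8}, $\tilde{x}(t)=-\beta_U+(\tilde{x}(\tilde{z}_1+\tau)+\beta_U)e^{-(t-(\tilde{z}_1+\tau))}$, decreasing strictly from $\tilde{x}(\tilde{z}_1+\tau)>0$ toward $-\beta_U<0$ and thus having a unique zero $\tilde{z}_2$. Since $\tilde{x}>0$ on $(\tilde{z}_1,\tilde{z}_1+\tau]$ the down-crossing occurs after $\tilde{z}_1+\tau$, i.e.\ $\tilde{z}_2>\tilde{z}_1+\tau$, in agreement with the spacing $z_j+\tau<z_{j+1}$ recorded above; and $\tilde{x}\ge 0$ on $[\tilde{z}_1,\tilde{z}_2]$ certifies that \eqref{e:3.8} holds on all of $[\tilde{z}_1+\tau,\tilde{z}_2+\tau]$. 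This reproduces exactly the three formulas and zeros in the statement.

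The periodicity then follows from uniqueness of the linear arcs. Set $\tilde{T}=\tilde{z}_2+\tau$. On $[\tilde{z}_2,\tilde{T}]\subset(\tilde{z}_1+\tau,\tilde{z}_2+\tau]$ the solution is the falling arc \eqref{e:3.8} with $\tilde{x}(\tilde{z}_2)=0$, so the shifted segment $s\mapsto\tilde{x}(s+\tilde{T})$, $s\in[-\tau,0]$, solves $x'=-x-\beta_U$ with value $0$ at $s=-\tau$; but $\phi$ solves the same equation with the same endpoint value, whence $\tilde{x}_{\tilde{T}}=\phi=\tilde{x}_0$. Extending $\tilde{x}$ to $\mathbb{R}$ by $\tilde{x}(t+\tilde{T})=\tilde{x}(t)$ and invoking uniqueness of the semiflow (Proposition~\ref{prop:prop3.1}), $\tilde{x}$ is a $\tilde{T}$-periodic solution of \eqref{e:pl-sys-dimen1}.

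It remains to show $\tilde{T}$ is the \emph{minimal} period, which I regard as the step needing the most care, since it requires a complete census of the zeros in one period. By construction the zeros of $\tilde{x}$ in $[-\tau,\tilde{T})$ are exactly $-\tau,\ \tilde{z}_1,\ \tilde{z}_2$, with $\tilde{x}<0$ on $(-\tau,\tilde{z}_1)$, $\tilde{x}>0$ on $(\tilde{z}_1,\tilde{z}_2)$, and $\tilde{x}<0$ on $(\tilde{z}_2,\tilde{T})$. A period $p\in(0,\tilde{T})$ would force $\tilde{x}_p=\tilde{x}_0=\phi$, hence $p-\tau$ must be a zero immediately followed by negative values; the only zero in the admissible range $(-\tau,\tilde{z}_2)$ is $\tilde{z}_1$, where $\tilde{x}$ is positive just to the right, so $\tilde{x}_{\tilde{z}_1+\tau}\neq\phi$. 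This excludes every $p\in(0,\tilde{T})$, giving minimal period $\tilde{T}=\tilde{z}_2+\tau$. The only genuine subtlety throughout is tracking the sign of the delayed value to pin down the exact switching times $\tilde{z}_1+\tau$ and $\tilde{z}_2+\tau$; everything else reduces to solving two scalar linear ODEs and comparing endpoint values.
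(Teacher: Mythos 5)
Your proof is correct and takes essentially the same approach as the paper: the paper's own proof consists of the single instruction to compute the solution starting from $\phi(t)=-\beta_U+\beta_U e^{-(t+\tau)}$ via the variation-of-constants construction of Section 3, and your write-up simply carries out that computation in full detail (tracking the sign of the delayed value to locate the switching times, matching the segment at $\tilde{z}_2+\tau$ with $\phi$, and supplying the zero-census argument for minimality of the period, which the paper leaves implicit). One cosmetic point: Proposition 3.1 asserts continuity of the semiflow, not uniqueness---uniqueness of solutions is built into the iterative construction itself---but this does not affect your argument.
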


\begin{figure}[htb]
\centering

\includegraphics{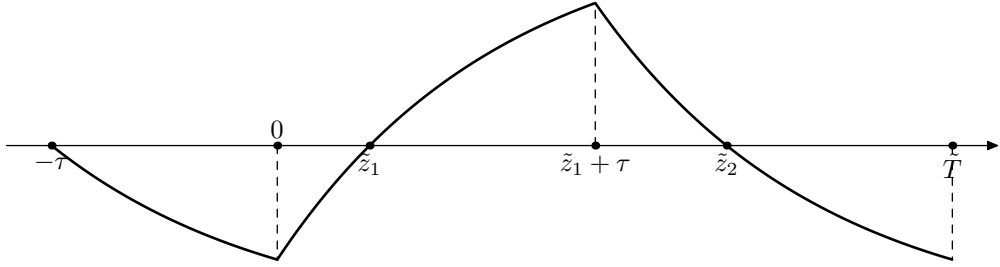}
\caption{A graph of the periodic solution on the interval $[-\tau,\tilde{T}]$}
\label{fig:figf-3}
\end{figure}

\begin{proof}
Compute the solution (see Figure \ref{fig:figf-3}) starting from $\phi\in Z_0$ given by
\[
\phi(t)=-\beta_U+\beta_Ue^{-(t+\tau)}.\qedhere
\]
\end{proof}

It is convenient to set $\tilde{z}_0=-\tau$. %\qquad(=z_{\ast,0}(\tilde{x}_0))$.

\begin{corollary} Let $\beta_L,\beta_U>0$.
The minimum of $\tilde{x}$ is given by
$$
\underline{x}=\tilde{x}(0)= -\beta_U(1- e^{-\tau})<0,
$$
the maximum of $\tilde{x}$ is given by
$$
\overline{x}=\tilde{x}(\tilde{z}_1+\tau)=\beta_L (1- e^{- \tau})>0,
$$
and
$$
\tilde{z}_1=\ln \dfrac{\beta_L -  \underline x }{\beta_L  },
\qquad \tilde{z}_2=\tilde{z}_1+\tau+\ln\dfrac {\overline x + \beta_U}{ \beta_U},
$$
$$
\tilde{T}=\tilde{z}_2+\tau=2\tau+\ln
\left(\dfrac{\beta_L -  \underline x }{\beta_L  } \cdot  \dfrac {\overline x + \beta_U}{ \beta_U}\right) .
$$
\label{cor:max-min-per}
\end{corollary}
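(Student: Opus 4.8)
The plan is to carry out a direct, elementary computation using the three explicit exponential branches of $\tilde{x}$ furnished by the preceding theorem, together with a short monotonicity argument that certifies the extrema occur exactly at the endpoints claimed. First I would record the monotonicity pattern of each branch. On $[-\tau,0]$ the formula $\tilde{x}(t)=-\beta_U+\beta_Ue^{-(t+\tau)}$ is strictly decreasing, so $\tilde{x}$ attains its minimum over this interval at $t=0$; on $[0,\tilde{z}_1+\tau]$ the branch $\beta_L+(\tilde{x}(0)-\beta_L)e^{-t}$ has derivative $(\beta_L-\tilde{x}(0))e^{-t}>0$, since $\tilde{x}(0)<0<\beta_L$, and is therefore strictly increasing; and on $[\tilde{z}_1+\tau,\tilde{z}_2+\tau]$ the branch $-\beta_U+(\tilde{x}(\tilde{z}_1+\tau)+\beta_U)e^{-(t-(\tilde{z}_1+\tau))}$ is strictly decreasing. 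Hence over one full period the global minimum is $\tilde{x}(0)$ and the global maximum is $\tilde{x}(\tilde{z}_1+\tau)$.

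With the extrema located, the formulas follow by evaluation and by solving for the zeros. Evaluating the first branch at $t=0$ gives $\underline{x}=\tilde{x}(0)=-\beta_U(1-e^{-\tau})$ at once, which is negative. To obtain $\tilde{z}_1$ I would set the second branch equal to zero, solving $\beta_L+(\tilde{x}(0)-\beta_L)e^{-\tilde{z}_1}=0$ for $e^{\tilde{z}_1}=(\beta_L-\tilde{x}(0))/\beta_L$, which yields $\tilde{z}_1=\ln\frac{\beta_L-\underline{x}}{\beta_L}$. Substituting $t=\tilde{z}_1+\tau$ into the same branch and using the identity $(\tilde{x}(0)-\beta_L)e^{-\tilde{z}_1}=-\beta_L$, a direct consequence of the defining equation for $\tilde{z}_1$, collapses the expression to $\overline{x}=\tilde{x}(\tilde{z}_1+\tau)=\beta_L(1-e^{-\tau})>0$.

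For the remaining quantities I would repeat the same zero-finding step one branch later. Setting the third branch to zero at $t=\tilde{z}_2$ gives $e^{\tilde{z}_2-\tilde{z}_1-\tau}=(\overline{x}+\beta_U)/\beta_U$, hence $\tilde{z}_2=\tilde{z}_1+\tau+\ln\frac{\overline{x}+\beta_U}{\beta_U}$; adding the final delay $\tau$ and combining the two logarithms produces $\tilde{T}=\tilde{z}_2+\tau=2\tau+\ln\big(\frac{\beta_L-\underline{x}}{\beta_L}\cdot\frac{\overline{x}+\beta_U}{\beta_U}\big)$.

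I do not expect any genuine obstacle: the entire argument is bookkeeping with elementary exponentials once the branches are in hand. The only point demanding a modicum of care is the monotonicity check in the first paragraph, which is what guarantees that the endpoint values $\tilde{x}(0)$ and $\tilde{x}(\tilde{z}_1+\tau)$ are the global minimum and maximum over a period rather than merely local extrema; and the hypotheses $\beta_L,\beta_U>0$ must be invoked both to secure these monotonicities and to ensure the logarithms have positive arguments.
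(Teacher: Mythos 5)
Your proposal is correct and follows essentially the same route as the paper: the paper states this corollary as an immediate consequence of the explicit exponential branches of $\tilde{x}$ constructed in the preceding theorem, leaving exactly the evaluation and zero-solving computations you perform (including the identity $(\tilde{x}(0)-\beta_L)e^{-\tilde{z}_1}=-\beta_L$ to collapse $\tilde{x}(\tilde{z}_1+\tau)$ to $\beta_L(1-e^{-\tau})$). Your added monotonicity check locating the global extrema at $t=0$ and $t=\tilde{z}_1+\tau$ is a detail the paper treats as evident from the construction, and it is carried out correctly.
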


Occasionally we shall abbreviate
$$
t_{\max}=\tilde{z}_1+\tau
$$
for the first nonnegative time where $\tilde{x}$ achieves its maximum. Accordingly,
$$
t_{\min}=0.
$$

\begin{remark}
For  $\beta_U >0$ there are infinitely many other periodic orbits in $Z$, given by periodic solutions of higher oscillation frequencies, compare \citep[Chapter XVI]{DvGVLW}
which deals with the simpler equation
$$
x'(t)=-\mathrm{sign}(x(t-1)).
$$
Therefore the periodic orbit
$$
\tilde{O}=\{\tilde{x}_t\in Z:t\in\mathbb{R}\}\subset Z_0
$$
is not globally attracting on $Z$.
\end{remark}
\begin{theorem}
Let $\beta_L,\beta_U>0$. Then for every $\phi\in Z_0$ we have
either
$$
x^{\phi}(t+z_1(\phi)+\tau)=\tilde{x}(t)\qquad\text{for all}\quad t\ge-\tau
$$
or
$$
x^{\phi}(t+z_1(\phi)+\tau)=\tilde{x}(t+\tilde{z}_1+\tau)\qquad\text{for all}\quad t\ge-\tau,
$$
and the periodic orbit $\tilde{O}\subset Z_0$ is stable.
\label{t:3.3}
\end{theorem}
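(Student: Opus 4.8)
The plan is to prove the statement in two stages. First I would establish the \emph{dichotomy}: every solution starting in $Z_0$ coincides, from time $z_1(\phi)+\tau$ onward, with one of exactly two translates of $\tilde x$; this shows that the orbit is reached in finite time. Then I would deduce \emph{stability} from this finite-time landing together with the continuity results already in hand (Propositions~\ref{prop:prop3.1} and~\ref{prop:3.2}). The whole argument rests on the observation that the piecewise-constant right-hand side ``forgets'' the initial data as soon as the solution has passed a transversal zero and then been driven by a single constant for a full delay.

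For the dichotomy, fix $\phi\in Z_0$, write $x=x^{\phi}$ and let $z_1=z_1(\phi)$ be the first positive zero. The key step is the claim that $x$ has no zero on $[z_1-\tau,z_1)$, hence constant sign there. Indeed $x\neq0$ on $(0,z_1)$ by minimality of $z_1$, and $x(z_1-\tau)\neq0$ is already known; the only remaining candidate for a zero would be a sign-changing zero $z_\phi\in(z_1-\tau,0)$ of $\phi$. But the positive-invariance computation shows that if $\phi$ changes sign at $z_\phi\in(-\tau,0)$ then $x$ is sign-definite on $(z_\phi,z_\phi+\tau]$, forcing $z_1>z_\phi+\tau$, i.e. $z_\phi<z_1-\tau$, a contradiction (the subcase $\phi(0)=0$ is disposed of separately and gives $z_1>\tau$). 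Consequently, on $[z_1,z_1+\tau]$ the delayed value has constant sign, so $x$ solves $x'=-x+c$ with $x(z_1)=0$ and either $c=\beta_L$ (crossing from $-$ to $+$) or $c=-\beta_U$ (crossing from $+$ to $-$). Integration gives $x(z_1+s)=\beta_L(1-e^{-s})$ or $x(z_1+s)=-\beta_U(1-e^{-s})$ on $[0,\tau]$, and comparing with the explicit formulas for $\tilde x$ on $[\tilde z_1,\tilde z_1+\tau]$ and on $[-\tau,0]$ (using Corollary~\ref{cor:max-min-per}) shows $x_{z_1+\tau}=\tilde x_{t_{\max}}$ or $x_{z_1+\tau}=\tilde x_0$, respectively. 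Forward uniqueness of the semiflow then yields the two displayed alternatives for all $t\ge-\tau$.

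For stability I would argue locally at each point of $\tilde O$ and patch by compactness. Fix $p=\tilde x_{t_0}\in\tilde O$ and $\epsilon>0$. The solution $x^{p}=\tilde x(t_0+\cdot)$ has a transversal zero $z^{\ast}\in(0,\tilde T]$ with $x^{p}(z^{\ast}-\tau)\neq0$, so by Proposition~\ref{prop:3.2} every $\phi$ close enough to $p$ has a zero near $z^{\ast}$; hence the landing time $z_1(\phi)+\tau$ is bounded by a fixed $\hat T:=\tilde T+2\tau+1$, uniformly for $\phi$ near $p$. On the compact interval $[0,\hat T]$ the semiflow is uniformly continuous in the initial datum (Proposition~\ref{prop:prop3.1} together with a tube-lemma argument), so shrinking the neighborhood of $p$ keeps $x^{\phi}_t$ within $\epsilon$ of $x^{p}_t\in\tilde O$ for all $t\in[0,\hat T]$; for $t\ge\hat T$ the solution has already landed on $\tilde O$ by the dichotomy and remains there. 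Covering the compact orbit $\tilde O$ by finitely many such neighborhoods produces a neighborhood $W$ with $\mathrm{dist}(x^{\phi}_t,\tilde O)<\epsilon$ for all $t\ge0$ whenever $\phi\in W\cap Z_0$, which is the desired stability.

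I expect the main obstacle to lie in the stability half rather than in the dichotomy. The natural route to a uniform transient bound would be continuity of the map $\phi\mapsto z_1(\phi)$, but this fails at precisely the points of $\tilde O$ that are zeros of $\tilde x$ (where a perturbation can make the first zero jump between $\approx 0$ and $\approx\tilde T$); these points are unavoidable since $\tilde x$ vanishes at $\tilde z_1,\tilde z_2,\dots$ and at $-\tau$. The resolution is to bound the landing time from above by Proposition~\ref{prop:3.2} alone (a transversal zero of the reference solution persists under perturbation, giving an upper bound on the first zero even where $z_1$ is discontinuous), and never to use its exact value. A secondary care point is the constant-sign claim at the very first zero when $z_1<\tau$, where the argument must reach back into the initial data; this is handled by the invariance estimate quoted above, which guarantees that a sign-changing zero of $\phi$ is always pushed more than $\tau$ before $z_1$.
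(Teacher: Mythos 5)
Your dichotomy argument is essentially the paper's own Part 1, only written out in more detail: the paper infers the constant sign of $x^{\phi}$ on $[z_1-\tau,z_1)$ directly from the spacing and transversality of zeros already established together with the positive invariance of $Z_0$, and then reads off the two alternatives from \eqref{e:3.8}--\eqref{e:3.9}; you re-derive those supporting facts (correctly, including the $\phi(0)=0$ subcase) and add the explicit identification of $x^{\phi}_{z_1+\tau}$ with $\tilde{x}_0$ or $\tilde{x}_{\tilde{z}_1+\tau}$. This half is correct and matches the paper.

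Where you genuinely depart from the paper --- and prove strictly less --- is the stability half. Your only landing mechanism is the dichotomy, which exists only on $Z_0$, so your conclusion is stability for $\phi\in W\cap Z_0$, i.e.\ for the semiflow restricted to $Z_0$. The paper's proof establishes more: for every $\phi\in Z$ with $\mathrm{dist}(\phi,\tilde{O})<\delta$, the solution stays $\epsilon$-close to $\tilde{O}$ and actually lies on $\tilde{O}$ for all $t\ge\tilde{T}+2\tau+2\rho$. The engine is its Part 2, a merging criterion with no $Z_0$ hypothesis: choose $r>0$ and $\rho\in(0,\tilde{z}_1)$ with $\tilde{x}\le-r$ on an interval of length $\tau$ and $\tilde{x}\ge r$ a time $2\rho$ after it; any $\phi\in Z$ whose solution is negative on the corresponding window and positive at the later time has a first zero $z$ there that is preceded by a full delay of negativity, whence $x^{\phi}_{z+\tau}\in\tilde{O}$. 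Tracking the orbit within $r/2$ over one period (obtained from uniform continuity of the solution map over the compact orbit) triggers this criterion no matter how many zeros $\phi$ has. Your route --- persistence of a transversal zero via Proposition~\ref{prop:3.2} to bound the landing time, continuity on $[0,\hat{T}]$, compactness patching --- cannot be repaired to cover $Z\setminus Z_0$, because off $Z_0$ you have no landing statement at all; and such initial data exist arbitrarily close to $\tilde{O}$ (a small wiggle added to an orbit segment near its zero creates extra sign changes). If one reads ``stable'' as the paper announces it (``stable, when the semiflow is restricted to the smaller set $Z_0$''), your proof is complete; if one reads it as stability in the phase space $Z$ on which $S$ is defined --- which is what the paper's own proof delivers --- yours has a genuine gap, located exactly at the absence of a $Z_0$-free merging mechanism. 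Your closing observation about the discontinuity of $\phi\mapsto z_1(\phi)$ at orbit points vanishing at $0$, and the fix of using only an upper bound on the first zero, is correct and is a real care point for your route; note that the paper sidesteps it entirely by never locating zeros of the perturbed solution, only checking its sign pattern on a fixed window.
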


\begin{proof}
\begin{enumerate}
\item For $\phi\in Z_0$, $x=x^{\phi}$, and $z_1=z_1(\phi)$ we infer from (\ref{e:3.8}) and (\ref{e:3.9}) that
either $x$ is strictly decreasing on $[z_1,z_1+\tau]$, or that $x$ is strictly increasing on $[z_1,z_1+\tau]$. In the first case we obtain
$$
x(t+z_1+\tau)=\tilde{x}(t)\qquad\text{for all}\quad t\ge-\tau
$$
while in the second case,
$$
x(t+z_1+\tau)=\tilde{x}(t+\tilde{z}_1+\tau)\qquad\text{for all}\quad t\ge-\tau.
$$
In both cases, $x^{\phi}_t$ is on the periodic orbit $\tilde{O}$ for $t\ge z_1(\phi)+\tau$.

\item There exist $r>0$ and $\rho\in(0,\tilde{z}_1)$ with $\tilde{x}(t)\le-r$ for $\tilde{z}_1-\rho-\tau\le t\le \tilde{z}_1-\rho$ and
$\tilde{x}(\tilde{z}_1+\rho)\ge r$. It follows that for each $\psi\in\tilde{O}$ the shifted copy $x^{\psi}$ of $\tilde{x}$ has the property that there exists
$u=u(\psi)\in[0,\tilde{T}]$ with $x^{\psi}(t)\le-r$ for  $u\le t\le u+\tau$ and $x^{\psi}(u+\tau+2\rho)\ge r$. Observe that for every $\phi\in Z$ with $x^{\phi}(t)<0$ for $u\le t\le u+\tau$ and $x^{\phi}(u+\tau+2\rho)>0$ the solution $x^{\phi}$ has a first zero $z$ in $(u+\tau,u+\tau+2\rho)$, and $x^{\phi}_{z+\tau}\in\tilde{O}$, which implies that all segments $x^{\phi}_t$ with $t\ge\tilde{T}+2\tau+2\rho$ belong to the orbit $\tilde{O}$.

\item Let $\epsilon>0$. Continuous dependence on initial data yields that the map
$$
Z\ni\phi\mapsto x^{\phi}|_{[-\tau,\tilde{T}+2\tau+2\rho]}\in C([-\tau,\tilde{T}+2\tau+2\rho],\mathbb{R}),
$$
where $x^{\phi}|_{I}$ denotes the restriction of the function $x^{\phi}$ to the interval $I$,
is continuous (with respect to the maximum-norm on the target space). Using uniform continuity on the compact orbit $\tilde{O}\subset Z$ we find $\delta>0$ so that  for all $\phi\in Z$ and all $\psi\in\tilde{O}$ with $|\phi-\psi|_C<\delta$ we have
$$
|x^{\phi}(t)-x^{\psi}(t)|<\min\left\{\epsilon,\dfrac{r}{2}\right\}\qquad\text{for all}\quad t\in[-\tau,\tilde{T}+2\tau+2\rho].
$$
Let $\phi\in Z$ with $\text{dist}(\phi,\tilde{O})=\inf_{\psi\in\tilde{O}}|\phi-\psi|_C<\delta$ be given. Then for some $\psi\in\tilde{O}$, $|\phi-\psi|_C<\delta$.
Choose $u=u(\psi)\in[0,\tilde{T}]$ according to Part 2. The previous estimate of $|x^{\phi}(t)-x^{\psi}(t)|$
yields $x^{\phi}(t)<0$ for $u\le t\le u+\tau$ and $x^{\phi}(u+\tau+2\rho)>0$. Using part 2 we infer $x^{\phi}_t\in\tilde{O}$ for $t\ge\tilde{T}+2\tau+2\rho$.  Altogether,
$$
\text{dist}(x^{\phi}_t,\tilde{O})<\epsilon\qquad\text{for}\quad0\le t\le\tilde{T}+2\tau+2\rho
$$
and $\text{dist}(x^{\phi}_t,\tilde{O})=0$ for $t\ge\tilde{T}+2\tau+2\rho$.
\end{enumerate}
\end{proof}

\begin{remark}
One can show that the other periodic orbits are all unstable, and that the domain of attraction of the periodic orbit  $\tilde{O}$ is open and dense in $Z$, compare \citep[Chapter XVI]{DvGVLW}, and the main result of \cite{M-PW}
about equation (\ref{e:gen-dde}) with a smooth and strictly monotone function $f$.
\end{remark}

\section{Pulse-like perturbations}
\label{sec:pulse}

\medskip

In the following we assume
$$
-\beta_U<0<\beta_L
$$
and study a particular, simple deviation from the periodic solution  $\tilde{x}$ and the subsequent return to the stable and attracting periodic orbit $\tilde{O}$: We consider a function $x^{(\Delta)}\colon \mathbb{R}\to\mathbb{R}$ which up to $t=\Delta\in[0,\tilde{T})$ equals the periodic solution $\tilde{x}$ of Eq. \eqref{e:pl-sys-dimen1}. Then for $\Delta\le t\le\Delta+\sigma$  the function $x^{(\Delta)}$ is defined by the equation
$$
x'(t)=-x(t)+f(x(t-\tau))+a
$$
with a constant $a>0$. This results in a deviation from the periodic solution $\tilde{x}$ which begins at time $t=\Delta$ and lasts until the time $t=\Delta+\sigma$. Informally we speak of a pulse of amplitude $a$, with $\Delta$ the onset time of the pulse and  $\sigma$ the duration of the pulse. For $t\ge\Delta+\sigma$, the function $x^{(\Delta)}$ is given again by Eq. (\ref{e:pl-sys-dimen1}). For perturbations $a>0$ not too large it will merge into the periodic solution in finite time, compare Theorem \ref{t:3.3}.

\medskip

An interpretation of this is as follows. Eq. (\ref{e:pl-sys-dimen1}) is a mathematically convenient form of a (very simple) model for the production and decay of blood cells of a certain type, e.g. neutrophils.  The periodic solution $\tilde{x}$ stands for the density of neutrophils in a patient as a function of time, perhaps induced by chemotherapy or as a consequence of cyclical neutropenia but in the absence of any further medical intervention. The function $x^{(\Delta)}$ describes the evolution of the neutrophil density for the case that at time $\Delta-\tau$ some medication has been administered which increases the production of cells in the bone marrow during the time interval $[\Delta-\tau,\Delta-\tau+\sigma]$. The constant $a>0$ stands for the increase in production occasioned, for example, by the administration of G-CSF. After the time $\tau>0$ needed for production (and differentiation) of cells, that is, during the time interval $[\Delta, \Delta+\sigma]$ the neutrophils are released into the blood stream.  Later on production and decay of neutrophils is again governed by the patient's feedback system alone.

%\medskip

For simplicity we assume
$$
0<\sigma\le\tau,
$$
from here on, that is, the effect of intervention lasts for a time interval $\sigma$ less than the (production) delay $\tau$. The quantities we are interested in are the local extrema of $x^{(\Delta)}$ and the time required to return to the periodic orbit.  The latter is  captured by the {\it cycle length map}
$$
T:[0,\tilde{T})\ni\Delta\mapsto T(\Delta)\in\mathbb{R}\cup\{\infty\}
$$
which is defined formally as follows: The zeros of $x^{(\Delta)}$ and of $\tilde{x}$ in $(-\infty,\Delta]$ coincide. Suppose $\tilde{z}_J$, $J=j(\Delta)\in\{0,1,2\}$, is the largest one of these zeros, and there exists a smallest zero $z>\tilde{z}_J$ of $x^{(\Delta)}$ with $x^{(\Delta)}(z+t)=\tilde{x}(\tilde{z}_J+t)$ for all $t\ge0$. Then
$$
T(\Delta)=z-\tilde{z}_J,\quad\text{and}\quad T(\Delta)=\infty\quad\text{otherwise}.
$$
In case $T(\Delta)<\infty$ let
$$
\underline{x}_{\Delta}=\min_{\tilde{z}_J\le t\le z}x^{(\Delta)}(t)
$$
and
$$
\overline{x}_{\Delta}=\max_{\tilde{z}_J\le t\le z}x^{(\Delta)}(t).
$$

\begin{remark}
Observe that in our model situation treatment is considered successful if the minimal value $\underline{x}_{\Delta}$ is above the minimal value
$$
\underline{x}=\tilde{x}(0)=-\beta_U(1-e^{-\tau})<0\qquad\text{(see Corollary \ref{cor:max-min-per})}
$$
of $\tilde{x}$
while in the opposite case medication actually increases the risk for the patient in the sense that the nadir of the oscillation is lower and would thus lead to more severe cytopenia which is one of the major clinical problems.
\end{remark}
The  local minima and maxima of the function $x^{(\Delta)}$ and the cycle length $T(\Delta)$ depend on the parameters
$$
\tau>0,\,\,\beta_L>0>-\beta_U,\,\, a>0,\,\,\sigma\in(0,\tau],\quad\text{and}\quad \Delta\ge0.
$$
We assume that
$$
-\beta_U+a<0,
$$
 which will be instrumental in showing that after perturbation solutions do return to the periodic solution $\tilde{x}$, with the consequence that cycle lengths are finite. (For larger parameter $a$ solutions after perturbation may settle down on other, unstable periodic solutions of Eq. (\ref{e:pl-sys-dimen1}), with higher oscillation frequencies. For more on this, see Remark 5.4.)
In the remainder of this section we keep the parameters $\tau, \beta_L, \beta_U, a, \sigma$ fixed. In addition to finiteness of cycle lengths we shall see that the cycle length map and the maps
$$
[0,\tilde{T})\ni\Delta\mapsto\overline{x}_{\Delta}\in\mathbb{R}\quad\text{and}\quad
[0,\tilde{T})\ni\Delta\mapsto\underline{x}_{\Delta}\in\mathbb{R}
$$
are continuous.

For $a>0$ with $-\beta_U+a<0$ the solutions $x=x^{a,\phi}$ of the initial value problem
$$
x'(t)=-x(t)+f(x(t-\tau))+a\,\,\text{for}\,\,t>0,\,\,x_0=\phi
$$
define a continuous semiflow $S_a\colon[0,\infty)\times Z\to Z$ by $S_a(t,\phi)=x^{a,\phi}_t$, see Section~\ref{sec:global}.  The set $Z_0$ is positively invariant under $S_a$, and for each $\phi\in Z_0$ the zeros of $x^{a,\phi}$
are all transversal and spaced at distances larger than the delay $\tau$.

The solution $x=x^{(\Delta)} $ during a pulse (which begins at $\Delta\in[0,\tilde{T})$) can now be described as follows: For $\Delta$ given, define $\phi=\tilde{x}_{\Delta}$ and then $\chi=S_a(\sigma,\phi)=x^{a,\phi}_{\sigma}$. We obtain
\begin{align*}
x(t) & =  \tilde{x}(t)\,\,\text{for}\,\,t\le\Delta,\\
x(t) & =  x^{a,\phi}(t-\Delta)\,\,\text{for}\,\,\Delta\le t\le\Delta+\sigma,\\
x(t) & =  x^{\chi}(t-(\Delta+\sigma))\,\,\text{for}\,\,t\ge\Delta+\sigma,
\end{align*}
or equivalently,
\begin{align*}
x_t & =  \tilde{x}_t\,\,\text{for}\,\,t\le\Delta,\\
x_t & =  S_a(t-\Delta,x_{\Delta})\,\,\text{for}\,\,\Delta\le t\le\Delta+\sigma,\\
x_t & =  S(t-(\Delta+\sigma),x_{\Delta+\sigma})\,\,\text{for}\,\,t\ge\Delta+\sigma.
\end{align*}

Notice that for  $x=x^{(\Delta)}$, all zeros are transversal and spaced at distances larger than the delay $\tau$.
They form a strictly increasing sequence of points
$$
z_{\Delta,j},\quad j\in\mathbb{Z},\quad\text{with}\quad z_{\Delta,j}=\tilde{z}_j\quad\text{for all integers}\quad j\le j_{\Delta}
$$
where $J=j_{\Delta}\in\{0,1,2\}$ is given by
$$
\tilde{z}_J\le\Delta<\tilde{z}_{J+1}.
$$

Using $\tilde{x}_t=S(t,\tilde{x}_0)$ for all $t\ge0$ and the continuity of both semiflows we easily obtain from the
previous representation of  $x_t=x^{(\Delta)}_t$ that the map
$$
[0,\tilde{T})\times[0,\infty)\ni(\Delta,t)\mapsto x^{(\Delta)}_t\in C
$$
is continuous, which in turn yields the continuity of the map
$$
[0,\tilde{T})\times[0,\infty)\ni(\Delta,t)\mapsto x^{(\Delta)}(t)\in\mathbb{R}
$$
since $ x^{(\Delta)}(t)=ev(x^{(\Delta)}_t)$ and the evaluation $ev:C\ni\phi\mapsto\phi(0)\in\mathbb{R}$ is continuous.
Arguing as in the proof of Proposition \ref{prop:3.2} and using transversality of zeros we obtain the following results.

\begin{proposition}
For $\Delta_0\in[0,\tilde{T})$ and $z>0$ with $x^{(\Delta)}(z)=0$ and $\epsilon>0$ there exists $\delta>0$ such  that for each $\Delta\in[0,\tilde{T})$ with $|\Delta-\Delta_0|<\delta$ there is $z'\in(z-\epsilon,z+\epsilon)$ with
$x^{\Delta}(z')=0$.
\end{proposition}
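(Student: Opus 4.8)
The plan is to prove this exactly as Proposition~\ref{prop:3.2} was proved, with the continuous dependence of solutions on initial data replaced by the continuity of the map $[0,\tilde T)\times[0,\infty)\ni(\Delta,t)\mapsto x^{(\Delta)}(t)\in\mathbb R$ established in the paragraph preceding the statement. The assertion is a continuous-dependence-of-zeros result, now in the onset parameter $\Delta$ rather than in the initial function, and it rests on two facts already recorded above: that continuity of evaluation, and that every zero of each $x^{(\Delta)}$ is transversal with consecutive zeros spaced at distance larger than $\tau$. (I read the hypothesis as $x^{(\Delta_0)}(z)=0$.)

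First I would localize near the given zero $z$ of $x^{(\Delta_0)}$. Since the zeros of $x^{(\Delta_0)}$ are spaced more than $\tau$ apart, $z-\tau$ is not a zero, so $x^{(\Delta_0)}(z-\tau)\neq0$ and, by continuity, $x^{(\Delta_0)}(t-\tau)$ keeps a fixed sign for $t$ near $z$; hence $f(x^{(\Delta_0)}(t-\tau))$ equals a fixed constant $c_0\in\{\beta_L,-\beta_U\}$ there. On this neighborhood $x^{(\Delta_0)}$ solves $x'=-x+c_0$ off the pulse and $x'=-x+c_0+a$ on the pulse interval $[\Delta_0,\Delta_0+\sigma]$, so at points where $|x|$ is small its derivative has the sign of $c_0$ in both cases, because $\beta_L>0$, $-\beta_U<0$ and, crucially, $-\beta_U+a<0$. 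Thus $x^{(\Delta_0)}$ is strictly monotone across $z$ even if $z$ happens to fall at an endpoint of the pulse, and I can pick $\eta\in(0,\epsilon)$ with $x^{(\Delta_0)}(z-\eta)$ and $x^{(\Delta_0)}(z+\eta)$ nonzero and of opposite sign.

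Next I would invoke continuity of $(\Delta,t)\mapsto x^{(\Delta)}(t)$ at the points $(\Delta_0,z-\eta)$ and $(\Delta_0,z+\eta)$ to produce $\delta>0$ so that $|\Delta-\Delta_0|<\delta$ forces $x^{(\Delta)}(z-\eta)$ and $x^{(\Delta)}(z+\eta)$ to stay so close to $x^{(\Delta_0)}(z-\eta)$ and $x^{(\Delta_0)}(z+\eta)$ that each keeps its sign. The continuous function $t\mapsto x^{(\Delta)}(t)$ then changes sign on $[z-\eta,z+\eta]$, and the intermediate value theorem delivers a zero $z'\in(z-\eta,z+\eta)\subset(z-\epsilon,z+\epsilon)$, as required. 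The one step needing genuine care is the strict-monotonicity claim, i.e. verifying that the direction of monotonicity does not reverse when $z$ sits on the boundary of the pulse; this is exactly where the standing hypothesis $-\beta_U+a<0$ is used. Everything else is a verbatim transcription of the argument for Proposition~\ref{prop:3.2}, so no new machinery is required.
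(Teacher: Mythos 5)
Your proof is correct and follows exactly the route the paper intends: the paper gives no separate argument for this proposition, stating only that one should argue as in the proof of Proposition~\ref{prop:3.2}, using transversality of zeros and the continuity of the map $[0,\tilde{T})\times[0,\infty)\ni(\Delta,t)\mapsto x^{(\Delta)}(t)\in\mathbb{R}$, which is precisely your sign-bracketing plus intermediate value theorem argument. You in fact supply a detail the paper leaves implicit --- that strict monotonicity across the zero survives even when $z$ lies at a pulse endpoint, which is where $\beta_L>0$, $-\beta_U<0$ and the standing hypothesis $-\beta_U+a<0$ are needed --- so no gap remains.
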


\begin{corollary}\label{cor:contin}
Each map
$$
[0,\tilde{T})\ni\Delta\mapsto z_{\Delta,j}\in(0,\infty),\quad j\in\mathbb{N},
$$
is continuous.
\end{corollary}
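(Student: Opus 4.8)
The plan is to fix $j\in\mathbb{N}$ and $\Delta_0\in[0,\tilde{T})$ and prove continuity of the map $\Delta\mapsto z_{\Delta,j}$ at $\Delta_0$, exactly along the lines by which the earlier Corollary was deduced from Proposition~\ref{prop:3.2}. The preceding Proposition already supplies the ``existence'' half of the statement: near any zero of $x^{(\Delta_0)}$ the perturbed solution $x^{(\Delta)}$ has a zero for all $\Delta$ close to $\Delta_0$. What remains is to upgrade this ``there is a nearby zero'' into ``the $j$-th zero moves only slightly'', a counting/matching argument resting on transversality and on the spacing of the zeros.

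Concretely, write $z_k=z_{\Delta_0,k}$ for $k=1,\dots,j$; these are the first $j$ positive, transversal zeros of $x^{(\Delta_0)}$, spaced at distances larger than $\tau$. Given $\epsilon>0$ I would first shrink it so that $\epsilon<\tau/2$ and $z_1-\epsilon>0$; then the closed intervals $I_k=[z_k-\epsilon,\,z_k+\epsilon]$, $k=1,\dots,j$, are automatically pairwise disjoint (since $z_{k+1}-z_k>\tau>2\epsilon$) and contained in $(0,\infty)$, and moreover $z_j+\epsilon<z_{j+1}$. Applying the preceding Proposition to each of $z_1,\dots,z_j$ and taking the smallest of the resulting radii yields a single $\delta_1>0$ such that $|\Delta-\Delta_0|<\delta_1$ forces $x^{(\Delta)}$ to have at least one zero in each $I_k$.

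The step I expect to be the main obstacle is excluding \emph{spurious} zeros, i.e. showing that for $\Delta$ near $\Delta_0$ the solution $x^{(\Delta)}$ has no zeros in the compact gap set
$$
K=[0,\,z_j+\epsilon]\setminus\bigcup_{k=1}^{j}(z_k-\epsilon,\,z_k+\epsilon).
$$
On $K$ the function $x^{(\Delta_0)}$ never vanishes (its only zeros in $[0,z_j+\epsilon]$ are $z_1,\dots,z_j$, all interior to the removed intervals, while $x^{(\Delta_0)}(0)=\tilde{x}(0)=\underline{x}<0$), so $|x^{(\Delta_0)}|$ attains a positive minimum $m$ on $K$. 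Invoking the already-established continuity of the map $(\Delta,t)\mapsto x^{(\Delta)}(t)$ on $[0,\tilde{T})\times[0,\infty)$ together with the compactness of $\{\Delta_0\}\times K$ (a tube-lemma argument), I would produce $\delta_2>0$ with $|x^{(\Delta)}(t)|>m/2>0$ for all $t\in K$ whenever $|\Delta-\Delta_0|<\delta_2$; hence $x^{(\Delta)}$ has no zero in $K$ for such $\Delta$.

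Finally I would set $\delta=\min\{\delta_1,\delta_2\}$ and assemble the two facts. For $|\Delta-\Delta_0|<\delta$ every zero of $x^{(\Delta)}$ in $[0,z_j+\epsilon]$ lies in $\bigcup_k I_k$, there is at least one in each $I_k$, and---because the zeros of $x^{(\Delta)}$ are themselves spaced at distances larger than $\tau>2\epsilon$---at most one in each $I_k$. Thus $x^{(\Delta)}$ has exactly $j$ positive zeros in $[0,z_j+\epsilon]$, one per interval, and since $I_1<\dots<I_j$ these are precisely $z_{\Delta,1}<\dots<z_{\Delta,j}$ in order. In particular $z_{\Delta,j}\in I_j$, i.e. $|z_{\Delta,j}-z_{\Delta_0,j}|\le\epsilon$, which is the desired continuity. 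I would close with a short remark that the argument is insensitive to the integer $j_\Delta$ jumping as $\Delta$ crosses some $\tilde{z}_J$: the index labeling of the zeros may shift, but the underlying function $x^{(\Delta)}(t)$ still depends continuously on $(\Delta,t)$, and that is all the argument uses.
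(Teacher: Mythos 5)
Your proof is correct and takes essentially the approach the paper itself indicates: deduce the corollary from the preceding proposition on persistence of transversal zeros, using the fact that consecutive zeros of $x^{(\Delta)}$ are spaced more than $\tau$ apart to match the $j$-th positive zero of $x^{(\Delta)}$ with that of $x^{(\Delta_0)}$. The paper leaves these counting details to the reader, and your gap-set/tube-lemma argument excluding spurious zeros fills them in correctly.
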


The proofs of the following results are provided in the appendix.
\begin{proposition}\label{prop:prop4.2}
For every $\Delta\in[0,\tilde{T})$ we have $T(\Delta)=z_{\Delta,J+2}-z_{\Delta,J}$ with $J=j_{\Delta}$.
\end{proposition}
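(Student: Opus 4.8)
The plan is to prove that the return zero $z$ in the definition of $T(\Delta)$ is exactly the second zero of $x^{(\Delta)}$ after $\tilde{z}_J$, that is $z=z_{\Delta,J+2}$; since $J=j_\Delta$ gives $z_{\Delta,J}=\tilde{z}_J$, the asserted identity $T(\Delta)=z_{\Delta,J+2}-z_{\Delta,J}$ follows at once. Throughout I would use that every zero $z_{\Delta,j}$ is transversal and that consecutive zeros are more than $\tau$ apart, so these zeros alternate in crossing direction; as they coincide with the $\tilde{z}_j$ for $j\le J$, the crossing direction at $z_{\Delta,j}$ depends only on the parity of $j$, in the same way as for the zeros $\tilde{z}_j$ of $\tilde{x}$. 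I would also use the periodicity relation $\tilde{z}_{j+2}=\tilde{z}_j+\tilde{T}$, which yields $\tilde{x}(\tilde{z}_{j'}+\cdot)=\tilde{x}(\tilde{z}_{j}+\cdot)$ whenever $j\equiv j'\pmod 2$.

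First I would reduce the defining condition to a statement about zeros and the orbit. The requirement $x^{(\Delta)}(z+t)=\tilde{x}(\tilde{z}_J+t)$ for all $t\ge0$ says precisely that $x^{(\Delta)}$ coincides, from $z$ onward, with the translate of $\tilde{x}$ carrying $\tilde{z}_J$ to $z$; in particular $z$ must be a zero of $x^{(\Delta)}$ from which $x^{(\Delta)}$ lies on $\tilde{O}$ and whose crossing direction equals that at $\tilde{z}_J$. Conversely, by the periodicity relation the phase is automatic: once $x^{(\Delta)}$ is on $\tilde{O}$ at a zero of the same parity as $J$, the matching identity holds. Hence $z$ is the smallest zero of $x^{(\Delta)}$ that exceeds $\tilde{z}_J$, has the same parity as $J$, and from which $x^{(\Delta)}$ lies on $\tilde{O}$; these zeros form the subsequence $z_{\Delta,J+2},z_{\Delta,J+4},\dots$, so it remains to show that $x^{(\Delta)}$ is already on $\tilde{O}$ from $z_{\Delta,J+2}$ onward.

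Next I would locate the first zero after the pulse and invoke Theorem~\ref{t:3.3}. Writing $\phi=\tilde{x}_\Delta$ and $\chi=S_a(\sigma,\phi)=x^{(\Delta)}_{\Delta+\sigma}$, positive invariance of $Z_0$ under $S_a$ gives $\chi\in Z_0$, so Theorem~\ref{t:3.3} applies to the continuation $x^{(\Delta)}(t)=x^{\chi}(t-(\Delta+\sigma))$. The decisive observation is that the merging identity of Theorem~\ref{t:3.3} holds \emph{from the first zero itself}: taking $t=-\tau$ in either alternative shows that $x^{\chi}$ equals the appropriate translate of $\tilde{x}$ already at $z_1(\chi)$, aligned to $\tilde{z}_0$ if $z_1(\chi)$ is a down-crossing and to $\tilde{z}_1$ if it is an up-crossing. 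Translating back, $x^{(\Delta)}$ lies on $\tilde{O}$ from its first post-pulse zero on, correctly phased according to that zero's crossing direction. Since consecutive zeros are more than $\tau\ge\sigma$ apart, the interval $(\Delta,\Delta+\sigma]$ contains at most one zero, so the first zero of $x^{(\Delta)}$ strictly after the pulse is $z_{\Delta,J+1}$ (no zero in the pulse) or $z_{\Delta,J+2}$ (one zero in the pulse). In the former case $z_{\Delta,J+1}$ has parity $J+1$, so the next zero $z_{\Delta,J+2}$ is the first on-orbit zero of parity $J$; in the latter case $z_{\Delta,J+2}$ is itself the first post-pulse zero and already has parity $J$. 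In both cases $x^{(\Delta)}$ is on $\tilde{O}$ from $z_{\Delta,J+2}$ onward, and the only zero strictly between $\tilde{z}_J$ and $z_{\Delta,J+2}$, namely $z_{\Delta,J+1}$, has the wrong crossing direction. Hence $z=z_{\Delta,J+2}$ and $T(\Delta)=z_{\Delta,J+2}-z_{\Delta,J}<\infty$, finiteness coming from $-\beta_U+a<0$, which is what secures $\chi\in Z_0$ and allows Theorem~\ref{t:3.3} to be applied.

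I expect the main obstacle to be the observation underlying the third paragraph, that return to the orbit is effective already at the first post-pulse zero rather than only a delay $\tau$ later, together with the parity and crossing-direction bookkeeping that makes the phase condition in the definition of $T$ automatic. Once these are in place, the at-most-one-zero-in-the-pulse count, which uses $\sigma\le\tau$, closes the argument uniformly in the two cases and over the three values $J\in\{0,1,2\}$.
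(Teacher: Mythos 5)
Your proof is correct, but it reaches the conclusion by a genuinely different route than the paper's. The paper argues self-containedly: it first rules out $z_{\Delta,J+1}$ by the same sign-alternation observation you make (the crossing direction there is opposite to that at $\tilde{z}_J$), then proves the uniform timing bound $z_{\Delta,J+2}>z_{\Delta,J+1}+\tau>\Delta+\tau\ge\Delta+\sigma$, so that the pulse is over before $z_{\Delta,J+2}$ no matter where $z_{\Delta,J+1}$ falls, and finally verifies $x^{(\Delta)}(z_{\Delta,J+2}+t)=\tilde{x}(\tilde{z}_J+t)$ for $t\ge0$ directly: both functions vanish at the respective zero, obey Eq.~\eqref{e:pl-sys-dimen1} from there on, and have histories of equal sign, so the piecewise-constant nonlinearity forces them through the same chain of linear ODEs. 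You instead delegate this merging step to Theorem~\ref{t:3.3}, applied to the post-pulse state $\chi=x^{(\Delta)}_{\Delta+\sigma}\in Z_0$, correctly reading off from the ``for all $t\ge-\tau$'' in its statement that the solution is on the orbit already from the first post-pulse zero (not merely a delay $\tau$ later), and you then close the argument with parity bookkeeping and a two-case count of zeros in $(\Delta,\Delta+\sigma]$. Your version buys modularity: the equation-specific computation is reused from Theorem~\ref{t:3.3} rather than repeated, and the proposition reduces to combinatorics about parities and $\tau$-spacing of zeros; your anchoring of the parity via the zeros $z_{\Delta,j}=\tilde{z}_j$, $j\le J$, plus alternation also handles the edge case $\Delta=\tilde{z}_J$ cleanly. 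The paper's version buys uniformity and independence: the single inequality $z_{\Delta,J+2}>\Delta+\sigma$ replaces your case distinction, and the proof does not lean on the particular normalization (alignment to $\tilde{z}_0$ or to $\tilde{z}_1$) built into Theorem~\ref{t:3.3}, at the cost of re-deriving in miniature the sign-matching argument that also underlies that theorem. Both proofs ultimately rest on the same preliminaries from Section~\ref{sec:pulse}: transversality and spacing of the zeros $z_{\Delta,j}$, their coincidence with $\tilde{z}_j$ for $j\le j_{\Delta}$, and the standing hypothesis $-\beta_U+a<0$.
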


\begin{corollary}\label{cor:cor4.2}
The cycle length map is continuous.
\end{corollary}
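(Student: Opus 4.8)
The plan is to reduce everything to Proposition~\ref{prop:prop4.2} together with the continuity of the individual zero maps furnished by Corollary~\ref{cor:contin}. By Proposition~\ref{prop:prop4.2}, for $\Delta\in[0,\tilde{T})$ we have $T(\Delta)=z_{\Delta,J+2}-z_{\Delta,J}$ with $J=j_{\Delta}$, and since $z_{\Delta,j}=\tilde{z}_j$ for all $j\le j_\Delta$ this reads $T(\Delta)=z_{\Delta,\,j_\Delta+2}-\tilde{z}_{j_\Delta}$. First I would record that the index $j_\Delta\in\{0,1,2\}$ is constant on each of the three subintervals $[0,\tilde{z}_1)$, $[\tilde{z}_1,\tilde{z}_2)$, $[\tilde{z}_2,\tilde{T})$, taking the values $0,1,2$ there. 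On the interior of each such interval $T$ is the difference of the map $\Delta\mapsto z_{\Delta,\,j_\Delta+2}$ and the constant $\tilde{z}_{j_\Delta}$, hence continuous by Corollary~\ref{cor:contin}. Moreover, since $j_\Delta$ is constant (equal to $K+1$) on the whole interval $[\tilde{z}_{K+1},\tilde{z}_{K+2})$, the same observation shows that $T$ is continuous \emph{from the right} at each of the two break points $\tilde{z}_1$ and $\tilde{z}_2$.

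The only remaining point I would have to address is left-continuity at the break points $\Delta_0=\tilde{z}_{K+1}$ for $K\in\{0,1\}$, where $j_\Delta$ jumps from $K$ to $K+1$. Writing out the two one-sided quantities using Corollary~\ref{cor:contin}, the left limit equals $\lim_{\Delta\uparrow\tilde{z}_{K+1}}T(\Delta)=z_{\tilde{z}_{K+1},K+2}-\tilde{z}_K$ (here $j_\Delta=K$), while the value at the break point is $T(\tilde{z}_{K+1})=z_{\tilde{z}_{K+1},K+3}-\tilde{z}_{K+1}$ (here $j_{\tilde{z}_{K+1}}=K+1$, so $z_{\tilde{z}_{K+1},K+1}=\tilde{z}_{K+1}$). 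Thus left-continuity is equivalent to the single gap identity
\[
z_{\tilde{z}_{K+1},K+3}-z_{\tilde{z}_{K+1},K+2}=\tilde{z}_{K+1}-\tilde{z}_K .
\]

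The key step is to obtain this identity as a limit of one that already holds throughout the left interval. For $\Delta\in[\tilde{z}_K,\tilde{z}_{K+1})$ we have $j_\Delta=K$, and, using finiteness of the cycle length (guaranteed by the standing assumption $-\beta_U+a<0$), the definition of $T$ together with Proposition~\ref{prop:prop4.2} gives $x^{(\Delta)}(z_{\Delta,K+2}+t)=\tilde{x}(\tilde{z}_K+t)$ for all $t\ge0$. Matching the ordered transversal zeros of these two functions for $t\ge0$ yields $z_{\Delta,K+2+m}-z_{\Delta,K+2}=\tilde{z}_{K+m}-\tilde{z}_K$ for every $m\ge0$; with $m=1$ this produces $z_{\Delta,K+3}-z_{\Delta,K+2}=\tilde{z}_{K+1}-\tilde{z}_K$, a relation valid for \emph{all} $\Delta\in[\tilde{z}_K,\tilde{z}_{K+1})$. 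Letting $\Delta\uparrow\tilde{z}_{K+1}$ and invoking the continuity of $\Delta\mapsto z_{\Delta,K+2}$ and $\Delta\mapsto z_{\Delta,K+3}$ from Corollary~\ref{cor:contin} delivers exactly the displayed identity, and the proof is finished.

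I expect the main obstacle to be precisely the jump of the combinatorial index $j_\Delta$ at $\tilde{z}_1$ and $\tilde{z}_2$: as $\Delta$ crosses a break point the formula for $T$ switches from the pair of zeros $(z_{\Delta,K},z_{\Delta,K+2})$ to $(z_{\Delta,K+1},z_{\Delta,K+3})$, and it is not a priori clear that the two descriptions agree in the limit. The resolution above is that the phase alignment of the solution after its return to $\tilde{O}$ pins the gap $z_{\Delta,K+3}-z_{\Delta,K+2}$ to the constant value $\tilde{z}_{K+1}-\tilde{z}_K$ across the entire left interval, so that continuity of the individual zero maps extends the identity to the break point; no separate analysis of the degenerate configuration ``pulse onset exactly at a zero of $\tilde{x}$'' is required.
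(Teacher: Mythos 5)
Your proof is correct, and the overall skeleton (continuity on each interval where $j_\Delta$ is constant via Corollary~\ref{cor:contin}, plus a special argument at the break points $\tilde{z}_1,\tilde{z}_2$) matches the paper's. But your treatment of the break points is genuinely different. The paper proves an auxiliary fact (its Proposition~\ref{prop:4.3}): when $\Delta=\tilde{z}_J$ exactly, the pulse ends before $z_{\Delta,J+1}$, so the solution merges with $\tilde{x}$ one zero \emph{earlier} than in the generic case, in phase with $\tilde{z}_{J-1}$; this yields the alternative formula $T(\tilde{z}_J)=z_{\tilde{z}_J,J+1}-\tilde{z}_{J-1}$, which visibly agrees with the limit of the left-hand formula. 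You instead avoid any analysis of the degenerate configuration ``onset at a zero'': you extract the merging property $x^{(\Delta)}(z_{\Delta,K+2}+t)=\tilde{x}(\tilde{z}_K+t)$ from the definition of $T$ combined with Proposition~\ref{prop:prop4.2}, read off the gap identity $z_{\Delta,K+3}-z_{\Delta,K+2}=\tilde{z}_{K+1}-\tilde{z}_K$ on the whole left-adjacent interval, and push it to the endpoint using the continuity of the individual zero maps. Both arguments are sound. Yours is softer and more economical — it needs no new dynamical input beyond what Proposition~\ref{prop:prop4.2} and Corollary~\ref{cor:contin} already provide — whereas the paper's route yields a piece of information of independent interest, namely the exact return behaviour (return after only two additional zeros, counted from $\tilde{z}_{J-1}$) when the pulse starts precisely at a zero of the periodic solution.
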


\begin{proposition}\label{pr:prop4.4}
The maps $[0,\tilde{T})\ni\Delta\mapsto\overline{x}_{\Delta}\in\mathbb{R}$ and
$[0,\tilde{T})\ni\Delta\mapsto\underline{x}_{\Delta}\in\mathbb{R}$ are continuous.
\end{proposition}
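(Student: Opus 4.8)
The plan is to express $\overline{x}_\Delta$ and $\underline{x}_\Delta$ as the maximum and minimum of the jointly continuous function $(\Delta,t)\mapsto x^{(\Delta)}(t)$ over the interval $[\tilde z_J,z_{\Delta,J+2}]$, $J=j_\Delta$, and to reduce the claim to (i) a routine argument on the parameter ranges where the index $J$ is constant, and (ii) a separate check at the two values $\Delta=\tilde z_1$ and $\Delta=\tilde z_2$ where $J$ jumps. I will use freely the joint continuity of $(\Delta,t)\mapsto x^{(\Delta)}(t)$ established above, the continuity of each zero map $\Delta\mapsto z_{\Delta,j}$ (Corollary~\ref{cor:contin}), the identity $z_{\Delta,J}=\tilde z_J$, and the fact that consecutive zeros of $x^{(\Delta)}$ are spaced at distances larger than $\tau$. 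The elementary tool is the following: if $g\colon U\times[c,d]\to\mathbb{R}$ is continuous and $\beta\colon U\to[c,d]$ is continuous, then $u\mapsto\max_{c\le t\le\beta(u)}g(u,t)$ and $u\mapsto\min_{c\le t\le\beta(u)}g(u,t)$ are continuous, which follows from uniform continuity of $g$ on compact sets.

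On each of the three subintervals $[0,\tilde z_1)$, $(\tilde z_1,\tilde z_2)$, $(\tilde z_2,\tilde T)$ the index $J=j_\Delta$ is constant, so the left endpoint $\tilde z_J$ is fixed while the right endpoint $z_{\Delta,J+2}$ varies continuously. Applying the tool to $g(\Delta,t)=x^{(\Delta)}(t)$, after the fixed shift $t\mapsto t-\tilde z_J$, yields continuity of $\overline x_\Delta$ and $\underline x_\Delta$ on the interior of each piece (including one-sided continuity at $\Delta=0$).

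The substance is the behaviour at the junctions $\tilde z_1$ and $\tilde z_2$, where both endpoints of $[\tilde z_J,z_{\Delta,J+2}]$ jump because $J$ jumps and the cycle interval is shifted forward by one hump and one trough. The structural fact I rely on is that $[\tilde z_J,z_{\Delta,J+2}]$ always contains exactly one full positive hump $(z_{\Delta,2k+1},z_{\Delta,2k+2})$ and one full negative trough $(z_{\Delta,2k},z_{\Delta,2k+1})$, whence $\overline x_\Delta$ is the peak of that hump and $\underline x_\Delta$ the bottom of that trough. At $\Delta=\tilde z_1$ the relevant hump is $(z_{\Delta,1},z_{\Delta,2})$ on both sides of the junction, so $\overline x_\Delta$ is continuous there by joint continuity; the two relevant troughs differ, but by \eqref{e:3.8} the descent after any downward zero $z$ is $x(t)=-\beta_U+\beta_Ue^{-(t-z)}$ over a full delay $\tau$ --- here the spacing of zeros keeps the delayed argument of one sign and the pulse, whose onset sits at $\tilde z_1$, does not overlap this descent --- so every such trough attains minimum $-\beta_U(1-e^{-\tau})=\underline x$ (Corollary~\ref{cor:max-min-per}), and both one-sided limits of $\underline x_\Delta$ equal $\underline x$. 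The junction $\Delta=\tilde z_2$ is symmetric: the trough $(z_{\Delta,2},z_{\Delta,3})$ is common to both sides, giving continuity of $\underline x_\Delta$, while the two relevant humps each have peak $\beta_L(1-e^{-\tau})=\overline x$ by the analogous use of \eqref{e:3.9}.

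I expect the junction analysis to be the main obstacle: one must confirm that the hump (respectively trough) swapped into the cycle interval across the index jump carries the same extremal value as the one swapped out. This is precisely the step where joint continuity alone is insufficient and the explicit exponential form of the solution together with the slow-oscillation spacing must be invoked; the point to verify with care is that the pulse, with onset at the junction value $\tilde z_1$ or $\tilde z_2$, never overlaps the ascent or descent phase of the newly included hump or trough, so that this phase is governed by the unperturbed equation and reaches precisely $\overline x$ or $\underline x$.
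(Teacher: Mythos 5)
Your proposal is correct and follows essentially the same route as the paper's proof: continuity on the subintervals where $j_\Delta$ is constant, obtained from joint continuity of $(\Delta,t)\mapsto x^{(\Delta)}(t)$ together with continuity of the zero maps $\Delta\mapsto z_{\Delta,j}$, plus a one-sided analysis at the junctions $\tilde z_1$ and $\tilde z_2$. Your junction treatment --- the hump (resp.\ trough) common to both sides handled by joint continuity, and the swapped-in trough (resp.\ hump) shown to carry the unperturbed extremum $\underline x$ (resp.\ $\overline x$) because the pulse cannot overlap its descent (resp.\ ascent) phase --- is precisely the content of the paper's two cases $\tilde x'(\tilde z_J)>0$ and $\tilde x'(\tilde z_J)<0$, with the $\underline{x}_{\Delta}$ half left as ``analogous'' in the paper.
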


\section{Computation of the response}\label{sec:pert}

In the Subsections \ref{sec:rising}-\ref{sec:fall-rise} below we keep the parameters $\tau>0,\beta_L>0>-\beta_U, a>0,\sigma\in(0,\tau]$ fixed and  require $-\beta_U+a<0$ as in the preceding section, and study the behaviour of $x^{(\Delta)}$ depending on the onset of the pulse (at $t=\Delta$) and on its termination (at $t=\Delta+\sigma$)  relative to
the zeros and extrema
$$
0<\tilde{z}_1<\tilde{z}_1+\tau=t_{\max}<\tilde{z}_2<\tilde{z}_2+\tau=\tilde{T}<\tilde{z}_3,
$$
of the  periodic solution $\tilde{x}$, on the sign of $x^{(\Delta)}(\Delta+\sigma)$, and on the position of $x^{(\Delta)}(\Delta+\sigma)$ relative to the level $\beta_L$.

The computations that follow in this section can become quite difficult to keep track of, and we therefore use what we hope is a simple and transparent nomenclature to aid the reader in following our progression.  The reader may wish to consult Tables \ref{table:pos-a-pulse-summary}, \ref{table:pos-a-pulse-summary2} and \ref{table:last} as a way of keeping track of the result.

If the pulse starts at $\Delta\in [0,t_{\max})$, where the periodic solution is increasing,  then we say that we are in the rising phase and we use the letter \textbf{R}. If it starts at $\Delta\in [t_{\max},\tilde{T})$, where the periodic solution is decreasing,  then we say that we are in the falling phase and we use the letter \textbf{F}. If $x^{(\Delta)}(t)$ is negative at the beginning of the pulse, i.e., $x^{(\Delta)}(\Delta)<0$, then we use the letter  \textbf{N}  (negative value at $\Delta$)  and otherwise we write \textbf{P} (nonnegative  value at $\Delta$). We can thus say that we are in the subcase \textbf{RN} when we are at rising phase with a negative value at $\Delta$. Therefore the beginning of the pulse can be coded with two letters which gives four subcases: \textbf{RN}, \textbf{RP}, \textbf{FN}, \textbf{FP}. In the same way we can code the end of the pulse, namely if $\Delta+\sigma\in [t_{\max},\tilde{T})$, where the periodic solution is decreasing, then we are in the falling phase and we use the letter \textbf{F}, otherwise we write \textbf{R}. If $x^{(\Delta)}(\Delta+\sigma)<0$, then we use the letter  \textbf{N} and if $x^{(\Delta)}(\Delta+\sigma)\ge 0$ the letter  \textbf{P}.  Here similarly, we can have four subcases and we combine them  together to code each case with four letters. For example the case \textbf{RNRN} corresponds to the rising phase at $\Delta$ and at $\Delta+\sigma$ with negative values of $x^{(\Delta)}$ at $\Delta$ and at $\Delta+\sigma$.

There are three different periods of time that are important:  before the pulse occurs, during the pulse, and after the pulse.  
We can easily write down the values $x^{(\Delta)}(t)$ before the pulse in each phase. If $\Delta\in [0,t_{\max})$ then we have
\begin{equation*}
x^{(\Delta)}(t) =
  \beta_L + (\underline x - \beta_L) e^{-t } =\tilde{x}(t) \quad \text{for}\quad  t \in [ 0, \Delta]
  %\label{e:before}
\end{equation*}
and
the value of $x^{(\Delta)}$ when the pulse turns on is
$$
x^{(\Delta)}(\Delta) =
 \beta_L + (\underline x - \beta_L ) e^{-\Delta}
=\tilde{x}(\Delta).
$$
From the definition of $\underline x$ it follows that $ \beta_L -\underline x=\beta_Le^{\tilde{z}_1}$, which gives the following formula
\begin{equation}\label{e:risingxDelta}
x^{(\Delta)}(\Delta)=\beta_L - \beta_L  e^{\tilde{z}_1-\Delta}\quad \text{for}\quad \Delta\in [0,t_{\max}).
\end{equation}
If
$\Delta\in [t_{\max},\tilde{T})$ then
\begin{equation*}
x^{(\Delta)}(t) =
  -\beta_U + (\overline  x + \beta_U) e^{-(t- t_{\max})}=\tilde{x}(t)  \quad \text{for}\quad t \in [t_{\max},\Delta]
% \label{e:before-falling}
\end{equation*}
and
\[
x^{(\Delta)}(\Delta)= -\beta_U + (\overline  x + \beta_U) e^{-(\Delta- t_{\max})}.
\]
Since $\overline x +\beta_U=\beta_Ue^{\tilde{z}_2-t_{\max}}$, we obtain
\begin{equation}
x^{(\Delta)}(\Delta) = -\beta_U + \beta_U  e^{\tilde{z}_2-\Delta}\quad \text{for}\quad \Delta\in [t_{\max},\tilde{T}).
\label{e:x2+DbU}
\end{equation}

\subsection{A pulse during the rising phase}
\label{sec:rising}
We assume $0\le\Delta<\Delta+\sigma\le t_{\max}$. Then $x^{(\Delta)}(\Delta)$ is given by \eqref{e:risingxDelta}. During the pulse,
\begin{equation}
   x^{(\Delta)}(t) =  (\beta_L + a) + \bigl(x^{(\Delta)}(\Delta) - (\beta_L + a)\bigr) e^{-(t- \Delta)}   \quad \mbox{for} \quad t \in [  \Delta,   \Delta + \sigma], \label{e:during}
\end{equation}
and
after the pulse,
\begin{align}
x^{(\Delta)}(t) & =      \beta_L + (x^{(\Delta)}(\Delta + \sigma) - \beta_L) e^{-\left(t-(  \Delta + \sigma)\right)}\nonumber\\
&  \mbox{for} \quad t>\Delta + \sigma\quad\mbox{as long as}\quad x^{(\Delta)}(t-\tau)<0, \label{e:after}
\end{align}
with
$$
x^{(\Delta)}( \Delta + \sigma) =
\beta_L + a + \bigl(x^{(\Delta)}(\Delta) - (\beta_L + a)\bigr) e^{-\sigma}.
$$
Using \eqref{e:risingxDelta} we have
\begin{equation}\label{e:risingxDeltasigma}
x^{(\Delta)}(\Delta+\sigma)=\beta_L - \beta_L  e^{\tilde{z}_1-\Delta-\sigma}+a(1-e^{-\sigma}).
\end{equation}

\begin{description}
\item[Case $\mathbf{RNR}$] The pulse starts before $\tilde{z}_1=t_{\max}-\tau$, $ 0\le\Delta <\tilde{z}_1$.
\end{description}

Observe first that  $\Delta\in [0,\tilde{z}_1)$ is such that $x^{(\Delta)}(\Delta+\sigma)<0$ if and only if
\[
\beta_L+a(1-e^{-\sigma})< \beta_L  e^{\tilde{z}_1-\Delta-\sigma}
\]
which in view of $\beta_L+a(1-e^{-\sigma})>0$ is equivalent to
\[
e^{\Delta}< \dfrac{\beta_L  e^{\tilde{z}_1-\sigma}}{\beta_L+a(1-e^{-\sigma})}.
\]
Let us define
\begin{equation}\label{d:delta1}
\delta_1=\ln \dfrac{\beta_L  e^{\tilde{z}_1-\sigma}}{\beta_L+a(1-e^{-\sigma})}=\ln\dfrac{\beta_L+\beta_U(1-e^{-\tau})}{\beta_Le^{\sigma}+a(e^{\sigma}-1)}.
\end{equation}
We have
\[
\delta_1=\tilde{z}_1-\sigma-\ln\dfrac{\beta_L+a(1-e^{-\sigma})}{\beta_L}<\tilde{z}_1.
\]
If $\delta_1>0$ we obtain $x^{(\Delta)}(\Delta+\sigma)<0$ for $0\le\Delta<\delta_1$ and $x^{(\Delta)}(\Delta+\sigma)\ge 0$ for $\Delta\in[\delta_1,\tilde{z}_1)$ while if $\delta_1\le 0$ we have $x^{(\Delta)}(\Delta+\sigma)\ge 0$ for all $\Delta\in [0,\tilde{z}_1)$. We consider three subcases.

\begin{description}
\item[Case $\mathbf{RNRN}$] The pulse parameters $(a,\Delta,\sigma)$  are such that $x^{(\Delta)}(t)$ remains negative during the pulse, $x^{(\Delta)}(\Delta+\sigma)<0$. Equivalently, $\delta_1>0$ and
    $\Delta\in[0,\delta_1)= I_{RNRN}$.
\end{description}

\begin{proposition}\label{c:prop1}
If $\Delta\in I_{RNRN}=[0,\delta_1)$ then  $\underline{x}_{\Delta}=\underline{x}$,    $\overline{x}_{\Delta}=\overline{x}$,  and
\begin{equation}
T(\Delta)= \tilde{T}+\ln\left(1-\dfrac{a(e^{\sigma}-1)}{\beta_L}e^{\Delta-\tilde{z}_1}\right)<\tilde{T}.
\label{eqn:prop5.1}
\end{equation}
In particular, the restriction of the map $T$ to $I_{RNRN}$  is strictly decreasing.
\end{proposition}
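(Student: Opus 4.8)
The plan is to follow $x^{(\Delta)}$ explicitly through the three phases (before, during, after the pulse), locate its first two positive zeros $z_{\Delta,1},z_{\Delta,2}$ together with its extrema on the return window, and then invoke Proposition~\ref{prop:prop4.2} to read off $T(\Delta)$. Since $0\le\Delta<\delta_1<\tilde{z}_1$ and $\sigma\le\tau$, for $t\in[\Delta,\Delta+\sigma]$ the delayed argument $t-\tau$ lies in $(-\tau,\tilde{z}_1)$, where $\tilde{x}<0$; hence $f(x^{(\Delta)}(t-\tau))=\beta_L$ and \eqref{e:during} applies. Its right-hand side increases monotonically toward $\beta_L+a>0$, so $x^{(\Delta)}$ rises on the pulse, and the defining \textbf{RNRN} hypothesis $x^{(\Delta)}(\Delta+\sigma)<0$ (equivalently $\Delta<\delta_1$, via \eqref{e:risingxDeltasigma}) forces $x^{(\Delta)}<0$ on all of $[\Delta,\Delta+\sigma]$. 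Together with the unperturbed behaviour on $[-\tau,\Delta]$ (decrease to $\underline{x}$ at $t=0$, then increase), this shows $x^{(\Delta)}<0$ on $(-\tau,z_{\Delta,1})$ and that the minimum over the return window is attained at $t=0$, giving $\underline{x}_{\Delta}=\underline{x}$.

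\textbf{After the pulse: maximum and phase locking.} For $t>\Delta+\sigma$ formula \eqref{e:after} holds as long as the delayed value is negative, and since $x^{(\Delta)}<0$ on $(-\tau,z_{\Delta,1})$ this branch is valid up to $z_{\Delta,1}$; solving $x^{(\Delta)}(z_{\Delta,1})=0$ gives $z_{\Delta,1}=\Delta+\sigma+\ln\frac{\beta_L-x^{(\Delta)}(\Delta+\sigma)}{\beta_L}$. The delayed value stays negative for a further time $\tau$, so on $[z_{\Delta,1},z_{\Delta,1}+\tau]$ the solution is still governed by $x'=-x+\beta_L$ starting from $0$, reaching $x^{(\Delta)}(z_{\Delta,1}+\tau)=\beta_L(1-e^{-\tau})=\overline{x}$; beyond that point the delayed value turns positive and the solution decreases, so $\overline{x}_{\Delta}=\overline{x}$. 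Crucially, the segment $x^{(\Delta)}_{z_{\Delta,1}+\tau}$ coincides with $\tilde{x}_{\tilde{z}_1+\tau}$, so by Theorem~\ref{t:3.3} (equivalently by uniqueness for the unperturbed initial value problem) one has $x^{(\Delta)}(t)=\tilde{x}(t-z_{\Delta,1}+\tilde{z}_1)$ for all $t\ge z_{\Delta,1}$; reading off the next zero of $\tilde{x}$ yields $z_{\Delta,2}=z_{\Delta,1}+(\tilde{z}_2-\tilde{z}_1)$.

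\textbf{Cycle length, bound, and monotonicity.} By Proposition~\ref{prop:prop4.2} with $J=0$ (so $z_{\Delta,0}=\tilde{z}_0=-\tau$) we get $T(\Delta)=z_{\Delta,2}+\tau=\tilde{T}+(z_{\Delta,1}-\tilde{z}_1)$. Substituting $x^{(\Delta)}(\Delta+\sigma)$ from \eqref{e:risingxDeltasigma} into the expression for $z_{\Delta,1}$ and factoring $e^{\tilde{z}_1-\Delta-\sigma}$ out of the logarithm (using $(1-e^{-\sigma})e^{\sigma}=e^{\sigma}-1$) gives the identity $z_{\Delta,1}-\tilde{z}_1=\ln\bigl(1-\frac{a(e^{\sigma}-1)}{\beta_L}e^{\Delta-\tilde{z}_1}\bigr)$, which is exactly \eqref{eqn:prop5.1}. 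The argument of the logarithm is positive precisely because $x^{(\Delta)}(\Delta+\sigma)<0$ (which is what makes $z_{\Delta,1}$ exist), and it is $<1$ because $a(e^{\sigma}-1)>0$, so $T(\Delta)<\tilde{T}$. Since the quantity $\frac{a(e^{\sigma}-1)}{\beta_L}e^{\Delta-\tilde{z}_1}$ is strictly increasing in $\Delta$ and stays in $(0,1)$ on $I_{RNRN}$, the logarithm and hence $T$ is strictly decreasing there.

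\textbf{Main obstacle.} The delicate part is the bookkeeping of the sign of the delayed argument $x^{(\Delta)}(\cdot-\tau)$ at each stage, since this is what selects the branch of $f$ (and thus the governing exponential). One must verify that the solution stays negative throughout $(-\tau,z_{\Delta,1})$, that the delayed value then remains negative for the full delay $\tau$ after $z_{\Delta,1}$ so the rise continues up to \emph{exactly} $\overline{x}$, and that the resulting segment lands precisely on the periodic orbit so that the post-pulse solution is a pure time-shift of $\tilde{x}$. This last point is what makes $z_{\Delta,2}$ explicitly computable and simultaneously forces both extrema to be unchanged.
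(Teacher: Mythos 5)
Your proposal is correct and follows essentially the same route as the paper's proof: explicit computation through the three phases, the observation that $x^{(\Delta)}$ stays negative up to its first zero $z_{\Delta,1}=\Delta+\sigma+\ln\frac{\beta_L-x^{(\Delta)}(\Delta+\sigma)}{\beta_L}$, the phase-locking $x^{(\Delta)}(z_{\Delta,1}+t)=\tilde{x}(\tilde{z}_1+t)$ for $t\ge0$ (whence both extrema are unchanged), and Proposition~\ref{prop:prop4.2} with $J=0$ to get $T(\Delta)=\tilde{T}+(z_{\Delta,1}-\tilde{z}_1)$ followed by the substitution of \eqref{e:risingxDeltasigma}. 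The only cosmetic difference is that the paper reaches the final formula by comparing the parallel expressions for $z_{\Delta,1}$ and $\tilde{z}_1$ (writing $T(\Delta)=\tilde{T}+\ln\frac{\beta_L-x^{(\Delta)}(\Delta+\sigma)}{\beta_L-\tilde{x}(\Delta+\sigma)}$), while you substitute directly into the $z_{\Delta,1}$ formula — algebraically the same step — and your citation of Theorem~\ref{t:3.3} for the locking is unnecessary (plain uniqueness under the semiflow, as your parenthetical notes, is the right justification).
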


\begin{figure}[htb]
\centering

\includegraphics{pcr-2.mps}

\caption{A schematic representation of the solution of the DDE when  $0\le x^{(\Delta)}(\Delta+\sigma)\le\beta_L$. The unperturbed periodic solution $\tilde x$ is the solid line  and the solution $x^{(\Delta)}$ is the dashed line.}
\label{fig:bl-a2}
\end{figure}
We now consider the case  \textbf{RNRP} when $x^{(\Delta)}(\Delta+\sigma)\ge 0$. From Figure~\ref{fig:bl-a2} we expect that the first local maximum of $x^{(\Delta)}$ after $t=\Delta$ is achieved before  $t=t_{\max}$ and is not smaller than $\overline{x}$. In the following we prove this.  Also we shall obtain a result about the cycle length $T(\Delta)$ of $x^{(\Delta)}$. However this time we can not conclude that either $T(\Delta) > \tilde{T}$ or $T(\Delta) < \tilde{T}$,  see Figures~\ref{fig:bl-a2} and~\ref{fig:bl-a2b}, respectively.
\begin{figure}[htb]
\centering

\includegraphics{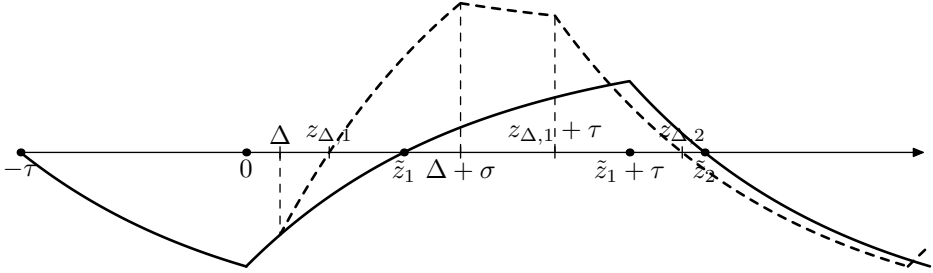}

\caption{A schematic representation of the solution of the DDE when  $x^{(\Delta)}(\Delta+\sigma)>\beta_L$. As usual, the unperturbed periodic solution $\tilde x$ is the solid line  and the solution $x^{(\Delta)}$ is the dashed line.}
\label{fig:bl-a2b}
\end{figure}

From \eqref{e:during} in combination with $x^{(\Delta)}(\Delta)<0<\beta_L+a$ we see that $x^{(\Delta)}$ is strictly increasing on $[\Delta,\Delta+\sigma]$. So by $0\le x^{(\Delta)}(\Delta+\sigma)$ we obtain a first positive zero $z_{\Delta,1}$ of $x^{(\Delta)}$, and
$$
\Delta<z_{\Delta,1}\le\Delta+\sigma.
$$

\begin{proposition}\label{c:prop2} If $\Delta\in [\max\{0,\delta_1\},\tilde{z}_1)$ then $T(\Delta)<\infty$,  $\underline{x}_{\Delta}=\underline{x}$,
$
\overline{x}_{\Delta}\ge\overline{x}
$,
and
\begin{equation}
T(\Delta)=\tilde{T}+\ln\left(1+\dfrac{a(e^{\sigma}-1)}{\beta_U} e^{\Delta-\tilde{z}_2}+\dfrac{a(\beta_L+\beta_U)e^{\tau+\tilde{z}_1-\tilde{z}_2}}{\beta_U(\beta_L+a)}(e^{\Delta-\tilde{z}_1}-1)\right).
\label{eqn:prop5.2}
\end{equation}
Moreover, the maximal value $\overline{x}_{\Delta}$ is given by  $\max\{x^{(\Delta)}(z_{\Delta,1}+\tau),x^{(\Delta)}(\Delta+\sigma)\}$ and is strictly increasing  with respect to $\Delta\in  [\max\{0,\delta_1\},\tilde{z}_1)$.
\end{proposition}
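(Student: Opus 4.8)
The plan is to track $x^{(\Delta)}$ through the successive linear régimes it visits, read off the extrema and the second zero $z_{\Delta,2}$ directly, and then convert $z_{\Delta,2}$ into a cycle length via Proposition~\ref{prop:prop4.2}. Since $\Delta\in[\max\{0,\delta_1\},\tilde z_1)\subset[\tilde z_0,\tilde z_1)$ with $\tilde z_0=-\tau$, the index of the last zero shared with $\tilde x$ is $J=j_\Delta=0$, so Proposition~\ref{prop:prop4.2} gives $T(\Delta)=z_{\Delta,2}-\tilde z_0=z_{\Delta,2}+\tau<\infty$. (Finiteness also follows conceptually from Theorem~\ref{t:3.3}: once the pulse ends, $\chi=x^{(\Delta)}_{\Delta+\sigma}\in Z_0$ carries a single transversal zero $z_{\Delta,1}$ in its history, so the solution merges with $\tilde O$ in finite time.) By \eqref{e:during} together with $x^{(\Delta)}(\Delta)=\tilde x(\Delta)<0<\beta_L+a$ the solution increases strictly on $[\Delta,\Delta+\sigma]$, and the hypothesis that $\Delta$ lies in $[\max\{0,\delta_1\},\tilde z_1)$, equivalently $x^{(\Delta)}(\Delta+\sigma)\ge0$, produces a first positive zero $z_{\Delta,1}\in(\Delta,\Delta+\sigma]$.

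Next I would describe the excursion after $z_{\Delta,1}$. Because $z_{\Delta,1}\le\Delta+\sigma$ and $\sigma\le\tau$, the delayed argument stays negative on $[z_{\Delta,1},z_{\Delta,1}+\tau]$, so there $x^{(\Delta)}$ obeys $x'=-x+\beta_L+a$ while the pulse is on and $x'=-x+\beta_L$ afterwards, reaching $M:=x^{(\Delta)}(z_{\Delta,1}+\tau)$; at $z_{\Delta,1}+\tau$ the delayed value turns nonnegative, the régime becomes $x'=-x-\beta_U$, and the solution falls monotonically to $0$ at $z_{\Delta,2}$. Chaining the two régimes from the zero at $z_{\Delta,1}$ and writing $u=\Delta+\sigma-z_{\Delta,1}\in[0,\tau]$ gives the identity
$$
M=\overline x+a\,e^{-\tau}\bigl(e^{u}-1\bigr)\ge\overline x.
$$
On $[\tilde z_0,z_{\Delta,2}]$ the solution is negative exactly on $(-\tau,z_{\Delta,1})$, where it decreases from $0$ to $\underline x$ at $t=0$ and then increases monotonically back to $0$; it stays nonnegative on $[z_{\Delta,1},z_{\Delta,2}]$. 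Hence $\underline x_\Delta=\underline x$. The peak of the excursion sits at $z_{\Delta,1}+\tau$ when the post-pulse segment rises ($x^{(\Delta)}(\Delta+\sigma)\le\beta_L$, Figure~\ref{fig:bl-a2}) and at $\Delta+\sigma$ when it falls ($x^{(\Delta)}(\Delta+\sigma)>\beta_L$, Figure~\ref{fig:bl-a2b}); in both cases $\overline x_\Delta=\max\{M,x^{(\Delta)}(\Delta+\sigma)\}\ge M\ge\overline x$.

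For strict monotonicity of $\Delta\mapsto\overline x_\Delta$ it suffices that both candidate functions increase. Solving $x^{(\Delta)}(z_{\Delta,1})=0$ inside the pulse with \eqref{e:during} and \eqref{e:risingxDelta} gives $z_{\Delta,1}=\Delta+\ln\frac{a+\beta_L e^{\tilde z_1-\Delta}}{\beta_L+a}$, whence $u=\sigma+\ln\frac{\beta_L+a}{a+\beta_L e^{\tilde z_1-\Delta}}$ is strictly increasing in $\Delta$; so is $M=\overline x+ae^{-\tau}(e^{u}-1)$. By \eqref{e:risingxDeltasigma}, $x^{(\Delta)}(\Delta+\sigma)=\beta_L-\beta_L e^{\tilde z_1-\Delta-\sigma}+a(1-e^{-\sigma})$ is strictly increasing as well, and the maximum of two strictly increasing continuous functions is strictly increasing.

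It remains to compute $T(\Delta)$. Integrating $x'=-x-\beta_U$ from the value $M$ at $z_{\Delta,1}+\tau$ down to the zero at $z_{\Delta,2}$ gives $z_{\Delta,2}=z_{\Delta,1}+\tau+\ln\frac{M+\beta_U}{\beta_U}$, so
$$
T(\Delta)=z_{\Delta,2}+\tau=z_{\Delta,1}+2\tau+\ln\frac{M+\beta_U}{\beta_U}.
$$
One then substitutes $M$, $z_{\Delta,1}$ and $u$, and simplifies using $\overline x+\beta_U=\beta_U e^{\tilde z_2-\tilde z_1-\tau}$ and $\beta_L-\underline x=\beta_L e^{\tilde z_1}$ from Corollary~\ref{cor:max-min-per} together with $\tilde T=\tilde z_2+\tau$. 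The main obstacle is exactly this bookkeeping. Concretely I would split $\ln\frac{M+\beta_U}{\beta_U}=(\tilde z_2-\tilde z_1-\tau)+\ln\bigl(1+\tfrac{a(e^{u}-1)}{\beta_U}e^{\tilde z_1-\tilde z_2}\bigr)$ and absorb $z_{\Delta,1}-\tilde z_1$ into the surviving logarithm; the two summands inside the logarithm of \eqref{eqn:prop5.2} then arise from the $e^{\sigma}$-part of $e^{u}$ and from the gap $a+\beta_L e^{\tilde z_1-\Delta}$ between the pulse asymptote $\beta_L+a$ and $\beta_L$. No new idea is needed, and the boundary value $\Delta=\delta_1$ (where $u=0$, $M=\overline x$) matches the \textbf{RNRN} formula \eqref{eqn:prop5.1} by the continuity of Corollary~\ref{cor:cor4.2}.
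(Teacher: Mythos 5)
Your proposal is correct and takes essentially the same route as the paper's proof: the first-zero relation $(\beta_L+a)e^{z_{\Delta,1}}=\beta_Le^{\tilde z_1}+ae^{\Delta}$, the peak value $M=\overline x+ae^{-\tau}\bigl(e^{\sigma+\Delta-z_{\Delta,1}}-1\bigr)$, the two-candidate maximum $\max\{x^{(\Delta)}(\Delta+\sigma),x^{(\Delta)}(z_{\Delta,1}+\tau)\}$ with its monotonicity, and the conversion $T(\Delta)=z_{\Delta,2}+\tau$ via $\beta_Ue^{z_{\Delta,2}}=(M+\beta_U)e^{z_{\Delta,1}+\tau}$ are exactly the paper's intermediate steps \eqref{d:rez1p} and \eqref{e:xtmax}. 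The final simplification you leave as ``bookkeeping'' does close precisely as you sketch it: splitting $\ln\frac{M+\beta_U}{\beta_U}$, absorbing $z_{\Delta,1}-\tilde z_1$, and using \eqref{d:rez1p} together with the identity \eqref{e:betaszeros}, $\beta_Le^{\tilde z_1}+\beta_Ue^{\tilde z_2}=(\beta_L+\beta_U)e^{\tau+\tilde z_1}$, reproduces \eqref{eqn:prop5.2}, which is how the paper finishes as well.
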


\begin{description}
\item[Case $\mathbf{RPRP}$] The pulse occurs completely in the interval $[\tilde{z}_1, t_{\max}]$. This is equivalent to $\Delta\in[\tilde{z}_1,t_{\max}-\sigma]= I_{RPRP}$.
\end{description}

\begin{proposition}\label{c:prop3}
If   $\Delta\in [\tilde{z}_{1},t_{\max}-\sigma]$ then $T(\Delta)<\infty$, $\underline{x}_{\Delta}=\underline{x}$, $ \overline{x}_{\Delta}> \overline{x}$,
and
\begin{equation}\label{e:TPbetaLB}
T(\Delta)=\tilde{T}+\ln\left(1+\dfrac{a(e^{\sigma}-1)}{\beta_U}e^{
\Delta-\tilde{z}_2}\right)>\tilde{T}.
\end{equation}
Moreover, the map $\overline{x}_{\Delta}$ is strictly increasing  on $I_{RPRP}=[\tilde{z}_{1},t_{\max}-\sigma]$.
\end{proposition}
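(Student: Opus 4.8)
The plan is to track $x=x^{(\Delta)}$ through the regimes it visits on one cycle and to read off the extrema and the return zero directly. Because $\Delta\in[\tilde z_1,t_{\max}-\sigma]$ places the entire pulse in the rising phase, we have $x(\Delta)=\tilde x(\Delta)\ge0$, and \eqref{e:risingxDeltasigma} gives $x(\Delta+\sigma)>0$. Since $\Delta\ge\tilde z_1$, the first positive zero is unperturbed, $z_{\Delta,1}=\tilde z_1$, and the delayed value $x(t-\tau)$ is negative exactly for $t<t_{\max}=\tilde z_1+\tau$. Hence $x$ obeys the rising equation $x'=-x+\beta_L$ on $[\Delta+\sigma,t_{\max}]$ via \eqref{e:after}, switches to $x'=-x-\beta_U$ at $t_{\max}$, and stays there until the delayed value changes sign again at $z_{\Delta,2}+\tau$.

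First I would locate the maximum. During the pulse \eqref{e:during} makes $x$ strictly increasing toward $\beta_L+a$, and on $[\Delta+\sigma,t_{\max}]$ the rising equation keeps $x$ monotone toward $\beta_L$; therefore $\overline x_\Delta=\max\{x(\Delta+\sigma),M\}$ where $M:=x(t_{\max})$. Comparing with $\tilde x$ on $[\Delta+\sigma,t_{\max}]$, where both solve the same rising equation and $x(\Delta+\sigma)-\tilde x(\Delta+\sigma)=a(1-e^{-\sigma})>0$ by \eqref{e:risingxDeltasigma}, the order-preserving scalar flow yields $M>\tilde x(t_{\max})=\overline x$, so $\overline x_\Delta\ge M>\overline x$.

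Next I would compute the zeros. Solving the falling equation from $(t_{\max},M)$ gives $z_{\Delta,2}=t_{\max}+\ln\frac{M+\beta_U}{\beta_U}$, and since $x(z_{\Delta,2})=0$ the value at $z_{\Delta,2}+\tau$ is $-\beta_U+\beta_Ue^{-\tau}=\underline x$; as $x\ge0$ on $[\tilde z_1,z_{\Delta,2}]$ and the falling branch is decreasing, this is the global minimum on the cycle, so $\underline x_\Delta=\underline x$. A direct check shows the segment $x_{z_{\Delta,2}+\tau}$ coincides with $\tilde x_0$, so $x$ merges with the translate $t\mapsto\tilde x(t-z_{\Delta,2}-\tau)$; in particular $T(\Delta)<\infty$, the return zero is $z_{\Delta,3}=z_{\Delta,2}+\tau+\tilde z_1$, and by Proposition \ref{prop:prop4.2} (with $J=1$) $T(\Delta)=z_{\Delta,3}-\tilde z_1=z_{\Delta,2}+\tau$. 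Writing $M-\overline x=a(e^\sigma-1)e^{\Delta-t_{\max}}$ and using $\overline x+\beta_U=\beta_Ue^{\tilde z_2-t_{\max}}$ from Corollary \ref{cor:max-min-per}, the difference $T(\Delta)-\tilde T=z_{\Delta,2}-\tilde z_2=\ln\frac{M+\beta_U}{\overline x+\beta_U}$ collapses to \eqref{e:TPbetaLB}, which exceeds $\tilde T$. For monotonicity, both $x(\Delta+\sigma)$ (from \eqref{e:risingxDeltasigma}) and $M=\overline x+a(e^\sigma-1)e^{\Delta-t_{\max}}$ are strictly increasing in $\Delta$, hence so is their maximum $\overline x_\Delta$.

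The hard part will be the bookkeeping of the switching times rather than any deep idea: I must verify that the solution re-merges \emph{exactly} at $z_{\Delta,2}+\tau$, i.e.\ that the minimum is precisely $\underline x$ and the full history segment equals $\tilde x_0$, since this is what guarantees finiteness of $T(\Delta)$ and identifies $z_{\Delta,3}$ as the return zero used in Proposition \ref{prop:prop4.2}. A secondary nuisance is that the location of the maximum genuinely depends on the sign of $x(\Delta+\sigma)-\beta_L$; I sidestep a case split by using only $\overline x_\Delta\ge M>\overline x$ for the strict inequality and by treating $\overline x_\Delta=\max\{x(\Delta+\sigma),M\}$ as a maximum of two increasing functions for the monotonicity claim.
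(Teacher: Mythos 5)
Your proof is correct and follows essentially the same route as the paper: explicit tracking of $x^{(\Delta)}$ through the pre-pulse/pulse/post-pulse rising regimes and then the falling branch, identifying $\overline{x}_{\Delta}=\max\{x^{(\Delta)}(\Delta+\sigma),x^{(\Delta)}(t_{\max})\}$, locating $z_{\Delta,2}$, verifying the merge with $\tilde{x}$ at $z_{\Delta,2}+\tau$, and the same algebra for $T(\Delta)$. The only cosmetic differences are that you sidestep the paper's explicit $\mathbf{RPRP1}$/$\mathbf{RPRP2}$ case split by observing that $x^{(\Delta)}$ is monotone toward $\beta_L$ on $[\Delta+\sigma,t_{\max}]$, and you organize the final computation as the log-ratio $\ln\bigl((M+\beta_U)/(\overline{x}+\beta_U)\bigr)$ rather than via $\beta_U e^{z_{\Delta,2}}$.
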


\begin{remark}\label{rem:rem1}
For $\delta_1>0$ and $\Delta$ close to $\delta_1$ we have $T(\Delta)<\tilde{T}$.
\end{remark}

\subsection{A pulse from the rising phase into the falling phase}
\label{sec:rise-fall}
Here we assume $\Delta\le t_{\max}<\Delta+\sigma$. Then $x^{(\Delta)}(\Delta)$ is still given by \eqref{e:risingxDelta}. Since $\sigma\le \tau$, we must have $\Delta> \tilde{z}_1$, thus $x^{(\Delta)}(\Delta)>0$.
The largest zero of $\tilde{x}$ in $(-\infty,\Delta]$ is $\tilde{z}_1$. Also, with $t_{\max}=\tilde{z}_1+\tau$,
\begin{align*}
 x^{(\Delta)}(t_{\max})&=\beta_L+a+\bigl(x^{(\Delta)}(\Delta)-(\beta_L+a)\bigr)e^{-(
t_{\max}-\Delta)}\\
 & = \beta_L-\beta_Le^{-\tau}+a\bigl(1-e^{-(
t_{\max}-\Delta)}\bigr)\\
 & =  \overline x + a(1-e^{\Delta-
t_{\max}})\ge\overline{x},
\end{align*}
and
we can write
\[
x^{(\Delta)}(t)=-\beta_U+a+\bigl(x^{(\Delta)}(t_{\max})+\beta_U-a\bigr)e^{-(
t-t_{\max})}\quad\text{for}\quad t\in [t_{\max},\Delta+\sigma].
\]
Observe that the function $[t_{\max},\Delta+\sigma)\ni t\mapsto x^{(\Delta)}(t)\in\mathbb{R}$ is decreasing since  $x^{(\Delta)}(t_{\max})+\beta_U-a\ge 0$.

We have
\begin{align*}
x^{(\Delta)}(\Delta+\sigma) & =  -\beta_U+a+\bigl(x^{(\Delta)}(t_{\max})+\beta_U-a\bigr)e^{-(
\Delta+\sigma-t_{\max})}\\
 & =  -\beta_U+a+\bigl(\beta_U e^{\tilde{z}_2-t_{\max}}-ae^{\Delta-
t_{\max}}\bigr)e^{-(
\Delta+\sigma-t_{\max})},
\end{align*}
which gives
\begin{equation}\label{e:xprf}
x^{(\Delta)}(\Delta+\sigma)=-\beta_U+\beta_Ue^{\tilde{z}_2-(
\Delta+\sigma)} +a(1-e^{-\sigma}).
\end{equation}
Using
$$
\tilde{z}_1+\tau<\Delta+\sigma\le\tilde{z}_1+\tau+\sigma<\tilde{z}_2+\tau
$$
we have
$$
x^{(\Delta)}(t)=-\beta_U+\bigl(x^{(\Delta)}(\Delta+\sigma)+\beta_U\bigr)e^{-\left(t-(\Delta+\sigma)\right)}\quad\text{for}\quad \Delta + \sigma \le t\le\tilde{z}_2+\tau
$$
and in particular
\[
x^{(\Delta)}(\Delta+\sigma)=\tilde{x}(\Delta+\sigma)+a(1-e^{-\sigma}),
\]
which yields $x^{(\Delta)}(\Delta+\sigma)>\tilde{x}(\Delta+\sigma)$.

We say that we are in
\begin{description}
\item[Case $\mathbf{RPF}$]
We distinguish between the subcases
\begin{description}
\item[P] $x^{(\Delta)}(\Delta+\sigma)\ge 0$, see Figure~\ref{fig:bl-b---bu-c1---I},
\item[N] $x^{(\Delta)}(\Delta+\sigma)<0$, see Figure~\ref{fig:bl-b---bu-c1---II}.
\end{description}
\end{description}

\begin{figure}[htb]
\centering

\includegraphics{pcr-3.mps}

\caption{A schematic representation of the solution of the DDE for the case $\mathbf{RPFP}$. The unperturbed limit cycle is the solid line while the solution with the pulse is the dashed line.}
\label{fig:bl-b---bu-c1---I}
\end{figure}

\begin{figure}[htb]
\centering

\includegraphics{pcr-4.mps}

\caption{A schematic representation of the solution of the DDE for the case $\mathbf{RPFN}$.  The unperturbed limit cycle is the solid line while the solution with the pulse is the dashed line.}
\label{fig:bl-b---bu-c1---II}
\end{figure}

Note that $\Delta\in (t_{\max}-\sigma,t_{\max}]$ is such that $x^{(\Delta)}(\Delta+\sigma)\ge 0$ if and only if
\[
\beta_Ue^{\tilde{z}_2-(
\Delta+\sigma)}\ge \beta_U-a(1-e^{-\sigma}).
\]
Since  $ \beta_U>a\ge a(1-e^{-\sigma})$, we have  $
\beta_U-a(1-e^{-\sigma})>0,
$
which implies that  $x^{(\Delta)}(\Delta+\sigma)\ge 0$ if and only if
\[
e^{\Delta}\le \dfrac{\beta_Ue^{\tilde{z}_2}}{\beta_Ue^{\sigma}-a(e^{\sigma}-1)}.
\]
Let us define $\delta_2$ by
\begin{equation}\label{d:delta2}
\delta_2=\ln\dfrac{\beta_Ue^{\tilde{z}_2-\sigma}}{\beta_U-a(1-e^{-\sigma})}.
\end{equation}
We conclude that the $\Delta$-intervals in the two subcases are of the form
\begin{align*}
I_{RPFP} & =  (t_{\max}-\sigma,t_{\max}]\cap(-\infty,\delta_2]\\
 \text{and} & \\
I_{RPFN} &  =  (t_{\max}-\sigma,t_{\max}]\cap(\delta_2,\infty).
\end{align*}

We have
\begin{equation*}
\delta_2=\tilde{z}_2-\sigma+\ln\dfrac{\beta_U}{\beta_U-a(1-e^{-\sigma})}.
\end{equation*}
Notice that
$
\delta_2>\tilde{z}_2-\sigma,
$
which implies that
$
\delta_2>t_{\max}-\sigma
$.
Consequently, if
\[
\beta_Ue^{\sigma}-a(e^{\sigma}-1)\le \beta_U+\beta_L(1-e^{-\tau}),
\]
which is equivalent to $\delta_2\ge t_{\max}$,
then $x^{(\Delta)}(\Delta+\sigma)\ge 0$ for all $\Delta\in (t_{\max}-\sigma,t_{\max}]$.
If the reverse inequality
\begin{equation}\label{e:betaLBbetaUII}
 \beta_U+\beta_L(1-e^{-\tau})<\beta_Ue^{\sigma}-a(e^{\sigma}-1)
\end{equation}
holds then $\delta_2<t_{\max}$ and $\delta_2$ is the maximal value of $\Delta\in (t_{\max}-\sigma,t_{\max}]$ such that
$x^{(\Delta)}(t)$ changes the sign at $\Delta+\sigma$, from  a positive value to a negative value, $x^{(\Delta)}(\Delta+\sigma)\ge 0$ for all $\Delta\in (t_{\max}-\sigma,\delta_2]$ and $x^{(\Delta)}(\Delta+\sigma)< 0$ for all $\Delta\in(\delta_2,t_{\max}]$.
Inequality \eqref{e:betaLBbetaUII} can be rewritten as
\[
\beta_L(1-e^{-\tau})<(\beta_U-a)(e^{\sigma}-1).
\]

\begin{proposition} \label{c:prop4}
If $\Delta\in I_{RPFP}$  then $T(\Delta)<\infty$, $\underline{x}_{\Delta}=\underline{x}$, $\overline{x}_{\Delta}>\overline{x}$,
and $T(\Delta)$ is  given by formula \eqref{e:TPbetaLB} as in Proposition \ref{c:prop3}. The map $I_{RPFP} \ni\Delta\mapsto \overline{x}_{\Delta}\in\mathbb{R}$ is strictly decreasing and $I_{RPFP} \ni\Delta\mapsto T(\Delta)\in\mathbb{R}$ is strictly increasing.
\end{proposition}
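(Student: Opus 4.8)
The plan is to trace $x^{(\Delta)}$ through the successive regimes it enters after onset, in each of which a single branch of $f$ is active, to locate its unique local maximum and minimum explicitly, and then to obtain the cycle length from Proposition~\ref{prop:prop4.2}. Since $\tilde z_1\le\Delta<\tilde z_2$ we have $J=j_\Delta=1$, so Proposition~\ref{prop:prop4.2} reduces the cycle length to $T(\Delta)=z_{\Delta,3}-\tilde z_1$. The profile on the pulse interval is already assembled in Section~\ref{sec:rise-fall}: on $[\Delta,t_{\max}]$ the delayed argument is negative, the equation is $x'=-x+\beta_L+a$, and $x^{(\Delta)}$ rises to $x^{(\Delta)}(t_{\max})=\overline x+a(1-e^{\Delta-t_{\max}})$; on $[t_{\max},\Delta+\sigma]$ the delayed value is nonnegative and $x^{(\Delta)}$ decreases. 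As $\Delta<t_{\max}$ in the rising phase, $e^{\Delta-t_{\max}}<1$, so this peak strictly exceeds $\overline x$, and since $x^{(\Delta)}$ only decreases afterwards within the return interval, $\overline x_\Delta=x^{(\Delta)}(t_{\max})=\overline x+a(1-e^{\Delta-t_{\max}})$. This formula is visibly $>\overline x$ and strictly decreasing in $\Delta$, settling the claims about the maximum.

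The crux is the minimum. Because we are in subcase \textbf{RPFP}, $x^{(\Delta)}(\Delta+\sigma)\ge0$, so after the pulse the branch $f=-\beta_U$ still governs and
\[
x^{(\Delta)}(t)=-\beta_U+\bigl(x^{(\Delta)}(\Delta+\sigma)+\beta_U\bigr)e^{-(t-(\Delta+\sigma))}
\]
decreases monotonically to its first positive zero
\[
z_{\Delta,2}=\Delta+\sigma+\ln\frac{x^{(\Delta)}(\Delta+\sigma)+\beta_U}{\beta_U},
\]
a transversal downcrossing. The decisive point is that the sign change created at $z_{\Delta,2}$ only reaches the delayed argument at $z_{\Delta,2}+\tau$; hence on $[z_{\Delta,2},z_{\Delta,2}+\tau]$ the branch $f=-\beta_U$ is still in force, and starting from $x^{(\Delta)}(z_{\Delta,2})=0$,
\[
x^{(\Delta)}(z_{\Delta,2}+\tau)=-\beta_U+\beta_Ue^{-\tau}=-\beta_U(1-e^{-\tau})=\underline x.
\]
Since $x^{(\Delta)}$ decreases throughout $[t_{\max},z_{\Delta,2}+\tau]$ and increases just afterwards, the minimum over the return interval is attained at $z_{\Delta,2}+\tau$ and equals $\underline x$; thus $\underline x_\Delta=\underline x$.

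To finish I would establish the return to $\tilde O$ and evaluate the cycle length. For $t>z_{\Delta,2}+\tau$ the delayed value is negative, so $x^{(\Delta)}(t)=\beta_L+(\underline x-\beta_L)e^{-(t-(z_{\Delta,2}+\tau))}$, which is exactly the branch of $\tilde x$ with $\tilde x(0)=\underline x$, shifted by $z_{\Delta,2}+\tau$; hence $x^{(\Delta)}$ lies on the periodic orbit from then on (consistent with Theorem~\ref{t:3.3}), reaching the matching upcrossing at $z_{\Delta,3}=z_{\Delta,2}+\tau+\tilde z_1$ by Corollary~\ref{cor:max-min-per}, which in particular gives $T(\Delta)<\infty$. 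Therefore $T(\Delta)=z_{\Delta,3}-\tilde z_1=z_{\Delta,2}+\tau$; substituting \eqref{e:xprf} for $x^{(\Delta)}(\Delta+\sigma)$ and simplifying with $\tilde z_2+\tau=\tilde T$ and $(1-e^{-\sigma})e^{\sigma}=e^{\sigma}-1$ produces exactly formula \eqref{e:TPbetaLB}. Its logarithmic argument $1+\frac{a(e^{\sigma}-1)}{\beta_U}e^{\Delta-\tilde z_2}$ exceeds $1$ and is strictly increasing in $\Delta$, so $T(\Delta)>\tilde T$ and $T$ is strictly increasing on $I_{RPFP}$, as claimed.

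The main obstacle is the bookkeeping of which branch of $f$ is active on each subinterval, i.e. tracking the instants at which successive sign changes of $x^{(\Delta)}$ reach the delayed term. In particular one must check that the hypothesis $x^{(\Delta)}(\Delta+\sigma)\ge0$ defining \textbf{RPFP} forces the first post-pulse zero to be the downcrossing $z_{\Delta,2}$ (so that $x^{(\Delta)}$ does not turn upward before crossing), and that no sign change interferes on $[z_{\Delta,2},z_{\Delta,2}+\tau]$; this interference-free window is exactly what yields the clean identity $\underline x_\Delta=\underline x$ and hence the simple closed form for $T(\Delta)$.
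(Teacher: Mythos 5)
Your proposal is correct and follows essentially the same route as the paper's proof: explicit integration through the successive branches of $f$, the identity $\beta_U e^{z_{\Delta,2}}=\beta_U e^{\tilde z_2}+a(e^{\sigma}-1)e^{\Delta}$, the observation that the history of $x^{(\Delta)}$ on $[z_{\Delta,2},z_{\Delta,2}+\tau]$ coincides with that of $\tilde x$ at $\tilde z_2$ (the paper phrases your ``interference-free window'' as $x^{(\Delta)}(z_{\Delta,2}+t)=\tilde x(\tilde z_2+t)$ for all $t\ge 0$), and hence $T(\Delta)=z_{\Delta,2}+\tau$, which yields \eqref{e:TPbetaLB} and its monotonicity. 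The one blemish is your justification of $\overline x_\Delta>\overline x$ via ``$\Delta<t_{\max}$'': the interval $I_{RPFP}$ may contain the endpoint $\Delta=t_{\max}$, where $\overline x_\Delta=\overline x+a(1-e^{\Delta-t_{\max}})=\overline x$, but this boundary defect is inherited from the proposition itself, whose own proof likewise concludes only $\overline x_\Delta\ge\overline x$.
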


\begin{proposition}\label{c:prop5}
If $\Delta\in I_{RPFN}$  then $T(\Delta)<\infty$,  $\underline{x}_{\Delta}>\underline{x}$,
\[
\overline{x}_{\Delta}=\overline{x}+a\bigl(1-e^{\Delta-t_{\max}}\bigr)\ge\overline{x},
\]
and
\begin{equation}
T(\Delta)=\tilde{T}+\ln\left(1-\dfrac{a(e^{\sigma}-1)}{\beta_L}e^{\Delta-\tilde{z}_1-\tilde{T}}-\dfrac{a(\beta_L+\beta_U)e^{-\tilde{z}_1}}
{\beta_L(\beta_U-a)}(e^{\Delta-\tilde{z}_2}-1)\right).
\label{eqn:prop5.5}
\end{equation}
Moreover, the map $I_{RPFN} \ni\Delta\mapsto \underline{x}_{\Delta}\in\mathbb{R}$ is strictly increasing  and  the map  $I_{RPFN} \ni\Delta\mapsto T(\Delta)\in\mathbb{R}$ is strictly decreasing.
\end{proposition}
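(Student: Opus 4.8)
The plan is to track $x^{(\Delta)}$ arc by arc on $[\tilde z_1,z_{\Delta,3}]$ and then read off all four assertions from the explicit exponential pieces. Since $\Delta\in I_{RPFN}\subset(t_{\max}-\sigma,t_{\max}]$ with $\Delta>\tilde z_1$, the largest zero of $\tilde x$ below $\Delta$ is $\tilde z_1$, so $J=j_\Delta=1$ and $z_{\Delta,1}=\tilde z_1$. As already computed in this subsection, $x^{(\Delta)}$ rises on the $\beta_L+a$ arc up to $t_{\max}$, where it attains $x^{(\Delta)}(t_{\max})=\overline x+a(1-e^{\Delta-t_{\max}})$, and then falls monotonically on the $-\beta_U+a$ arc across $[t_{\max},\Delta+\sigma]$ with $x^{(\Delta)}(\Delta+\sigma)<0$ (which defines the subcase $\mathbf{RPFN}$). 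Hence there is a unique zero $z_{\Delta,2}\in(t_{\max},\Delta+\sigma)$, obtained by solving the $-\beta_U+a$ arc, namely $z_{\Delta,2}=t_{\max}+\ln\frac{x^{(\Delta)}(t_{\max})+\beta_U-a}{\beta_U-a}$; the argument of the logarithm is positive because $x^{(\Delta)}(t_{\max})>0>a-\beta_U$ (recall $a<\beta_U$), which is exactly the strict version of the monotonicity already noted.

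The key bookkeeping step is to verify that after the pulse the solution stays on the plain $-\beta_U$ arc until $t=z_{\Delta,2}+\tau$. For $t\in[\Delta+\sigma,z_{\Delta,2}+\tau]$ the delayed argument $t-\tau$ lies in $[\Delta+\sigma-\tau,z_{\Delta,2}]$; since $\Delta+\sigma-\tau>\tilde z_1$ (because $\Delta+\sigma>t_{\max}$) and $x^{(\Delta)}$ is positive on $(\tilde z_1,z_{\Delta,2})$ and vanishes at $z_{\Delta,2}$, the delayed value is nonnegative throughout, so the $-\beta_U$ arc is the correct one and the solution decreases until the delayed argument first turns negative, at $t=z_{\Delta,2}+\tau$. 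Because $\sigma\le\tau$ forces $z_{\Delta,2}+\tau>\Delta+\sigma$, the minimum over $[\tilde z_1,z_{\Delta,3}]$ is attained there, giving $\underline x_\Delta=x^{(\Delta)}(z_{\Delta,2}+\tau)$, while the only local maximum is the one at $t_{\max}$, so $\overline x_\Delta=\overline x+a(1-e^{\Delta-t_{\max}})\ge\overline x$. Writing $d=\Delta+\sigma-z_{\Delta,2}\in(0,\tau)$ for the length of the still-pulsed descent after $z_{\Delta,2}$, composing the $-\beta_U+a$ arc on $[z_{\Delta,2},\Delta+\sigma]$ with the $-\beta_U$ arc on $[\Delta+\sigma,z_{\Delta,2}+\tau]$ collapses to $\underline x_\Delta=\underline x+ae^{-\tau}(e^{d}-1)$, whence $\underline x_\Delta>\underline x$ since $d>0$.

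For monotonicity of $\underline x_\Delta$ I differentiate $d$ in $\Delta$: substituting the formula for $z_{\Delta,2}$ and using $x^{(\Delta)}(t_{\max})+ae^{\Delta-t_{\max}}=\overline x+a$ gives $\frac{dd}{d\Delta}=\frac{\overline x+\beta_U}{x^{(\Delta)}(t_{\max})+\beta_U-a}>0$, so $d$, and hence $\underline x_\Delta$, is strictly increasing. For the cycle length, Proposition~\ref{prop:prop4.2} with $J=1$ yields $T(\Delta)=z_{\Delta,3}-\tilde z_1$, and solving the $\beta_L$ arc out of the minimum gives $z_{\Delta,3}=z_{\Delta,2}+\tau+\ln\frac{\beta_L-\underline x_\Delta}{\beta_L}$, which is finite because $\underline x_\Delta<0<\beta_L$; this already establishes $T(\Delta)<\infty$. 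Collecting terms and using $t_{\max}=\tilde z_1+\tau$ together with Corollary~\ref{cor:max-min-per} produces the compact form
\[
T(\Delta)=2\tau+\ln\frac{x^{(\Delta)}(t_{\max})+\beta_U-a}{\beta_U-a}+\ln\frac{\beta_L-\underline x_\Delta}{\beta_L}.
\]
Both logarithmic terms are decreasing in $\Delta$ (the first by direct inspection, the second because $\underline x_\Delta$ increases), so $T$ is strictly decreasing. Matching this expression to the stated formula \eqref{eqn:prop5.5} is then pure algebra, inserting $x^{(\Delta)}(t_{\max})=\overline x+a(1-e^{\Delta-t_{\max}})$ and $\underline x_\Delta=\underline x+ae^{-\tau}(e^{d}-1)$ and simplifying with the identities for $\underline x,\overline x,\tilde z_1,\tilde z_2,\tilde T$.

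The main obstacle is the arc bookkeeping of the second paragraph rather than any single computation: one must confirm that $f(x^{(\Delta)}(\cdot-\tau))$ keeps the value $-\beta_U$ exactly on $[\Delta+\sigma,z_{\Delta,2}+\tau]$ — including the subinterval where $t-\tau$ samples the \emph{perturbed} (not periodic) portion of the solution — so that no spurious sign change of the delayed term intervenes and the ordering $t_{\max}<z_{\Delta,2}<\Delta+\sigma<z_{\Delta,2}+\tau<z_{\Delta,3}$ holds with all zeros transversal. Once this is secured, the identification $J=1$ and the remaining extremum and cycle-length formulas follow from the elementary exponential arcs.
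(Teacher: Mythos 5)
Your proof is correct and follows essentially the same route as the paper's: both track the solution arc by arc, locate the interior zero $z_{\Delta,2}$ of the pulsed falling arc (equivalently, via $(\beta_U-a)e^{z_{\Delta,2}}=\beta_Ue^{\tilde z_2}-ae^{\Delta}$), identify $\overline{x}_\Delta=x^{(\Delta)}(t_{\max})$ and $\underline{x}_\Delta=x^{(\Delta)}(z_{\Delta,2}+\tau)=\underline{x}+ae^{-\tau}\bigl(e^{\Delta+\sigma-z_{\Delta,2}}-1\bigr)$, and obtain $T(\Delta)=z_{\Delta,3}-\tilde z_1$ from the exit along the $\beta_L$-arc. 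Your compact form $T(\Delta)=2\tau+\ln\frac{x^{(\Delta)}(t_{\max})+\beta_U-a}{\beta_U-a}+\ln\frac{\beta_L-\underline{x}_\Delta}{\beta_L}$ is indeed equivalent to \eqref{eqn:prop5.5} by the substitutions you name (the paper carries this algebra out explicitly), and your monotonicity arguments, phrased via $d=\Delta+\sigma-z_{\Delta,2}$ instead of the paper's observation that $e^{z_{\Delta,2}-\Delta}$ is strictly decreasing, are the same in substance.
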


\begin{remark}\label{rem:rem2}
For   $\Delta$ close to $\delta_2$ we have $T(\Delta)>\tilde{T}$.
\end{remark}

\subsection{A pulse during the falling phase}
\label{sec:fall}
Suppose now that $x^{(\Delta)}(\Delta)$ is given by~\eqref{e:x2+DbU} for $\Delta\in [t_{\max},\tilde{T})$. We have
\begin{equation*}
   x^{(\Delta)}(t) =  -\beta_U + a + \bigl(x^{(\Delta)}(\Delta) + \beta_U-a\bigr) e^{-(t-\Delta)}
\end{equation*}
for $\Delta\le t\le\Delta+\sigma$ as long as $x^{(\Delta)}(t-\tau)>0$.

We assume that  $t_{\max}\le \Delta<\Delta+\sigma\le\tilde{z}_2+\tau=\tilde{T}$.
Then the value of $x^{(\Delta)}$ at the end of the pulse is
\begin{equation*}
x^{(\Delta)}(\Delta + \sigma) = -\beta_U + a + \bigl(x^{(\Delta)}(\Delta) +\beta_U - a\bigr) e^{- \sigma }
\end{equation*}
and, by \eqref{e:x2+DbU}, it is  the same as in \eqref{e:xprf}.
 We proceed in steps as before.

\begin{description}
\item[Case $\mathbf{FPF}$] The pulse starts in the interval $[t_{\max},\tilde{z}_2]$. We distinguish between two subcases.
\begin{description}
\item[$\mathbf{P}$] The  pulse parameters $(a,\Delta, \sigma )$ are such that  $x^{(\Delta)}(t)$ remains nonnegative during the pulse, and changes the sign after the pulse, $x^{(\Delta)}(\Delta+\sigma)\ge0$.  
\item[$\mathbf{N}$] The pulse parameters $(a,\Delta,\sigma)$ are such that $x^{(\Delta)}$ changes the sign from positive to negative during the pulse (see Figure \ref{fig:bu-c2}), $t_{\max}\le\Delta\le \tilde{z}_2$ and $x^{(\Delta)}(\Delta+\sigma)<0$.  
\end{description}
\end{description}

 Since $\Delta\in[t_{\max},\tilde{z}_2]$  it follows from \eqref{e:xprf} that $x^{(\Delta)}(\Delta+\sigma)\ge0$ if and only if
\[
-\beta_U + \beta_U  e^{\tilde{z}_2-\Delta-\sigma}+a (1- e^{- \sigma })\ge 0,
\]
or equivalently,
\[
\beta_U  e^{\tilde{z}_2-\Delta}\ge \beta_U e^{\sigma} -a ( e^{ \sigma }-1).
\]
The corresponding $\Delta$-intervals are of the form
\[
I_{FPFP}=[t_{\max},\tilde{z}_2]\cap(-\infty,\delta_2]
\]
and
\[
I_{FPFN}=[t_{\max},\tilde{z}_2]\cap(\delta_2,\infty).
\]

\begin{proposition}\label{c:prop6} If $\Delta\in I_{FPFP}$  then $T(\Delta)<\infty$, $\overline{x}_{\Delta}=\overline{x}$, $\underline{x}_{\Delta}=\underline{x}$, and  $T(\Delta)$ is given by formula \eqref{e:TPbetaLB}  in Proposition \ref{c:prop3}.
\end{proposition}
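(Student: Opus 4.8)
The plan is to show that in this subcase the perturbed solution $x^{(\Delta)}$ never dips below the unperturbed minimum and never exceeds the unperturbed maximum over one cycle, so that $\underline{x}_{\Delta}=\underline{x}$ and $\overline{x}_{\Delta}=\overline{x}$, and then to derive the cycle length by a direct comparison with the computation already carried out in Proposition \ref{c:prop3}. The governing observation is that in case $\mathbf{FPFP}$ the entire pulse $[\Delta,\Delta+\sigma]\subseteq[t_{\max},\tilde{z}_2]$ takes place while $x^{(\Delta)}$ is nonnegative and the delayed argument $x^{(\Delta)}(t-\tau)$ is still nonnegative, so that the only effect of the additive constant $a>0$ is to raise the local value of the (already decreasing) solution; after the pulse $x^{(\Delta)}(\Delta+\sigma)\ge 0$ by the defining condition $\Delta\in I_{FPFP}=[t_{\max},\tilde{z}_2]\cap(-\infty,\delta_2]$.

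First I would record the explicit formula for the value at the end of the pulse. Since $x^{(\Delta)}(\Delta)$ is given by \eqref{e:x2+DbU} and, as noted just above the statement, $x^{(\Delta)}(\Delta+\sigma)$ coincides with the expression \eqref{e:xprf}, namely $x^{(\Delta)}(\Delta+\sigma)=-\beta_U+\beta_Ue^{\tilde{z}_2-(\Delta+\sigma)}+a(1-e^{-\sigma})$, the perturbation shifts the solution upward by exactly $a(1-e^{-\sigma})$ relative to $\tilde{x}(\Delta+\sigma)$. This upward shift is the decisive structural fact: because the pulse acts during the falling phase, raising the value can only make the solution recross zero \emph{later}, lengthening the cycle. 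Concretely, after $\Delta+\sigma$ the solution again obeys \eqref{e:pl-sys-dimen1} with delayed argument still positive, so it decays toward $-\beta_U$ as $x^{(\Delta)}(t)=-\beta_U+(x^{(\Delta)}(\Delta+\sigma)+\beta_U)e^{-(t-(\Delta+\sigma))}$ until it reaches its first positive zero $z_{\Delta,J+1}$, where $J=j_\Delta\in\{1\}$. I would locate this zero explicitly and compare it with $\tilde{z}_2$; the additive $a(1-e^{-\sigma})>0$ pushes the zero to the right, and solving for it reproduces precisely the logarithmic correction $\ln(1+\tfrac{a(e^\sigma-1)}{\beta_U}e^{\Delta-\tilde{z}_2})$ appearing in \eqref{e:TPbetaLB}. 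After this zero the solution rises toward $\beta_L$ exactly as $\tilde{x}$ does after $\tilde{z}_2$, so by the positive invariance of $Z_0$ and Theorem \ref{t:3.3} the segment returns to $\tilde{O}$, giving $T(\Delta)<\infty$ and the stated formula for $T(\Delta)$.

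For the extrema I would argue as follows. The maximum $\overline{x}_{\Delta}$ over $[\tilde{z}_J,z]$ is attained at the last maximum before the pulse, which is $\tilde{x}(t_{\max})=\overline{x}$, since during and after the pulse the solution only decreases toward $-\beta_U$ and then, once past its zero, rises along the same increasing exponential toward $\beta_L$ that the periodic solution follows; hence no value exceeds $\overline{x}$, giving $\overline{x}_{\Delta}=\overline{x}$. For the minimum, the delayed argument controlling the sign of $f$ remains positive throughout the relevant window, so the solution descends along $-\beta_U+(\cdots)e^{-(t-(\Delta+\sigma))}$ with the \emph{same} asymptote $-\beta_U$ as $\tilde{x}$ but starting from a higher value; its subsequent minimum is therefore reached one delay after the zero $z_{\Delta,J+1}$ and, by the same geometry that produced $\underline{x}=-\beta_U(1-e^{-\tau})$ in Corollary \ref{cor:max-min-per}, equals $\underline{x}$ exactly, so $\underline{x}_{\Delta}=\underline{x}$. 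The main obstacle I anticipate is purely bookkeeping: verifying that the delayed argument $x^{(\Delta)}(t-\tau)$ keeps the correct sign across each sub-interval (during the pulse, between $\Delta+\sigma$ and $z_{\Delta,J+1}$, and thereafter) so that the piecewise formulas \eqref{e:3.8}–\eqref{e:3.9} apply with the right branch of $f$; once the sign pattern is pinned down, the identification of $T(\Delta)$ with \eqref{e:TPbetaLB} and of the extrema with $\overline{x},\underline{x}$ follows by the same elementary exponential algebra used in Proposition \ref{c:prop3}.
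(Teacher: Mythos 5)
Your proposal is correct and takes essentially the same route as the paper's proof: establish that $x^{(\Delta)}$ stays nonnegative through the pulse with delayed argument still positive, so that after $\Delta+\sigma$ it decays toward $-\beta_U$, locate its first zero $z_{\Delta,2}$ from $\beta_U e^{z_{\Delta,2}}=\beta_U e^{\tilde{z}_2}+a(e^{\sigma}-1)e^{\Delta}$, observe that from this zero onward the solution coincides with $\tilde{x}(\tilde{z}_2+\cdot)$, and read off \eqref{e:TPbetaLB} together with $\overline{x}_{\Delta}=\overline{x}$ and $\underline{x}_{\Delta}=\underline{x}$. Two slips in your write-up should be repaired, though neither damages the argument: first, the pulse interval need not be contained in $[t_{\max},\tilde{z}_2]$ (only $\Delta\le\delta_2<\tilde{z}_2$ is guaranteed, and $\Delta+\sigma$ may exceed $\tilde{z}_2$ since the perturbed zero $z_{\Delta,2}$ lies to the right of $\tilde{z}_2$; what you actually need is $x^{(\Delta)}\ge 0$ on $[\Delta,\Delta+\sigma]$ and that the delayed argument stays in $[\tilde{z}_1,\tilde{z}_2]$, which follows from $\sigma\le\tau$), and second, after $z_{\Delta,2}$ the solution does not ``rise toward $\beta_L$'' — it keeps falling for one more delay, reaching $\underline{x}$ at $z_{\Delta,2}+\tau$ before rising, exactly as $\tilde{x}$ does after $\tilde{z}_2$, which is in fact the behaviour you correctly rely on later when you compute $\underline{x}_{\Delta}$.
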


We  turn to case \textbf{FPFN}. From Figure \ref{fig:bu-c2} we expect $\tilde{z}_2\le z_{\Delta,2}$ and $\underline{x}_{\Delta}=x^{(\Delta)}(z_{\Delta,2}+\tau)>\tilde{x}(\tilde{z}_2+\tau)=\underline{x}$, that is, the minimum value of $x^{(\Delta)}$ is above the minimum value of $\tilde{x}$.
\begin{figure}[htb]
\centering

\includegraphics{pcr-6.mps}

\caption{A schematic representation of the solution of the DDE for case \textbf{FPFN}.  The unperturbed periodic solution $\tilde x$ is the solid line while the solution $x^{(\Delta)}$ with the pulse is the dashed line.}
\label{fig:bu-c2}
\end{figure}

\begin{proposition}\label{c:prop7} If $\Delta\in  I_{FPFN}$ then $T(\Delta)<\infty$, $\overline{x}_{\Delta}=\overline{x}$, $\underline{x}_{\Delta}>\underline{x}$, and both $\underline{x}_{\Delta}$ and $T(\Delta)$ are as in Proposition \ref{c:prop5}.
\end{proposition}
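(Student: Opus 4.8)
The plan is to reduce the case \textbf{FPFN} to the already-treated case \textbf{RPFN} (Proposition~\ref{c:prop5}) by showing that, although the pulse shapes differ, the two ingredients driving the post-pulse evolution — the value $x^{(\Delta)}(\Delta+\sigma)$ and the downward zero of $x^{(\Delta)}$ produced during the pulse — coincide in both cases. First I would fix $\Delta\in I_{FPFN}\subset[t_{\max},\tilde z_2]$ and record the shape of $x^{(\Delta)}$. For $\Delta\in[t_{\max},\tilde z_2)$ the largest zero of $\tilde x$ in $(-\infty,\Delta]$ is $\tilde z_1$, so $j_\Delta=1$ and, by Proposition~\ref{prop:prop4.2}, $T(\Delta)=z_{\Delta,3}-\tilde z_1<\infty$; the single endpoint $\Delta=\tilde z_2$ is covered by continuity of $T$ (Corollary~\ref{cor:cor4.2}). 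On $[\Delta,\Delta+\sigma]$ the coefficient $x^{(\Delta)}(\Delta)+\beta_U-a=\beta_Ue^{\tilde z_2-\Delta}-a$ is positive, since $\Delta<\tilde z_2$ and $\beta_U>a$, so $x^{(\Delta)}$ is strictly decreasing during the pulse and, by the defining property of \textbf{FPFN}, has exactly one zero $z_{\Delta,2}\in(\Delta,\Delta+\sigma)$. After the pulse the delayed value $x^{(\Delta)}(t-\tau)$ stays positive until $t=z_{\Delta,2}+\tau$, so $x^{(\Delta)}$ continues to decrease (with $f=-\beta_U$) and attains its minimum at $t=z_{\Delta,2}+\tau$, after which it increases and meets zero at $z_{\Delta,3}$.

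Next I would settle the maximum. Since the pulse begins at $\Delta\ge t_{\max}$, on the whole interval $[\tilde z_1,z_{\Delta,3}]$ the function $x^{(\Delta)}$ rises (as $\tilde x$) to the value $\overline x$ at $t_{\max}$ and thereafter only decreases through the pulse and across $z_{\Delta,2}$, returning to zero at $z_{\Delta,3}$; hence $\overline{x}_{\Delta}=\overline{x}$. This is the one conclusion that genuinely differs from Proposition~\ref{c:prop5}, where the pulse precedes $t_{\max}$ and lifts the maximum.

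The heart of the proof is the identification with \textbf{RPFN}. By \eqref{e:x2+DbU} the value $x^{(\Delta)}(\Delta+\sigma)$ is again given by \eqref{e:xprf}, exactly as in Section~\ref{sec:rise-fall}. Solving $x^{(\Delta)}(z_{\Delta,2})=0$ on the decreasing exponential of the pulse gives
\begin{equation*}
z_{\Delta,2}=\ln\frac{\beta_Ue^{\tilde z_2}-ae^{\Delta}}{\beta_U-a},
\end{equation*}
which is precisely the formula obtained in the \textbf{RPFN} case (there the downward crossing sits on the falling branch past $t_{\max}$, but the algebra collapses to the same expression). Now for $t\ge\Delta+\sigma$ the solution obeys the unperturbed equation, and on the segment $x^{(\Delta)}_{\Delta+\sigma}$ the only sign change is at $z_{\Delta,2}$; consequently the subsequent trajectory is determined entirely by the pair $\bigl(x^{(\Delta)}(\Delta+\sigma),z_{\Delta,2}\bigr)$ — the value fixes the decreasing exponential $x^{(\Delta)}(t)=-\beta_U+\bigl(x^{(\Delta)}(\Delta+\sigma)+\beta_U\bigr)e^{-(t-\Delta-\sigma)}$, and $z_{\Delta,2}$ fixes the switching time $z_{\Delta,2}+\tau$ at which $f$ turns to $\beta_L$. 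Since both quantities agree with the \textbf{RPFN} case, the minimum $\underline{x}_{\Delta}=x^{(\Delta)}(z_{\Delta,2}+\tau)$, the next zero $z_{\Delta,3}$, and $T(\Delta)=z_{\Delta,3}-\tilde z_1$ are given by the same expressions; in particular $\underline{x}_{\Delta}>\underline{x}$ and $T(\Delta)$ equals \eqref{eqn:prop5.5}, while finiteness of $T(\Delta)$ also follows (alternatively from Theorem~\ref{t:3.3}). The monotonicity statements transfer because they are read off the sign of the derivative of these explicit formulas; one only needs to recheck that this sign is preserved on the range $I_{FPFN}$, which is the same computation as in Proposition~\ref{c:prop5}.

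The step I expect to be the main obstacle is the coincidence of $z_{\Delta,2}$ across the two cases: it is not a priori obvious that a pulse straddling $t_{\max}$ (case \textbf{RPFN}) and a pulse lying entirely in the falling phase (case \textbf{FPFN}) produce the same downward zero, and establishing it rests on the cancellation that makes $z_{\Delta,2}$ depend on $\Delta$ only through $\beta_Ue^{\tilde z_2}-ae^{\Delta}$. The accompanying conceptual point — that the post-pulse history enters only through the pair $\bigl(x^{(\Delta)}(\Delta+\sigma),z_{\Delta,2}\bigr)$ — must be argued with care, since the two solutions are \emph{not} equal on $[\Delta,\Delta+\sigma]$ and only their endpoint value and their single interior zero agree.
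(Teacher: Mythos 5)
Your proposal is correct and follows essentially the same route as the paper: you show $x^{(\Delta)}$ is strictly decreasing during the pulse (since $x^{(\Delta)}(\Delta)+\beta_U-a=\beta_Ue^{\tilde z_2-\Delta}-a>0$), derive for the resulting zero $z_{\Delta,2}$ exactly the relation $(\beta_U-a)e^{z_{\Delta,2}}=\beta_Ue^{\tilde z_2}-ae^{\Delta}$, i.e.\ \eqref{e:z2P}, note that $x^{(\Delta)}(\Delta+\sigma)$ is again given by \eqref{e:xprf}, and then import the remainder of the proof of Proposition~\ref{c:prop5} verbatim, which is precisely what the paper does. The only cosmetic differences (handling the endpoint $\Delta=\tilde z_2$ by continuity, and invoking Proposition~\ref{prop:prop4.2} for finiteness) do not change the argument.
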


Assume now that $\Delta\in (\tilde{z}_2,\tilde{z}_2+\tau-\sigma)$. From $x^{(\Delta)}(\Delta)=-\beta_U+(0+\beta_U)e^{-(\Delta-\tilde{z}_2)}$ and
$$
x^{(\Delta)}(\Delta+\sigma)=-\beta_U+a+\bigl(x^{(\Delta)}(\Delta)+\beta_U-a\bigr)e^{-\sigma}
$$
we have $x^{(\Delta)}(\Delta)<0$ and $x^{(\Delta)}(\Delta+\sigma)<0$. Thus we consider the case
\begin{description}
\item[Case $\mathbf{FNFN}$] The pulse parameters $(a,\Delta,\sigma)$ are such that the pulse begins after $x^{(\Delta)}$ changes the sign  from positive to negative and ends before the time $\tilde{T}=\tilde{z}_2+\tau$, $\tilde{z}_2<\Delta$ and $\Delta+\sigma< \tilde{z}_2+\tau$, see Figure \ref{fig:bu-d}.
 \end{description}

\begin{figure}[htb]
\centering

\includegraphics{pcr-7.mps}

\caption{A schematic representation of the solution for case $\mathbf{FNFN}$.  The unperturbed periodic solution $\tilde x$ is the solid line while the solution $x^{(\Delta)}$ with the pulse is the dashed line.}
\label{fig:bu-d}
\end{figure}

 From Figure \ref{fig:bu-d} we expect  that the minimum value of $x^{(\Delta)}$ in $[\tilde{z}_2,\tilde{z}_2+\tau]$  is above the minimum value $\underline{x}$ of $\tilde{x}$, and  the cycle length $T(\Delta)$ is below the minimal period $\tilde{T}$ of $\tilde{x}$.
Observe that the function $[\Delta,\Delta+\sigma]\ni t\mapsto x^{(\Delta)}(t)\in\mathbb{R}$ is increasing if and only if
\[
x^{(\Delta)}(\Delta)+\beta_U-a<0,
\]
which is equivalent to $\beta_Ue^{\tilde{z}_2}<ae^{\Delta}$.
Now, if $t\in[\Delta+\sigma,\tilde{T}]$
then $\tilde{z}_1\le t-\tau\le \tilde{z}_2$. Thus
\[
x^{(\Delta)}(t)  =  -\beta_U+\bigl(x^{(\Delta)}(\Delta+\sigma)+\beta_U\bigr)
e^{-(t-(\Delta+\sigma))}
\]
and
we have
\[
x^{(\Delta)}(\Delta+\sigma)+\beta_U=\beta_Ue^{\tilde{z}_2-(
\Delta+\sigma)} +a(1-e^{-\sigma}),
\]
which is always positive. Hence, the function $t\mapsto x^{(\Delta)}(t)$ is strictly decreasing on the interval $[\Delta+\sigma,\tilde{T}]$ and we have
\[
x^{(\Delta)}(\tilde{T})  =  -\beta_U+\bigl(x^{(\Delta)}(\Delta+\sigma)+\beta_U\bigr)
e^{-\left(\tilde{T}-(\Delta+\sigma)\right)}
\]
which becomes
\begin{align*}
x^{(\Delta)}(\tilde{T})&=-\beta_U+\bigl(\beta_Ue^{\tilde{z}_2-(
\Delta+\sigma)} +a(1-e^{-\sigma})\bigr)
e^{-\left(\tilde{T}-(\Delta+\sigma)\right)}\\
&=-\beta_U+\beta_Ue^{-\tau}+a(e^{\sigma}-1)
e^{\Delta-\tilde{T}}.
\end{align*}

\begin{figure}[htb]

\centering
\includegraphics{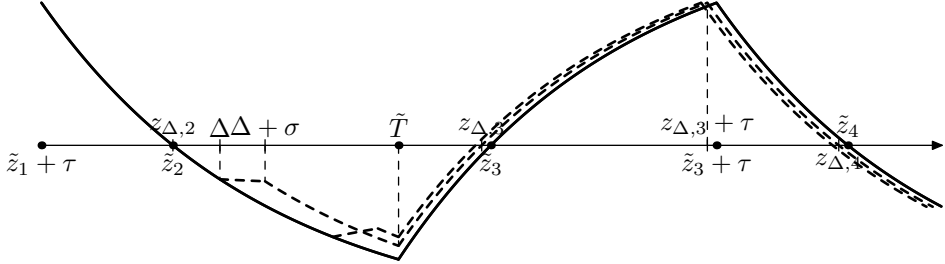}
\caption{Graphs of two solutions for case  $\mathbf{FNFN}$, where the parameters are $\tau=1$, $\sigma=0.2$, $\beta_U=0.8$, $\beta_L=1$, $a=0.6$, and $\Delta\in \{2.2,2.7\}$, showing that the function $I_{FNFN}\ni\Delta\mapsto \underline{x}_{\Delta}$ is not given by one formula.}
\end{figure}

Define $\overline{\delta}\in\mathbb{R}$ by
$$
e^{\tilde{T}-\overline{\delta}}=\dfrac{\beta_U+\sqrt{\beta_U ^2+4a\beta_U(e^{\sigma}-1)e^{\tau}}}{2\beta_U}.
$$

\begin{proposition}\label{c:prop8} If  $\Delta\in I_{FNFN}$ then $T(\Delta)<\infty$,  $\overline{x}_{\Delta}=\overline{x}$,
$$
\underline{x}_{\Delta}=\min\{\tilde{x}(\Delta),x^{(\Delta)}(\tilde{T})\}>\underline{x},
$$
and
\begin{equation}
T(\Delta)=\tilde{T}+\ln\left(1-\dfrac{a(e^{\sigma}-1)}{\beta_L}e^{\Delta-\tilde{z}_1-\tilde{T}}\right)<\tilde{T}.
\label{e:prop5.9}
\end{equation}
In case $I_{FNFN}\cap(-\infty,\overline{\delta})\neq\emptyset$ the map
$$
I_{FNFN}\cap(-\infty,\overline{\delta})\ni\Delta\mapsto\underline{x}_{\Delta}\in\mathbb{R}
$$
is strictly increasing while in case $I_{FNFN}\cap(\overline{\delta},\infty)\neq\emptyset$ the map
$$
I_{FNFN}\cap(\overline{\delta},\infty)\ni\Delta\mapsto\underline{x}_{\Delta}\in\mathbb{R}
$$
is strictly decreasing.
\end{proposition}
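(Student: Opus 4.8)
The plan is to read off the shape of $x^{(\Delta)}$ on $[\tilde z_2,\tilde T]$ from the formulas already derived in this subsection, locate the minimum and the return to $\tilde O$, and then analyse the two competing candidate values for $\underline{x}_\Delta$. First I would record the monotonicity structure. On $[\tilde z_2,\Delta]$ the solution coincides with $\tilde x$ and is strictly decreasing; during the pulse it is monotone (increasing exactly when $\beta_Ue^{\tilde z_2}<ae^{\Delta}$, as noted above); and on $[\Delta+\sigma,\tilde T]$ it is strictly decreasing because the coefficient $x^{(\Delta)}(\Delta+\sigma)+\beta_U$ from \eqref{e:xprf} is positive. In particular $x^{(\Delta)}(\tilde T)<x^{(\Delta)}(\Delta+\sigma)<0$. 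Consequently the only candidates for the minimum on $[\tilde z_2,\tilde T]$ are the value at $\Delta$, equal to $\tilde x(\Delta)=x^{(\Delta)}(\Delta)$ by \eqref{e:x2+DbU}, and the value $x^{(\Delta)}(\tilde T)$ at the right endpoint; since $x^{(\Delta)}(\tilde T)<0$ the solution rises through a zero for $t>\tilde T$ and no smaller value occurs afterwards, so $\underline{x}_\Delta=\min\{\tilde x(\Delta),x^{(\Delta)}(\tilde T)\}$ holds uniformly in all subcases.

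Next I would pin down the return and formula \eqref{e:prop5.9}. For $t\ge\tilde T$ one has $x^{(\Delta)}(t-\tau)<0$, so $x^{(\Delta)}$ solves $x'=-x+\beta_L$, the same equation governing $\tilde x$ just after its minimum $\underline x$ at $\tilde T$; the two are therefore time translates, and choosing $\Delta T=\ln\frac{\beta_L-x^{(\Delta)}(\tilde T)}{\beta_L-\underline x}$ gives $x^{(\Delta)}(t)=\tilde x(t-\Delta T)$ on an interval longer than $\tau$. By uniqueness of the semiflow the identity persists for all later $t$, so $x^{(\Delta)}$ joins $\tilde O$ (cf. Theorem \ref{t:3.3}); this yields $\overline{x}_\Delta=\overline x$ (the single positive excursion on $[\tilde z_2,z]$ is the rejoined hump, of height $\overline x$) and $T(\Delta)=\tilde T+\Delta T$. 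Substituting $x^{(\Delta)}(\tilde T)=\underline x+a(e^\sigma-1)e^{\Delta-\tilde T}$ (computed just before the statement) together with $\beta_L-\underline x=\beta_Le^{\tilde z_1}$ produces \eqref{e:prop5.9}; since $x^{(\Delta)}(\tilde T)<0<\beta_L$ the argument of the logarithm lies in $(0,1)$, so $T(\Delta)<\infty$ and $T(\Delta)<\tilde T$. The inequality $\underline{x}_\Delta>\underline x$ is then immediate: $\tilde x(\Delta)>\tilde x(\tilde T)=\underline x$ because $\tilde x$ decreases on $[\tilde z_2,\tilde T]$, and $x^{(\Delta)}(\tilde T)=\underline x+a(e^\sigma-1)e^{\Delta-\tilde T}>\underline x$.

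For the monotonicity I would set $g_1(\Delta)=\tilde x(\Delta)=-\beta_U+\beta_Ue^{\tilde z_2-\Delta}$, strictly decreasing, and $g_2(\Delta)=x^{(\Delta)}(\tilde T)=\underline x+a(e^\sigma-1)e^{\Delta-\tilde T}$, strictly increasing, so that $g_1-g_2$ is strictly decreasing and $\underline{x}_\Delta=\min\{g_1,g_2\}$ switches its active branch at a single crossing point. Solving $g_1(\Delta)=g_2(\Delta)$ with the substitution $u=e^{\tilde T-\Delta}$ reduces to the quadratic $u^2-u-a(e^\sigma-1)e^\tau/\beta_U=0$, whose positive root is exactly the displayed value of $e^{\tilde T-\overline\delta}$. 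Hence $\underline{x}_\Delta=g_2$ is increasing for $\Delta<\overline\delta$ and $\underline{x}_\Delta=g_1$ is decreasing for $\Delta>\overline\delta$, which is the asserted behaviour on $I_{FNFN}\cap(-\infty,\overline\delta)$ and $I_{FNFN}\cap(\overline\delta,\infty)$ respectively. The main obstacle is the first step: correctly excluding the spurious local minimum that the pulse can create when it momentarily drives $x^{(\Delta)}$ upward, so that $\underline{x}_\Delta$ is genuinely the smaller of the two values $\tilde x(\Delta)$ and $x^{(\Delta)}(\tilde T)$; once that reduction is secured, the translate computation and the quadratic are routine.
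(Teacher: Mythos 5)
Your proposal is correct and follows essentially the same route as the paper's proof: the same monotonicity decomposition of $x^{(\Delta)}$ on $[\tilde z_2,\tilde T]$ to reduce $\underline{x}_\Delta$ to the two candidates $\tilde x(\Delta)$ and $x^{(\Delta)}(\tilde T)$, the same rejoining argument after the rising zero past $\tilde T$, and the same quadratic in $e^{\tilde T-\Delta}$ locating the switch point $\overline\delta$ between the increasing and decreasing branches of the minimum. Your phase-shift formulation $T(\Delta)=\tilde T+\ln\bigl((\beta_L-x^{(\Delta)}(\tilde T))/(\beta_L-\underline x)\bigr)$ is algebraically identical to the paper's computation of $z_{\Delta,3}$ from $0=\beta_L+\bigl(x^{(\Delta)}(\tilde T)-\beta_L\bigr)e^{-(z_{\Delta,3}-\tilde T)}$, so this is only a cosmetic repackaging.
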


\begin{remark}\label{rem:rem3}
If $\beta_U e^{\sigma - \tau } <  a$ then $\overline{\delta}\in I_{FNFN}=(\tilde{z}_2,\tilde{T}-\sigma)$.  If $\beta_U e^{\sigma - \tau} \ge a$ then $\overline{\delta}\ge\tilde{T}-\sigma$.
\end{remark}

\begin{remark} 
The following relations are equivalent:
\begin{align*}
a & < \beta_U,\\
a(e^{\sigma}-1) & <  \beta_U(e^{\sigma}-1),\\
\beta_U & <  \bigl(\beta_U-a(1-e^{-\sigma})\bigr)e^{\sigma},\\
\ln\dfrac{\beta_U}{\beta_U-a(1-e^{-\sigma})} & < \sigma,\\
\delta_2 & <  \tilde{z}_2.
\end{align*}
Observe that we have
$\tilde{z}_2\le \delta_2< \tilde{T}-\sigma$ if and only if
\begin{equation}\label{c:opo}
\beta_U\le a\quad\text{and}\quad a(1-e^{-\sigma})<\beta_U(1-e^{-\tau}),
\end{equation}
which is excluded by our standing hypothesis $a<\beta_U$.

Let us briefly address what might happen if  \eqref{c:opo} holds.
In that case if $\Delta\in (\tilde{z}_2,\tilde{T}-\sigma)$ then $x^{(\Delta)}(\Delta+\sigma)\ge 0$ if and only if $\Delta\le \delta_2$. We  have $I_{FNFN}=(\delta_2,\tilde{T}-\sigma)$ and Proposition \ref{c:prop8} remains true. The case \textbf{FNFP} is possible with $I_{FNFP}=(\tilde{z}_2,\delta_2]$.  Figure  \ref{fig:FNFP} and in particular Figure \ref{fig:FNFPL} indicate that in the interval $I_{FNFP}$ the cycle length may not be finite everywhere, due to a higher oscillation frequency of the solution on $[\Delta+\sigma,\infty)$.
\end{remark}

\begin{figure}[htb]

\centering
\includegraphics{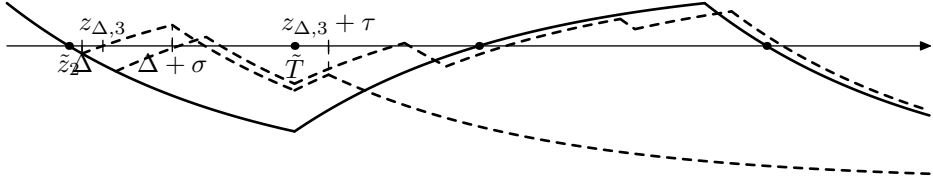}
\caption{Graphs of two solutions for case  $\mathbf{FNFP}$, where the parameters are $\tau=1$, $\sigma=0.4$, $\beta_U=0.6$, $\beta_L=0.3$, $a=0.95$, and $\Delta\in \{2.15,2.3\}$, showing that  $T(\Delta)$ might not exists.  }\label{fig:FNFP}
\end{figure}

\begin{figure}[htb]

\centering
\includegraphics{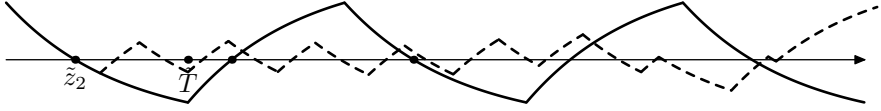}
\caption{A graph of one solution for case  $\mathbf{FNFP}$, where the parameters are $\tau=1$, $\sigma=0.35$, $\beta_U=0.3$, $\beta_L=0.4$, $a=0.7$, and $\Delta=2.22$, showing that $T(\Delta)$ is not given by one formula.}\label{fig:FNFPL}
\end{figure}

\subsection{A pulse from the falling phase into the rising phase}
\label{sec:fall-rise}
Here we suppose that $\tilde{z}_2<\Delta<\tilde{T}\le \Delta+\sigma$ and that $x^{(\Delta)}(\Delta)$ is given by~\eqref{e:x2+DbU}  for $\Delta\in [\tilde{T}-\sigma,\tilde{T})$. We have
\begin{equation*}
   x^{(\Delta)}(t) =  -\beta_U + a + \bigl(\tilde{x}(\Delta) + \beta_U-a\bigr) e^{-(t-\Delta)}   \quad \mbox{for} \quad t \in ( \Delta, \tilde{T}]
\end{equation*}
and the function  $[\Delta, \tilde{T}]\ni t\mapsto x^{(\Delta)}(t)\in\mathbb{R}$ is either strictly increasing, or decreasing.
We have
\begin{align*}
 x^{(\Delta)}(\tilde{T})&=-\beta_U+a+\bigl(x^{(\Delta)}(\Delta)+\beta_U-a\bigr)e^{-(
\tilde{T}-\Delta)}\\
 & = -\beta_U+a+(-\beta_U+\beta_U e^{-(\Delta-\tilde{z}_2)}+\beta_U-a)e^{-(
\tilde{T}-\Delta)}\\
 & = - \beta_U+\beta_U
e^{-\tau}+a\bigl(1-e^{-(\tilde{T}-\Delta)}\bigr)\\
&=\underline{x}+a\bigl(1-e^{\Delta-\tilde{T}}\bigr)>\underline{x},
\end{align*}
which shows that the map
$$
[\tilde{T}-\sigma,\tilde{T})\ni\Delta\mapsto x^{(\Delta)}(\tilde{T})\in\mathbb{R}
$$
is strictly decreasing.

For $t\in[\tilde{T},\Delta+\sigma]$ we have
\[
x^{(\Delta)}(t)  =  \beta_L+a+\bigl(x^{(\Delta)}(\tilde{T})-(\beta_L+a)\bigr)e^{-(
t-\tilde{T})}.
\]
Since
\[
x^{(\Delta)}(\tilde{T})-(\beta_L+a)=-(\beta_L-\underline{x})- ae^{\Delta-\tilde{T}}<0,
\]
the function $[\tilde{T},\Delta+\sigma]\ni t\mapsto x^{(\Delta)}(t)\in\mathbb{R}$ is strictly increasing.
Also
\begin{align*}
x^{(\Delta)}(\Delta+\sigma) & =  \beta_L+a+\bigl(x^{(\Delta)}(\tilde{T})-(\beta_L+a)\bigr)e^{-(
\Delta+\sigma-\tilde{T})}\\
& =  \beta_L+a+\bigl(\underline{x}-a\,e^{\Delta-\tilde{T}
}-\beta_L\bigr)e^{-(
\Delta+\sigma-\tilde{T})}\\
& = \beta_L+(\underline{x}-\beta_L)e^{-(
\Delta+\sigma-\tilde{T})} +a(1-e^{-\sigma})\quad(>\tilde{x}(\Delta+\sigma))\\
& = \beta_L-\beta_Le^{\tilde{z}_1-(
\Delta+\sigma-\tilde{T})}+a(1-e^{-\sigma}).
\end{align*}

\begin{description}
\item[Case $\mathbf{FNR}$] We must distinguish between the two subcases
\begin{description}
\item[N] $x^{(\Delta)}(\Delta+\sigma)< 0$, see Figure \ref{fig:bu-d---bl-a1---III},
\item[P]  $x^{(\Delta)}(\Delta+\sigma)\ge 0$, see Figure \ref{fig:bu-d---bl-a1---IV}.
\end{description}
\end{description}

\begin{figure}[htb]
\centering

\includegraphics{pcr-8.mps}

\caption{A schematic representation of the solution of the DDE for the case $\mathbf{FNRN}$.  The unperturbed limit cycle is the solid line  while the solution with the pulse is the dashed line.}
\label{fig:bu-d---bl-a1---III}
\end{figure}

\begin{figure}[htb]
\centering

\includegraphics{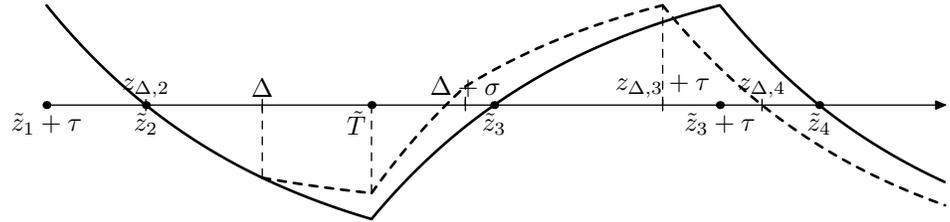}

\caption{A schematic representation of the solution of the DDE for the  case $\mathbf{FNRP}$. }
\label{fig:bu-d---bl-a1---IV}
\end{figure}

We have $\tilde{T}-\sigma\le \Delta<\tilde{T}$, and the condition for subcase \textbf{FNRN}, namely,
$$
0> x^{(\Delta)}(\Delta+\sigma)=\beta_L-\beta_Le^{\tilde{z}_1-(
\Delta+\sigma-\tilde{T})}+a(1-e^{-\sigma}),
$$
is equivalent to
$$
\beta_L+a(1-e^{-\sigma})< \beta_Le^{\tilde{z}_1-(
\Delta+\sigma-\tilde{T})},
$$
or
$$
e^{\Delta}< e^{\tilde{T}}\dfrac{\beta_L e^{\tilde{z}_1}}{\beta_L e^{\sigma}+a(e^{\sigma}-1)}=e^{\tilde{T}+\delta_1}.
$$
Thus we are in case \textbf{FNRN} if and only if
\[
\Delta  \in  [\tilde{T}-\sigma,\tilde{T})\cap(-\infty,\tilde{T}+\delta_1)= I_{FNRN}.
\]
and we are in case \textbf{FNRP} if and only if
\[
\Delta  \in [\tilde{T}-\sigma,\tilde{T})\cap [\tilde{T}+\delta_1,\infty ).
\]

\begin{proposition}\label{c:prop9} If $\Delta\in I_{FNRN}$ then $T(\Delta)<\infty$,
$\overline{x}_{\Delta}=\overline{x}$, $\underline{x}_{\Delta}
>\underline{x}$, and
$T(\Delta)$ is as in \eqref{e:prop5.9} of Proposition \ref{c:prop8}. The map $I_{FNRN}\ni\Delta\mapsto\underline{x}_{\Delta}\in\mathbb{R}$ is strictly decreasing.
\end{proposition}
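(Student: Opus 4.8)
The plan is to follow $x^{(\Delta)}$ across three regimes — the portion $[\tilde z_2,\Delta+\sigma]$ where the solution stays below zero, its rise to the first post-pulse zero, and the subsequent stretch on the periodic orbit — reading off the two extrema and the cycle length, and then to differentiate the resulting closed forms in $\Delta$. I would first locate the onset: since $\tilde z_2<\Delta<\tilde T=\tilde z_2+\tau<\tilde z_3$ we have $J=j_\Delta=2$, so by the definition of the cycle length map (and Proposition~\ref{prop:prop4.2}) it is enough to produce the zeros $z_{\Delta,3},z_{\Delta,4}$ and show $T(\Delta)=z_{\Delta,4}-\tilde z_2<\infty$. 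From the formulas established just above the statement, $x^{(\Delta)}\le0$ on all of $[\tilde z_2,\Delta+\sigma]$: it equals $\tilde x\le 0$ on $[\tilde z_2,\Delta]$, runs monotonically toward the negative level $-\beta_U+a$ on $[\Delta,\tilde T]$, and increases strictly from $x^{(\Delta)}(\tilde T)<0$ to $x^{(\Delta)}(\Delta+\sigma)<0$ on $[\tilde T,\Delta+\sigma]$, the last inequality being exactly the condition defining $I_{FNRN}$. Hence the only zero of $x^{(\Delta)}$ in $[\tilde z_2,\Delta+\sigma]$ is $\tilde z_2=z_{\Delta,2}$.

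For $t\ge\Delta+\sigma$ the solution equals $x^{\chi}$ with $\chi=x^{(\Delta)}_{\Delta+\sigma}$, which is negative on $(\Delta+\sigma-\tau,\Delta+\sigma]\subset(\tilde z_2,\Delta+\sigma]$ and so lies in $Z_0$ with no zero in $(-\tau,0)$. Its delayed value keeps it on $x'=-x+\beta_L$, whence $x^{(\Delta)}(t)=\beta_L+(x^{(\Delta)}(\Delta+\sigma)-\beta_L)e^{-(t-\Delta-\sigma)}$ rises strictly to its first, up-crossing, zero $z_{\Delta,3}=\Delta+\sigma+\ln\frac{\beta_L-x^{(\Delta)}(\Delta+\sigma)}{\beta_L}$, finite because $x^{(\Delta)}(\Delta+\sigma)<0<\beta_L$. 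Since $x^{(\Delta)}$ is increasing on $[z_{\Delta,3},z_{\Delta,3}+\tau]$, the second alternative of Theorem~\ref{t:3.3} gives $x^{(\Delta)}(s)=\tilde x(s-z_{\Delta,3}+\tilde z_1)$ for $s\ge z_{\Delta,3}$; reading off the next down-crossing yields $z_{\Delta,4}=z_{\Delta,3}+(\tilde z_2-\tilde z_1)$ together with the matching $x^{(\Delta)}(z_{\Delta,4}+t)=\tilde x(\tilde z_2+t)$, so $T(\Delta)=z_{\Delta,4}-\tilde z_2=z_{\Delta,3}-\tilde z_1<\infty$. Substituting $x^{(\Delta)}(\Delta+\sigma)$, pulling $e^{\tilde z_1-\Delta-\sigma+\tilde T}$ out of the logarithm, and using $(1-e^{-\sigma})e^{\sigma}=e^{\sigma}-1$ turns this into formula~\eqref{e:prop5.9}; the argument of the logarithm equals $\frac{\beta_L-x^{(\Delta)}(\Delta+\sigma)}{\beta_L}e^{\Delta+\sigma-\tilde z_1-\tilde T}\in(0,1)$, giving $T(\Delta)<\tilde T$.

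For the extrema, on $[z_{\Delta,3},z_{\Delta,4}]$ the solution is the shifted periodic arc $\tilde x(\cdot-z_{\Delta,3}+\tilde z_1)$, which is nonnegative and attains $\overline x$ at $z_{\Delta,3}+\tau$ (this lies inside the interval since $\tilde z_2-\tilde z_1>\tau$), while on $[\tilde z_2,\Delta+\sigma]$ the solution is $\le 0<\overline x$; hence $\overline{x}_\Delta=\overline x$. The same monotonicity facts give $\underline{x}_\Delta=\min_{[\tilde z_2,\Delta+\sigma]}x^{(\Delta)}=\min\{\tilde x(\Delta),x^{(\Delta)}(\tilde T)\}$, and both $\tilde x(\Delta)=-\beta_U+\beta_Ue^{\tilde z_2-\Delta}$ and $x^{(\Delta)}(\tilde T)=\underline x+a(1-e^{\Delta-\tilde T})$ exceed $\underline x$ for $\Delta<\tilde T$, so $\underline{x}_\Delta>\underline x$. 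Since each of these two quantities is strictly decreasing in $\Delta$ (the first because $\tilde x$ is falling past $\tilde z_2$, the second as noted above the statement), their minimum $\underline{x}_\Delta$ is strictly decreasing, which is the last assertion.

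I expect the one genuinely delicate step to be the return bookkeeping in the second paragraph: verifying $\chi\in Z_0$ and invoking the correct (increasing) alternative of Theorem~\ref{t:3.3}, so as to pin down the shift $z_{\Delta,4}=z_{\Delta,3}+(\tilde z_2-\tilde z_1)$ and the matching $x^{(\Delta)}(z_{\Delta,4}+t)=\tilde x(\tilde z_2+t)$. Once this is secured, everything else is the elementary algebra of exponentials already prepared in the computations preceding the statement.
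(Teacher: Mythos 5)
Your proof is correct and structurally the same as the paper's: establish $x^{(\Delta)}<0$ on $(\tilde{z}_2,\Delta+\sigma]$, follow the rising branch governed by $x'=-x+\beta_L$ to its first zero $z_{\Delta,3}$, conclude that from $z_{\Delta,3}$ onward the solution is a translate of $\tilde{x}$ (whence $T(\Delta)=z_{\Delta,3}-\tilde{z}_1<\infty$ and $\overline{x}_{\Delta}=\overline{x}$), and finish with the exponential algebra that yields \eqref{e:prop5.9}; your invocation of Theorem~\ref{t:3.3} where the paper matches onto $\tilde{x}$ directly is an immaterial difference. The one place you genuinely depart from the paper is the minimum, and there your version is the more careful one. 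The paper's proof asserts $\underline{x}_{\Delta}=x^{(\Delta)}(\tilde{T})$; this identification is valid only when $x^{(\Delta)}$ decreases on $[\Delta,\tilde{T}]$, i.e.\ when $\tilde{x}(\Delta)\ge-\beta_U+a$ (equivalently $ae^{\Delta}\le\beta_Ue^{\tilde{z}_2}$, equivalently $\Delta\le\overline{\delta}$ with $\overline{\delta}$ as in Proposition~\ref{c:prop8}). When instead $\Delta>\overline{\delta}$, the solution rises during $[\Delta,\tilde{T}]$ and the minimum is $\tilde{x}(\Delta)<x^{(\Delta)}(\tilde{T})$; by Remark~\ref{rem:rem3} this happens for every $\Delta\in I_{FNRN}$ as soon as $\beta_Ue^{\sigma-\tau}<a<\beta_U$ (for instance $\tau=5$, $\sigma=0.5$, $\beta_L=0.1$, $\beta_U=1$, $a=0.99$, for which also $\delta_1>0$, so that $I_{FNRN}=[\tilde{T}-\sigma,\tilde{T})$ and the paper's identification fails on all of it). Your formula $\underline{x}_{\Delta}=\min\{\tilde{x}(\Delta),x^{(\Delta)}(\tilde{T})\}$ is correct in both regimes and matches what the paper itself writes in the adjacent case $\mathbf{FNRP}$ (Proposition~\ref{c:prop10}); and, as you note, both candidates exceed $\underline{x}$ and are strictly decreasing in $\Delta$, so the proposition's actual conclusions ($\underline{x}_{\Delta}>\underline{x}$ and strict decrease of $\Delta\mapsto\underline{x}_{\Delta}$) hold either way. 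In short, your proposal proves the statement and, in passing, repairs a small oversight in the paper's own proof.
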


The interval $I_{FNRN}$ in Proposition \ref{c:prop9}  is nonempty if and only if $\delta_1>-\sigma$, which is always the case. To see this observe that
\[
e^{\delta_1}=e^{-\sigma}\dfrac{\beta_L -\underline{x}}{\beta_L+a(1-e^{-\sigma})}>e^{-\sigma}
\]
since
\[
-\underline{x}=\beta_U(1-e^{-\tau})>a(1-e^{-\sigma}).
\]
We next proceed to  the subcase $\mathbf{FNRP}$. Note that the interval $I_{FNRP}$  is empty if and only if $\delta_1\ge 0$.

\begin{proposition}\label{c:prop10} If $\Delta\in I_{FNRP}$ then $T(\Delta)<\infty$, $\overline{x}_{\Delta}>\overline{x}$, $\underline{x}_{\Delta}>\underline{x}$, and
\begin{equation}
T(\Delta)=\tilde{T}+\ln\left(1+\dfrac{a(e^\sigma-1)}{\beta_U}e^{\Delta-\tilde{z}_2-\tilde{T}}+
\dfrac{a(\beta_L+\beta_U)e^{\tau+\tilde{z}_1-\tilde{z}_2}}{\beta_U(\beta_L+a)}\left(e^{\Delta-\tilde{z}_1-\tilde{T}}-1\right)\right).
\label{e:prop11}
\end{equation}
The map $I_{FNRP}\ni\Delta\mapsto\underline{x}_{\Delta}\in\mathbb{R}$ is strictly decreasing
as is the map
$I_{FNRP}\ni\Delta\mapsto \overline{x}_{\Delta}\in\mathbb {R}$.
\end{proposition}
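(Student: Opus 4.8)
The plan is to follow $x^{(\Delta)}$ explicitly through its successive feedback regimes, locate the two zeros that bracket the return to $\tilde O$, and read off all four assertions from closed forms. For $\Delta\in I_{FNRP}$ recall from the computations in Subsection~\ref{sec:fall-rise} that $x^{(\Delta)}(\Delta)=\tilde x(\Delta)<0$; that on $[\Delta,\tilde T]$ the solution obeys $x'=-x-\beta_U+a$, hence stays strictly below its negative asymptote $-\beta_U+a<0$, so $x^{(\Delta)}<0$ there with $x^{(\Delta)}(\tilde T)=\underline x+a(1-e^{\Delta-\tilde T})$; and that on $[\tilde T,\Delta+\sigma]$ the delayed value is still negative, so $x'=-x+\beta_L+a$ and $x^{(\Delta)}$ increases strictly from $x^{(\Delta)}(\tilde T)<0$ to $x^{(\Delta)}(\Delta+\sigma)\ge0$ (the defining inequality of subcase $\mathbf P$). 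First I would conclude that $x^{(\Delta)}$ has exactly one, transversal, zero $z_{\Delta,3}\in(\tilde T,\Delta+\sigma]$ on $[\Delta,\Delta+\sigma]$, obtained by solving the explicit exponential, and that from $\Delta+\sigma-\tau<\tilde T<z_{\Delta,3}$ the segment $\chi=x^{(\Delta)}_{\Delta+\sigma}$ has a single sign change, so $\chi\in Z_0$.

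Next I would track the post-pulse arc. For $t>\Delta+\sigma$ the pulse is off and the delayed value remains negative until $t=z_{\Delta,3}+\tau$, so $x^{(\Delta)}$ rises with $x'=-x+\beta_L$ and attains its maximum $\overline x_{\Delta}=x^{(\Delta)}(z_{\Delta,3}+\tau)$ exactly when the delayed argument reaches $z_{\Delta,3}$; thereafter $x'=-x-\beta_U$ and the solution falls to its next (falling) zero $z_{\Delta,4}=z_{\Delta,3}+\tau+\ln\frac{\overline x_{\Delta}+\beta_U}{\beta_U}$. Since $\chi\in Z_0$ and $x^{(\Delta)}$ is strictly decreasing on $[z_{\Delta,4},z_{\Delta,4}+\tau]$, Theorem~\ref{t:3.3} gives $x^{(\Delta)}(z_{\Delta,4}+t)=\tilde x(t-\tau)=\tilde x(\tilde z_2+t)$ for $t\ge0$, so $x^{(\Delta)}$ is back on $\tilde O$; in particular $T(\Delta)<\infty$, and with $J=2$ Proposition~\ref{prop:prop4.2} yields $T(\Delta)=z_{\Delta,4}-\tilde z_2$. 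The spacing bound $z_{j+1}-z_j>\tau$ guarantees no extra zero intervenes before $z_{\Delta,4}$.

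To get the formulas and inequalities, I would solve for $z_{\Delta,3}$ on $[\tilde T,\Delta+\sigma]$ and then evolve from that zero with rate $\beta_L+a$ for the boosted duration $\alpha:=\Delta+\sigma-z_{\Delta,3}$ and with rate $\beta_L$ for duration $\tau-\alpha$; this collapses to the clean identity $\overline x_{\Delta}=\overline x+ae^{-\tau}(e^{\alpha}-1)$, which makes $\overline x_{\Delta}>\overline x$ immediate (as $\alpha>0$ in subcase $\mathbf P$) and reduces the cycle length to $T(\Delta)=2\tau+\ln\frac{\beta_Le^{\tilde z_1}+ae^{\Delta-\tilde T}}{\beta_L+a}+\ln\frac{\overline x_{\Delta}+\beta_U}{\beta_U}$. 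Substituting $\beta_L-\underline x=\beta_Le^{\tilde z_1}$, $\overline x+\beta_U=\beta_Ue^{\tilde z_2-\tilde z_1-\tau}$, and $\tilde T=\tilde z_2+\tau$ (Corollary~\ref{cor:max-min-per}) and simplifying should reproduce \eqref{e:prop11}. For the minimum, since $x^{(\Delta)}<0$ on $[\tilde z_2,z_{\Delta,3}]$ and rises afterward, the minimum over $[\tilde z_2,z_{\Delta,4}]$ is attained on $[\Delta,\tilde T]$ and equals $\underline x_{\Delta}=\min\{\tilde x(\Delta),x^{(\Delta)}(\tilde T)\}$; both arguments exceed $\underline x$ (equality only at $\Delta=\tilde T$), giving $\underline x_{\Delta}>\underline x$.

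Finally, for monotonicity: $\tilde x(\Delta)=-\beta_U+\beta_Ue^{\tilde z_2-\Delta}$ and $x^{(\Delta)}(\tilde T)=\underline x+a(1-e^{\Delta-\tilde T})$ are each strictly decreasing on $I_{FNRP}$, so their pointwise minimum $\underline x_{\Delta}$ is strictly decreasing. For $\overline x_{\Delta}=\overline x+ae^{-\tau}(e^{\alpha}-1)$ the monotonicity reduces to the sign of $\frac{d\alpha}{d\Delta}$; from $\alpha=\Delta+\sigma-\tilde T-\ln\frac{\beta_Le^{\tilde z_1}+ae^{\Delta-\tilde T}}{\beta_L+a}$ one computes $\frac{d\alpha}{d\Delta}=\frac{\beta_Le^{\tilde z_1}}{\beta_Le^{\tilde z_1}+ae^{\Delta-\tilde T}}>0$. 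This is the step that most warrants care: by my reckoning the boosted window $\alpha$, and hence $\overline x_{\Delta}$, comes out strictly \emph{increasing} in $\Delta$ rather than decreasing, so before finalizing I would re-examine the orientation asserted for the map $\Delta\mapsto\overline x_{\Delta}$. The chief obstacle throughout is the disciplined bookkeeping of the three feedback switches (at $\tilde T$ inside the pulse, at the pulse end $\Delta+\sigma$, and at $z_{\Delta,3}+\tau$) together with the sign control on the delayed argument that pins down each arc.
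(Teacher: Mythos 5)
Your proof is correct and follows essentially the same route as the paper's appendix proof: the same zeros $z_{\Delta,3}\in(\tilde T,\Delta+\sigma]$ and $z_{\Delta,4}=z_{\Delta,3}+\tau+\ln\bigl((\overline x_\Delta+\beta_U)/\beta_U\bigr)$, the same identity $\overline x_\Delta=\overline x+ae^{-\tau}\bigl(e^{\sigma+\Delta-z_{\Delta,3}}-1\bigr)$, the same $\underline x_\Delta=\min\{\tilde x(\Delta),x^{(\Delta)}(\tilde T)\}$, and the same reduction of $T(\Delta)$ to \eqref{e:prop11}. Your flagged concern about the orientation of $\Delta\mapsto\overline x_\Delta$ is vindicated: since $(\beta_L+a)e^{z_{\Delta,3}-\Delta}=\beta_Le^{\tilde z_1+\tilde T-\Delta}+a$ is strictly decreasing in $\Delta$, the boosted window $\sigma+\Delta-z_{\Delta,3}$ grows with $\Delta$, so $\overline x_\Delta$ is strictly \emph{increasing} on $I_{FNRP}$ --- exactly what the paper's own appendix proof concludes and what Table~\ref{table:last} records with $\nearrow$; the word ``decreasing'' for $\overline x_\Delta$ in the proposition statement is thus an error in the statement, not in your computation. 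One cosmetic caveat: on $[\Delta,\tilde T]$ the solution need not stay below the asymptote $-\beta_U+a$ (it is monotone toward it from whichever side $\tilde x(\Delta)$ starts, according to the sign of $\beta_Ue^{\tilde z_2-\Delta}-a$), but your use of the minimum of both endpoint values makes the argument insensitive to this.
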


\section{The cycle length map}\label{sec:prop-reset}

The behaviour of the cycle length map $[0,\tilde{T})\ni\Delta\mapsto T(\Delta)\in\mathbb{R}$, illustrated in Figures~\ref{fig:tp-1}--\ref{fig:tp-2}, is different in each of the (sub-) cases discussed in Sections \ref{sec:rising}-\ref{sec:fall-rise}. Each of these cases corresponds to $\Delta$ varying in one of the subintervals  $I_{RNRN},\ldots,I_{FNRP}$ of $[0,\tilde{T})$.   If $\Delta$ increases from $0$ to $\tilde{T}$ then it travels through %one sequence after another through
those subintervals
which are not empty for the given parameter vector $(\tau,\beta_L,\beta_U,a,\sigma)$ (with
$0<\tau,-\beta_U<0<\beta_L,0<a<\beta_U,0<\sigma\le\tau)$.
In other words, for each parameter vector we have a finite sequence of non-empty subintervals, ordered by, say, their left endpoints, whose union is $[0,\tilde{T})$. Below we describe the possible scenarios, in terms of sequences of cases and subcases. We also provide tables summarizing the behavior of the minimum, maximum and the cycle length.

\begin{figure}[htb]
\centering
\includegraphics{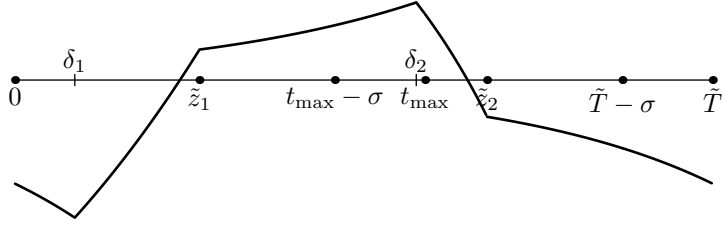}
\caption{A graph of the function $[0,\tilde{T}]\ni\Delta\mapsto T(\Delta)$. The straight line represents the graph of $[0,\tilde{T}]\ni\Delta\mapsto \tilde{T}$. We indicated with dots the values of the boundaries of all cases, and with lines the boundaries of subcases. Here the parameters are $\tau=1$, $\beta_U=0.8$, $\beta_L=0.4$,  and
$\sigma=0.4$, $a=0.2$, so that $\delta_1>0$ and $\delta_2<t_{\max}$. The figure corresponds to the following sequence of subcases: $\mathbf{RNRN}$, $\mathbf{RNRP}$, $\mathbf{RPRP}$, $\mathbf{RPFP}$, $\mathbf{RPFN}$, $\mathbf{FPFN}$, $\mathbf{FNFN}$, $\mathbf{FNRN}$.   }
\label{fig:tp-1}
\end{figure}

\begin{figure}[htb]
\centering
\includegraphics{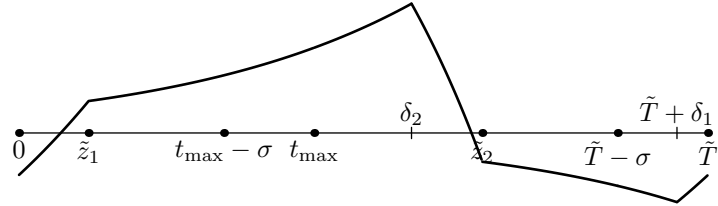}
\caption{As in Figure~\ref{fig:tp-1}, but with $\beta_L=1.4$, which gives $\delta_1\in (-\sigma,0)$, $\delta_2\in (t_{\max},\tilde{z}_2)$,  and  the following sequence of subcases: $\mathbf{RNRP}$, $\mathbf{RPRP}$, $\mathbf{RPFP}$, $\mathbf{FPFP}$, $\mathbf{FPFN}$, $\mathbf{FNFN}$, $\mathbf{FNRN}$, $\mathbf{FNRP}$.}
\label{fig:tp-2}
\end{figure}

Before doing so it may be convenient to collect a few facts about the quantities  $\delta_1,\delta_2$ which, together with $\tilde{z}_1,\tilde{z}_2,t_{\max},\sigma,\tilde{T}$, determine the intervals $I_{RNRN},\ldots,I_{FNRP}$. Recall that $\delta_1$ was defined by \eqref{d:delta1} and that $\delta_2$ was defined by \eqref{d:delta2}. From $\beta_U>a$ and $\tau\ge \sigma$ we have $\delta_1>-\sigma$ and $\delta_2<\tilde{z}_2$.
The condition  $\delta_1>0$ is equivalent to
\begin{equation*}
 (\beta_L+a)(e^{\sigma}-1)<\beta_U(1-e^{-\tau}),
\end{equation*}
and the case when $\delta_1\in (-\sigma,0]$   is described by
\begin{equation*}
a(1-e^{-\sigma})< \beta_U(1-e^{-\tau})\le (\beta_L+a)(e^{\sigma}-1).
\end{equation*}
Next,  the condition  $\delta_2< t_{\max}$ is equivalent to
\begin{equation*}
\beta_L(1-e^{-\tau})< (\beta_U-a)(e^{\sigma}-1)
\end{equation*}
while the condition $t_{\max}\le \delta_2<\tilde{z}_2$  is equivalent to
\[
0< (\beta_U-a)(e^{\sigma}-1)\le \beta_L(1-e^{-\tau}).
\]

Since $\delta_1<\tilde{z}_1$  and  $\delta_2>t_{\max}-\sigma$, we conclude that the intervals
\[
I_{RNRP}=[\min\{0,\delta_1\},\tilde{z}_1),\quad I_{RPFP}=(t_{\max}-\sigma,\min\{t_{\max},\delta_2\}]
\]
are always nonempty.
We have $I_{RPRP}=[\tilde{z}_1,t_{\max}-\sigma]$, $t_{\max}=\tilde z_1+\tau$, and  $\tau\ge \sigma$, thus each sequence of cases contains  %{\color{red}The following needs to be cleaned up and clarified for the reader.}
$$
\mathbf{RNRP}, \quad \mathbf{RPRP},\quad\text{and}\quad\mathbf{RPFP}.
$$
\begin{table}
 \begin{tabular}{|l|c|c|c|c|c|c|c|}
   \hline
   & \multicolumn{7}{|c|}{Pulse location when $\delta_2< t_{\max}$} \\
  \hline \hline
      & $\mathbf{RNRP}$ & $\mathbf{RPRP}$ & $\mathbf{RPFP}$ &$\mathbf{RPFN}$ & $\mathbf{FPFN}$ & $\mathbf{FNFN}$& $\mathbf{FNRN}$\\
   \hline
   $\underline x_\Delta$  & U & U & U & $\uparrow$  $\nearrow$ & $\uparrow$  $\nearrow$ & $\uparrow$ & $\uparrow$ $\searrow$\\ \hline
   $\overline x_\Delta$ & $\uparrow$ $\nearrow$ & $\uparrow$ $\nearrow$& $\uparrow$ $\searrow$ &$\uparrow$ $\searrow$  &U& U & U \\ \hline
   $T(\Delta)$ & $\nearrow$  & $\uparrow$ $\nearrow$ & $\uparrow$ $\nearrow$ & $\searrow$ & $\searrow$ &$\downarrow$ $\searrow$ & $\downarrow$ $\searrow$ \\
  \hline
 \end{tabular}
 \caption{Summary of the effects of a positive pulse $(a > 0, \Delta, \sigma)$ at different times during the limit cycle on the limit cycle minimum ($\underline x$), maximum ($\overline x$), and period ($\tilde{T}$). ``U" denotes unchanged, while $\uparrow$ means increased, $\downarrow$ means decreased, $\nearrow$ means that the value as a function of $\Delta$ is increasing, and $\searrow$ means that the values as a function of $\Delta$ is decreasing.} \label{table:pos-a-pulse-summary}
\end{table}

For $\delta_1>0$ each sequence starts with the case $\mathbf{RNRN}$ and ends with the case $\mathbf{FNRN}$.

For $\delta_1=0$ each sequence starts with the case $\mathbf{RNRP}$ and ends with the case $\mathbf{FNRN}$.

For $\delta_1<0$ each sequence starts with the case $\mathbf{RNRP}$ and ends with the case $\mathbf{FNRP}$.

If $\delta_1>0$ and $0\le\Delta<\delta_1$ then we are in case $\mathbf{RNRN}$.
If $\Delta$ grows from  $\max\{0,\delta_1\}$ to
$\min\{t_{\max},\delta_2\}$  then  we have the  sequence of cases:
$$
\mathbf{RNRP}, \quad \mathbf{RPRP},\quad\text{and}\quad\mathbf{RPFP}.
$$
If $\Delta$ grows from
$\min\{t_{\max},\delta_2\}$ to $\min\{0,\delta_1\}+\tilde{T}$
and if $\delta_2<t_{\max}$ then we obtain the subsequent cases
\[
\mathbf{RPFN},\quad\mathbf{FPFN},\quad\mathbf{FNFN},\quad\mathbf{FNRN}.
\]
In case  $\delta_2=t_{\max}$ we obtain
\[
 \mathbf{FPFP},\quad \mathbf{FPFN},\quad\mathbf{FNFN},\quad \mathbf{FNRN}.
\]
The same sequence results in case $t_{\max}<\delta_2$. So  we have two scenarios for $\Delta$ beyond the interval $I_{RPFP}$ and below $\min\{0,\delta_1\}+\tilde{T}$, which is the endpoint $\tilde{T}$ of the domain of the cycle length map if $\delta_1\ge0$, while for $\delta_1<0$ the sequence of cases is completed by
$$
\mathbf{FNRP}.
$$

\begin{table}
 \begin{tabular}{|l|c|c|c|c|c|c|c|c|}
   \hline
   & \multicolumn{8}{|c|}{Pulse location when $\delta_2\ge t_{\max}$} \\
  \hline \hline
        & $\mathbf{RNRP}$ & $\mathbf{RPRP}$ & $\mathbf{RPFP}$ &$\mathbf{RPFN}$ & $\mathbf{FPFP}$& $\mathbf{FPFN}$ & $\mathbf{FNFN}$& $\mathbf{FNRN}$\\
   \hline
   $\underline x_\Delta$  & U & U & U & $\uparrow$  $\nearrow$ & U & $\uparrow$  $\nearrow$& $\uparrow$ & $\uparrow$ $\searrow$\\ \hline
   $\overline x_\Delta$ & $\uparrow$ $\nearrow$ & $\uparrow$ $\nearrow$& $\uparrow$ $\searrow$ &$\uparrow$ $\searrow$  &U& U& U & U \\ \hline
   $T(\Delta)$  & $\nearrow$  & $\uparrow$ $\nearrow$ & $\uparrow$ $\nearrow$ & $\searrow$ & $\uparrow$ $\nearrow$ &$\searrow$ & $\downarrow$ $\searrow$ & $\downarrow$ $\searrow$ \\
  \hline
 \end{tabular}
  \caption{Summary of the effects of a positive pulse when  $t_{\max}\le \delta_2$.} \label{table:pos-a-pulse-summary2}
\end{table}

\begin{table}[htb]
\centering
\begin{tabular}{|l|c|}
\hline
& $\delta_1>0$\\
\hline
 \hline
&  $\mathbf{RNRN}$\\ \hline
 $\underline x_\Delta$  & U\\ \hline
$\overline x_\Delta$& U \\ \hline
 $T(\Delta)$ & $\downarrow$  $\searrow$ \\ \hline
\end{tabular} \quad
\begin{tabular}{|l|c|}
\hline
& $\delta_1< 0$\\
\hline
 \hline
& $\mathbf{FNRP}$\\ \hline
 $\underline x_\Delta$  & $\uparrow$ $\nearrow$\\ \hline
$\overline x_\Delta$& $\uparrow$ $\nearrow$ \\ \hline
 $T(\Delta)$ &   $\nearrow$ \\ \hline
\end{tabular}
\caption{Summary of the effects of a positive pulse in the remaining cases}\label{table:last}
\end{table}

\section{A therapy plan}\label{sec:therapy}

We first describe the concept of the therapy plan,  in case the evolution of the number of cells in the bloodstream (of a patient, without medical treatment)  is governed by Eq.~\eqref{e:pl-sys}, with the production function $f$ given by~\eqref{e:pcnl}, and $b_U<\gamma\theta<b_L$. For convenience we shall work not with the original variables but with the transformed quantities from Section 2, namely, Eq.~\eqref{e:pl-sys-dimen1}
with $f$ satisfying Eq.~\eqref{e:pcnl1} for $-\beta_U<0<\beta_L$. Then the variable $x(t)$ still represents the number of blood cells (of a certain type) in a patient, at time $t$.

The reader will find it helpful to consult Figure \ref{fig:therapy} when following the argument below.

\begin{figure}[htb]
\centering
\includegraphics{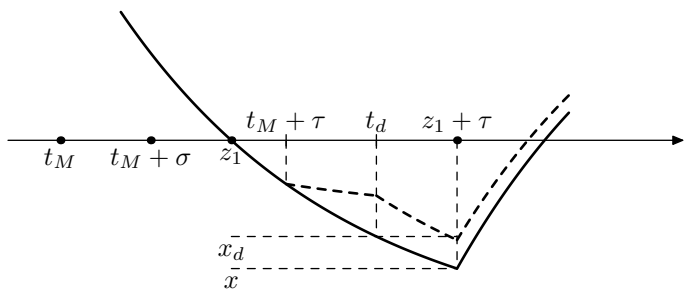}

\caption{A schematic representation of the ideas behind the arguments leading to a therapy plan.  The unperturbed limit cycle is the solid line while the desired limit cycle due to a perturbation is the dashed line.}
\label{fig:therapy}

\end{figure}

\medskip

Suppose there is a critical level $x_d$, larger than the minimum $\underline{x}$ of the
periodic oscillation in the patient without treatment. We want to find a therapy plan which consists of medication at certain times $t=t_M$ (which are to be determined) in such a way that the cell density in the patient never falls below the critical level.

\medskip

Medication at a time $t=t_M$ results in the begin of the production of more (precursors of) cells in the bone marrow, and this increased production lasts for a time interval of duration $\sigma>0$, from the time $t_M$ until the time $t_M+\sigma$. As in Section 4 the effect of medication at $t=t_M$ can be expressed by a 'temporal change' of the production function $f$, for example, by replacing $f$ by a sum $f_a=f+a$ with $a>0$ as long as $t_M\le t-\tau\le t_M+\sigma$. (Alternatively, one might replace $f$ by a multiple $f_a=af$ with $a>1$.) $a$ depends on the dose of the medication. Because of the delay $\tau$ due to the production process the number of cells in the bloodstream will begin to deviate from their number without treatment not earlier than $t\ge t_M+\tau$, where $t_M+\tau$ equals $\Delta$ in the cases studied in Section 5.

\medskip

We begin with  a simple situation and assume that for the time interval $[-\tau,0]$ in the past the cell density  in a patient is known, for example, by measurement, and
$$
0<x(t)\quad\text{for}\quad -\tau<t\le0.
$$
$t=0$ stands for the present time.
Then we use Eq.~\eqref{e:pl-sys-dimen1} in order to predict how the cell density would  evolve in the patient without treatment: We compute the solution $y(t)$, $t\ge0$, of  Eq.~\eqref{e:pl-sys-dimen1} with initial data $y(t)=x(t)$, $-\tau\le t\le0$. The solution $y(t)$ will be a translate of the unique slowly oscillating periodic solution $\tilde{x}$ of Eq.~\eqref{e:pl-sys-dimen1}. There is a first zero $z_1=z_1(y(0))>0$ of $y$, and there exists a unique time
$t_d$ between $z_1$ and $z_1+\tau$, at which $y$ reaches the critical level $x_d$,  $y(t_d)=x_d$, from above. (At $z_1+\tau$ $y$ attains its minimum value $\underline{x}<x_d$.)

\medskip

Having predicted the time $t_d$ we define $t_M=t_d-\tau-\sigma$ as the time of medication. If $t_M$ is positive (is in the future), then it is not too late for medication. After medication at $t=t_M$ the cell density  in the patient represented by $x(t)$ will equal $y(t)$ for $-\tau\le t\le t_M+\tau=t_d-\sigma$,
because of the delay in Eq.~\eqref{e:pl-sys-dimen1}. For $t_d-\sigma\le t\le t_d$ the release of cells into the circulation will be increased according to
$$
x'(t)=f(x(t-\tau))+a-x(t)=-\beta_U+a-x(t)
$$
while for $t\ge t_d$,  Eq.~\eqref{e:pl-sys-dimen1} holds once again.

\medskip

The question is whether for a range of parameters $a>0$ this can be done
 in such a way that for $t_d-\sigma\le t< z_1+\tau$ the solution $x(t)$ satisfies
$x_d\le x(t)<0$.

\medskip

If yes then $x(t)$ would increase after time $z_1+\tau$ until there is a zero  $z_2$, due to Eq.~\eqref{e:pl-sys-dimen1}, and would coincide on $[z_2,z_2+\tau]$ with the piece of the unique slowly oscillating periodic solution  $\tilde{x}$ of Eq.~\eqref{e:pl-sys-dimen1} before its maxima.

\medskip

Upon that the whole process can be repeated. It would result in a periodic therapy plan and a periodic solution $x(t)$ which never falls below the critical value $x_d$ and has a period shorter than the period of $\tilde{x}$. (This latter property comes from $x(z_1+\tau)\ge x_d>\underline{x}$.)

\medskip

Below we show that there exist parameters $\beta_L,\beta_U,\tau,\sigma$ and $x_d\in(\underline{x},0)$ and $a>0$ for which the program just described can be carried out. As initial data we consider continuous functions $\phi\colon[-\tau,0]\to\mathbb{R}$ with $0<\phi(t)$ for $-\tau<t\le0$.  Then for some $q$, $0<q<1$, $q\overline{x}<\phi(0)$. The solution $y$ of the initial value problem
$$
y'(t)=f(y(t-\tau))-y(t)\,\,\text{for}\,\,t>0,\,\,y_0=\phi
$$
has a first zero at
$$
z_1=z_1(\phi(0))=\ln\dfrac{\phi(0)+\beta_U}{\beta_U}
$$
and strictly decreases on $[z_1,z_1+\tau]$ to the value $\underline{x}$. For $\underline{x}<x_d<0$ we find a unique time $t=t_d=t_d(\phi)$ in $(z_1,z_1+\tau)$ with  $y(t_d)=x_d$, namely,
$$
t_d=z_1+\ln\dfrac{\beta_U}{x_d+\beta_U}.
$$

\medskip

Next we show that for parameters $-\beta_U<0<\beta_L,\tau>0,q\in(0,1)$ and $\sigma>0$ sufficiently small, and $x_d\in(\underline{x},0)$ sufficiently close to $\underline{x}$ we have
\begin{equation}
t_d-\tau-\sigma>0.
\label{tMpos}
\end{equation}
The inequality \eqref{tMpos} is equivalent to
$$
\tau+\sigma<\ln\dfrac{\phi(0)+\beta_U}{\beta_U}+\ln\dfrac{\beta_U}{x_d+\beta_U}=
\ln\dfrac{\phi(0)+\beta_U}{x_d+\beta_U},
$$
which follows from
$$
\tau+\sigma<\ln\dfrac{q\,\overline{x}+\beta_U}{x_d+\beta_U}.
$$
The preceding inequality can be achieved for $\sigma>0$ sufficiently small and $x_d>\underline{x}$ sufficiently close to $\underline{x}$ provided we have
\begin{equation}
\tau<\ln\dfrac{q\,\overline{x}+\beta_U}{\underline{x}+\beta_U}.
\label{(suftMpos)}
\end{equation}
We verify this: Using the equations for $\overline{x}$ and $\underline{x}$ from Corollary 3.2 we see that \eqref{(suftMpos)} is equivalent to
\begin{align*}
\tau & <   \ln\dfrac{q\,\beta_L(1-e^{-\tau})+\beta_U}{\beta_U-\beta_U(1-e^{-\tau})}\\
& =   \tau+\ln\dfrac{q\,\beta_L(1-e^{-\tau})+\beta_U}{\beta_U}\\
\end{align*}
or,
$$
\beta_U<q\,\beta_L(1-e^{-\tau})+\beta_U,
$$
which is equivalent to
\begin{equation}
0<q\,\beta_L(1-e^{-\tau}).
\end{equation}

\medskip

From now on assume that the  parameters $-\beta_U<0<\beta_L,\tau>0,\sigma>0$, and $q\in(0,1),x_d\in(\underline{x},0)$ satisfy \eqref{tMpos}. Assume in addition for simplicity that $\sigma$ is so small that we have
\begin{equation}
z_1<t_d-\sigma.
\end{equation}
Notice that $t_d-z_1=\ln\dfrac{\beta_U}{x_d+\beta_U}$ does not depend on $\phi$. We now define
$$
t_M=t_M(\phi)=t_d(\phi)-\tau-\sigma\,\,(>0)
$$
as the time of medication. For parameters $a>0$ we consider the
continuous function $x:[-\tau,z_1+\tau]\to\mathbb{R}$ which coincides with $y(t)$ for $-\tau\le t\le t_d-\sigma$ and satisfies
\begin{equation*}
x'(t) =
\left\{
  \begin{array}{ll}
    f(x(t-\tau))+a-x(t)=-\beta_U+a-x(t)\quad&\text{for}\,\,t_d-\sigma<t<t_d,\\
   f(x(t-\tau))-x(t)=-\beta_U-x(t)\quad &\text{for}\,\,t_d<t<z_1+\tau.
  \end{array}
\right.
\end{equation*}
It follows that
$$
x(t_d-\sigma)=y(t_d-\sigma)=-\beta_U+\beta_Ue^{-(t_d-\sigma-z_1)}=-\beta_U\left(1-e^{-\sigma}\dfrac{x_d+\beta_U}{\beta_U}\right).
$$
Similarly we get for $t\in[t_d-\sigma,t_d]$ that
\begin{align*}
x(t) & =  -\beta_U+a+\bigl(x(t_d-\sigma)-(-\beta_U+a))e^{-(t-(t_d-\sigma)\bigr)}\\
& =   -\beta_U+a+(y(t_d-\sigma)-(-\beta_U+a))e^{-(t-(t_d-\sigma))}\\
& =  y(t)+a(1-e^{-(t-(t_d-\sigma))})\ge y(t)\ge x_d
\end{align*}
which shows that $x$ is monotone and above $x_d$ on this interval. Using
$$
x(t_d-\sigma)=y(t_d-\sigma)<0
$$
and monotonicity we conclude that we have
$x(t)<0$ on $[t_d-\sigma,t_d]$ if and only if $x(t_d)<0$. Also,
$$
x(t_d)=y(t_d)+a\bigl(1-e^{-(t_d-(t_d-\sigma))}\bigr)=x_d+a(1-e^{-\sigma})
$$
which gives $x(t_d)<0$ if and only if
\begin{equation}
x_d+a(1-e^{-\sigma})<0.
\label{xdneg}
\end{equation}
We shall come back to this later, and turn to
\begin{align*}
x(z_1+\tau) & =  -\beta_U+\bigl(x(t_d)+\beta_U\bigr)e^{-(z_1+\tau-t_d)}\\
& =  -\beta_U+\bigl(y(t_d)+a(1-e^{-\sigma})+\beta_U\bigr)e^{-(z_1+\tau-t_d)}\\
& =  y(z_1+\tau)+a(1-e^{-\sigma})e^{-(z_1+\tau-t_d)}\\
& =  \underline{x}+a(1-e^{-\sigma})e^{-(z_1+\tau-t_d)}>\underline{x}.
\end{align*}
It follows that there is a unique $a=a_d>0$ so that
$$
x(z_1+\tau)=x_d>\underline{x},
$$
namely,
\begin{equation}
a_d  =  \dfrac{x_d-\underline{x}}{(1-e^{-\sigma})e^{-(z_1+\tau-t_d)}}
 =  \dfrac{(x_d-\underline{x})e^{\tau}(x_d+\beta_U)}{\beta_U(1-e^{-\sigma})}.
\end{equation}
We would like to have $x(t)<0$ on $(z_1,z_1+\tau]$. This follows from $x(z_1+\tau)=x_d<0$ in combination with monotonicity provided we have $x(t_d)<0$, which was characterized by \eqref{xdneg}. So we ask under which conditions $a=a_d$ satisfies \eqref{xdneg}, or equivalently,
$$
\dfrac{(x_d-\underline{x})e^{\tau}(x_d+\beta_U)}{\beta_U}=a_d(1-e^{-\sigma})<-x_d,
$$
which means
\begin{equation}
(x_d-\underline{x})e^{\tau}(x_d+\beta_U)<-\beta_Ux_d
.
\label{ad}
\end{equation}
Recall that $\underline{x}$ depends on $\tau$ and on $\beta_U$; given $\tau$ and $\beta_U$ the preceding inequality holds provided we consider $x_d\in(\underline{x},0)$ close enough to $\underline{x}$.

\medskip

Assume from now on that $x_d$ is chosen so that \eqref{ad} holds. If we follow the solution $x$ which started from $\phi$ further then we see from Eq.~\eqref{e:pl-sys-dimen1} and because of $x(t)<0$ on $(z_1,z_1+\tau]$ that $x$ begins to increase after $t=z_1+\tau$, has a first zero $z_2=z_2(\phi)>z_1+\tau$, and coincides on $[z_2,z_2+\tau]$ with a translate of the periodic solution $\tilde{x}$ of Eq.~\eqref{e:pl-sys-dimen1} which has a zero $t=z_2$ and then increases to the value $\overline{x}$ at $t=z_2+\tau$.
Notice that if we take this segment of $\tilde{x}$ as the initial value $\phi$ for the function $x$ then $x_{z_2+\tau}=\phi=x_0$, and iteration of the whole procedure yields a periodic solution $x$. The inequalities $x(t)<0$ for $z_1<t\le z_1+\tau$ and  $y(z_1+\tau)=\underline{x}<x_d=x(z_1+\tau)$ in combination with Eq.~\eqref{e:pl-sys-dimen1} for $z_1+\tau\le t$ imply that after the time $t=z_1+\tau$ the function $x$ reaches the zero level from below before $y$ does so, hence the period $z_2(\phi)+\tau$ of $x$ is {\it shorter} than the minimal period $\tilde{T}$ of $\tilde{x}$.

\medskip

\begin{remark}

(1) Crucial in the concept of the therapy plan is that the time $t_d$ at which the number $y(t)$ of blood cells in case of no medication would fall to the critical level $x_d$ is large enough for medication in the future (at some $t_M>0$) to become effective (during the time interval $[t_M+\tau,t_M+\tau+\sigma]$)
{\it before} the time $t=t_d$. Necessary for this is
$$
0<t_d-\tau, \,\,\text{or}\,\,\tau<t_d\,\,;
$$
the stronger condition \eqref{tMpos} which we used in the exposition above can be relaxed.

(2) A practically useful version of the therapy plan would require an extension to more realistic model equations, probably with continuous production functions, in order to get reliable predictions of $t_d=t_d(\phi)$ for a large set of initial conditions, as they may arise from monitoring the number of blood cells in a patient.

\end{remark}

\section{Discussion}\label{sec:disc}

The computation of the response of the periodic solution of \eqref{e:pl-sys-dimen1} to a perturbation of the form defined in Section \ref{sec:pulse} is complicated as we have seen in Section \ref{sec:pert}, and the response of the perturbed cycle length can be quite varied as shown in Section \ref{sec:prop-reset}.  However,  our calculations have shown that in a variety of situations that are dependent on the timing of the pulse, the pulse has had no effect on subsequent minima of the model solution.  This is in sharp contrast to what is noted in clinical situations where G-CSF is employed and in which both an amelioration as well as a  worsening of neutropenia is clearly documented in response to the G-CSF, and the nature of the response is dependent on when G-CSF is given as well as the dosage.  Although it seems to be a curious anomaly that a cytokine like G-CSF, which inhibits apoptosis, {\it should actually make neutropenia worse}, in this section  we will show  that in case the nonlinearity (production function) $f=f_{\ast}$ in Eq. (\ref{e:gen-dde})
is piecewise constant with {\it three} values (see Figure \ref{f:figf-4}), say,
$$
f_{\ast}(\xi)=\beta_L>0\,\,\text{for}\,\,\xi<0,\,\,f_{\ast}(\xi)=-\beta_U<0\,\,\text{for}\,\,0\le\xi<\xi_{\ast}
$$
and
$$
f_{\ast}(\xi)=-\beta_{\ast}<-\beta_U\,\,\text{for}\,\,\xi_{\ast}\le\xi,
$$
then a pulse as in case $\mathbf{RPRP}$
can result in a subsequent minimum of the solution which is {\it lower} than the minimum of a periodic solution of the equation without a pulse. This will happen if the pulse pushes the state variable to high values where the negative feedback is so strong that after the delay time it drives the state down to very low values.

\medskip

\begin{figure}[htb]
\centering

\includegraphics{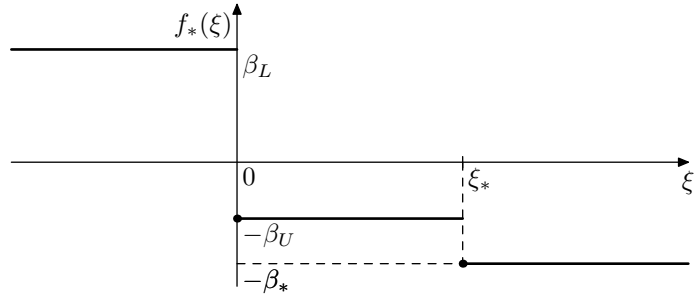}
\caption{The graph of the function $f_{\ast}$}
\label{f:figf-4}
\end{figure}

\medskip

In order to see that this actually happens for a suitable range of parameters, assume (in part for simplicity)
that $\xi_{\ast}=\overline{x}$, and define solutions of the equation
\begin{equation}
x'(t)=-x(t)+f_{\ast}(x(t-\tau)),
\label{e:6.32}
\end{equation}
i.e., of Eq. (\ref{e:gen-dde}) with $f=f_{\ast}$, as in Section \ref{sec:global}.
Then our former periodic solution $\tilde{x}$ of Eq. (\ref{e:pl-sys-dimen1})  will also be a solution of Eq. (\ref{e:6.32}).
Consider a  pulse which begins at $\tilde{z}_1$ and ends at $\tilde{z}_1+\tau$, that is, consider the function $x_{\ast}:\mathbb{R}\to\mathbb{R}$
which coincides with $\tilde{x}$ on $(-\infty,\tilde{z}_1]$, is given by
$$
x'(t)=-x(t)+\beta_L+a\,\,\text{for}\,\,\tilde{z}_1\le t\le\tilde{z}_1+\tau,
$$
and by Eq. (\ref{e:6.32}) for $t\ge\tilde{z}_1+\tau$.
We have
$$
x_{\ast}(\tilde{z}_1+\tau)=(\beta_L+a)(1-e^{-\tau})>\beta_L(1-e^{-\tau})=\tilde{x}(\tilde{z}_1+\tau)=\overline{x}=\xi_{\ast}.
$$
Incidentally, notice that $x_{\ast}(\tilde{z}_1+\tau)\to\beta_L+a$ as $\tau\to\infty$.

The first time $t_{\ast}>\tilde{z}_1$ at which $x_{\ast}$ crosses the level $\xi_{\ast}$ from below is given by
$$
\xi_{\ast}=(\beta_L+a)(1-e^{-(t_{\ast}-\tilde{z}_1)}),
$$
or equivalently,
$$
\beta_L(1-e^{-\tau})=(\beta_L+a)(1-e^{-(t_{\ast}-\tilde{z}_1)}),
$$
hence
$$
e^{\tilde{z}_1-t_{\ast}}=\dfrac{a+\beta_Le^{-\tau}}{a+\beta_L}.
$$
Notice here that
$$
e^{\tilde{z}_1-t_{\ast}}\to\dfrac{a}{a+\beta_L}\,\,\text{as}\,\,\tau\to\infty.
$$
Next,
\begin{align*}
x_{\ast}(t_{\ast}+\tau) & = -\beta_U+\bigl(x_{\ast}(\tilde{z}_1+\tau)+\beta_U\bigr)e^{-(t_{\ast}+\tau-(\tilde{z}_1+\tau))}\\
& =  -\beta_U+\bigl(x_{\ast}(\tilde{z}_1+\tau)+\beta_U\bigr)e^{\tilde{z}_1-t_{\ast}}.
\end{align*}
We observe that $x_{\ast}(t_{\ast}+\tau)$ has a limit as $\tau\to\infty$.
It follows that
\begin{align*}
x_{\ast}(\tilde{z}_1+\tau+\tau) & = -\beta_{\ast}+\bigl(x_{\ast}(t_{\ast}+\tau)+\beta_{\ast}\bigr)e^{-(\tilde{z}_1+2\tau-(t_{\ast}+\tau))}\\
 & =  -\beta_{\ast}+\bigl(x_{\ast}(t_{\ast}+\tau)+\beta_{\ast}\bigr)e^{t_{\ast}-\tilde{z}_1}e^{-\tau}
\end{align*}
converges to $-\beta_{\ast}<-\beta_U<\underline{x}=\min\,\tilde{x}(\mathbb{R})$ as $\tau\to\infty$. So, given
$\xi_{\ast}>0$ and $\beta_L>0>-\beta_U>-\beta_{\ast}$ and $a>0$ there exists $\tau_0>0$ so that for all $\tau\ge\tau_0$ the solution $x_{\ast}$ assumes values strictly less than $\underline{x}=\min\,\tilde{x}(\mathbb{R})$.

Our investigations in this paper have been confined to an examination of the response of the limit cycle solution of \eqref{e:pl-sys-dimen1} to a {\it single} perturbation.  However, in many situations of interest biologically (and certainly for the clinical questions that motivated this study) one is interested in the limiting behaviour of the limit cycle in response to periodic perturbations, c.f  \citet{winfree80,guevara1982,glass-winfree-1984,trine2004,bodnar2013b} for representative examples.  However, considerations of the response to the system we have studied to periodic perturbation is quite beyond the scope of this study as it would entail the development of completely different techniques than the ones that have proved so successful in the study of the response to single perturbations.  We are of the opinion that deriving the phase response curve in the face of periodically delivered pulses will only be possible, in general, for certain limiting cases of the pulse parameters, namely $\sigma \simeq 0$ and, possibly, small values of the amplitude $a$.  It is  possible that techniques such as those employed in \citet{bard2012} may be useful in this regard.

\section*{Acknowledgments}

MCM would like to thank the Universities of Bremen and Giessen and the Fields Institute, Toronto, for their hospitality during the time that some of this work was carried out. H-OW thanks McGill University for hosting his visit in September and October, 2014. This research was supported by the NSERC (Canada) and the Polish NCN grant no 2014/13/B/ST1/00224.  We are grateful to Prof. Bard Ermentrout (Pittsburg) for preliminary discussions, and to Dr. Daniel C\^{a}mara de Souza for his careful reading of the manuscript and pointing out some errors.

\appendix
\section{Proofs of the results from Sections \ref{sec:global} and \ref{sec:pulse}}

\begin{proof}[Proof of Proposition~\ref{prop:prop3.1}]

\begin{enumerate}
\item We begin with continuity of the {\it time-$\tau$-map}
$$
S(\tau,\cdot):Z\ni\phi\mapsto x^{\phi}_{\tau}\in Z.
$$
Observe that for $0\le t\le\tau$,
$$
x^{\phi}(t)=e^{-t}\phi(0)+\int_0^te^{-(t-s)}f(\phi(s-\tau))ds.
$$
For $\psi$ and $\phi$ in $Z$ and $0\le t\le\tau$ we have
$$
|x^{\psi}(t)-x^{\phi}(t)|\le|\phi(0)-\psi(0)|+\int_{-\tau}^0|f(\psi(s))-f(\phi(s))|ds
$$
where the integrand is nonzero only on the set
$$
N(\psi,\phi)=\{t\in[0,\tau]: \mathrm{sign}(\psi(t))\neq \mathrm{sign}(\phi(t))\}.
$$
It follows that
$$
|x^{\psi}_{\tau}-x^{\phi}_{\tau}|_C\le|\psi(0)-\phi(0)|+\beta\lambda(N(\psi,\phi)),
$$
with the Lebesgue measure $\lambda$ and a positive constant $\beta$. It is easy to see that
$$
\lim_{Z\ni\psi\to\phi\in Z}\lambda(N(\psi,\phi))=0.
$$
(Proof of this in case $\phi\in Z$ has zeros $z_1<z_2<\ldots<z_J$. Let $\epsilon>0$ be given. The complement of the set
$$
\bigcup_{j=1}^J\left(z_j-\dfrac{\epsilon}{2J},z_j+\dfrac{\epsilon}{2J}\right)
$$
in $[-\tau,0]$ is the finite union of compact intervals on each of which $\phi$ is either strictly positive, or strictly negative. There exists $\delta>0$ so that for every $\psi\in Z$ with $|\psi-\phi|<\delta$ the signs of $\psi(t)$ and $\phi(t)$ coincide on each of the compact intervals. This yields
$$
\lambda(N(\psi,\phi))\le\sum_{j=1}^J2\dfrac{\epsilon}{2J}=\epsilon.)
$$
Then it follows easily that
$$
\lim_{Z\ni\psi\to\phi\in Z}(S(\tau,\psi)-S(\tau,\phi))=\lim_{Z\ni\psi\to\phi\in Z}(x^{\psi}_{\tau}-x^{\phi}_{\tau})=0.
$$

\item Iterating we find that for every integer $n>0$ the time-$n\tau$-map $S(n\tau,\cdot)$ is continuous. Having this we obtain continuous dependence on initial data in the sense that for every $t\ge0$ and $\phi\in Z$,
$$
\lim_{Z\ni\psi\to\phi\in Z}\max_{-\tau\le s\le t}|x^{\psi}(s)-x^{\phi}(s)|=0.
$$

Finally, the continuity of $S$ at $(t,\phi)\in[0,\infty)\times Z$ follows by means of the estimate
\begin{align*}
|S(s,\psi)-S(t,\phi)| & \le |S(s,\psi)-S(s,\phi)|+|S(s,\phi)-S(t,\phi)|\\
& \le  \max_{-\tau\le v\le t+1}|x^{\psi}(v)-x^{\phi}(v)|+\max_{-\tau\le w\le0}|x^{\phi}(s+w)-x^{\phi}(t+w)|
\end{align*}
for $0\le s\le t+1$ and $\psi\in Z$ from continuous dependence on initial data as before  in combination with the uniform continuity of $x^{\phi}$ on $[-\tau,t+1]$.
\end{enumerate}
\end{proof}

\begin{proof}[Proof of Proposition \ref{prop:prop4.2}]
By definition the value of the cycle length map at $\Delta$  is $T(\Delta)=z-\tilde{z}_J=z-z_{\Delta,J}$ where $z$ is the smallest zero of $x^{(\Delta)}$ in $(\tilde{z}_J,\infty)$ such that $x^{(\Delta)}(z+t)=\tilde{x}(\tilde{z}_J+t)$ for all $t\ge0$. We have
$$
\text{sign}(\tilde{x}(\tilde{z}_J+t))=\text{sign}(x^{(\Delta)}(z_{\Delta,J}+t))=-\text{sign}(x^{(\Delta)}(z_{\Delta,J+1}+t))
$$
for $0<t\le\tau$ since $x^{(\Delta)}$ changes sign at each zero. We infer that $z>z_{\Delta,J+1}$. Notice that the definition of $J=j_{\Delta}$ implies $\Delta<z_{\Delta,J+1}$. Hence the next zero $z_{\Delta,J+2}$ satisfies $z_{\Delta,J+2}>z_{\Delta,J+1}+\tau>\Delta+\tau\ge\Delta+\sigma$. Therefore on $(z_{\Delta,J+2},\infty)$ the function $x^{(\Delta)}$  is given by Eq. (\ref{e:pl-sys-dimen1}), and satisfies
$$
\text{sign}(x^{(\Delta)}(z_{\Delta,J+2}+t))=-\text{sign}(x^{(\Delta)}(z_{\Delta,J+1}+t))=\text{sign}(\tilde{x}(\tilde{z}_J+t))
$$
for $0<t\le\tau$.
This yields $x^{(\Delta)}(z_{\Delta,J+2}+t)=\tilde{x}(\tilde{z}_J+t)$ for all $t\ge0$.
\end{proof}

\begin{proposition}
For $\Delta=\tilde{z}_J$, $J=1$ or $J=2$, we have $T(\Delta)=z_{\Delta,J+1}-z_{\Delta,J-1}$.
\label{prop:4.3}
\end{proposition}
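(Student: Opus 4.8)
The plan is to reduce the statement to Proposition~\ref{prop:prop4.2} together with one extra observation that is special to the onset $\Delta=\tilde z_J$: that the perturbed solution returns to a translate of the periodic orbit already at its $(J+1)$-st zero. First I would record that for $\Delta=\tilde z_J$ the index occurring in Proposition~\ref{prop:prop4.2} is $J=j_\Delta$, since $\tilde z_J\le\tilde z_J<\tilde z_{J+1}$; hence $z_{\Delta,j}=\tilde z_j$ for all $j\le J$, and in particular $z_{\Delta,J}=\tilde z_J=\Delta$ as well as $z_{\Delta,J-1}=\tilde z_{J-1}$ (with the convention $\tilde z_0=-\tau$ when $J=1$). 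Proposition~\ref{prop:prop4.2} then gives $T(\Delta)=z_{\Delta,J+2}-z_{\Delta,J}=z_{\Delta,J+2}-\tilde z_J$, so it suffices to prove $z_{\Delta,J+2}-z_{\Delta,J+1}=\tilde z_J-\tilde z_{J-1}$.

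The heart of the argument is the claim that
$$
x^{(\Delta)}(z_{\Delta,J+1}+t)=\tilde x(\tilde z_{J+1}+t)\qquad\text{for all }t\ge0.
$$
To obtain it I would first locate $z_{\Delta,J+1}$ relative to the pulse: since the zeros of $x^{(\Delta)}$ are transversal and spaced at distances larger than $\tau$, and $z_{\Delta,J}=\Delta$, we get $z_{\Delta,J+1}>\Delta+\tau\ge\Delta+\sigma$, so the pulse is already over on $[z_{\Delta,J+1},z_{\Delta,J+1}+\tau]$ and there $x^{(\Delta)}$ satisfies the unperturbed equation~\eqref{e:pl-sys-dimen1}. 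Next, because $x^{(\Delta)}$ changes sign transversally at each zero and $z_{\Delta,J+1}$ is the first zero after $z_{\Delta,J}=\tilde z_J$, the sign of $x^{(\Delta)}$ on the whole delay window $[z_{\Delta,J+1}-\tau,z_{\Delta,J+1})$ (which lies strictly between $z_{\Delta,J}$ and $z_{\Delta,J+1}$ by the spacing bound) agrees with the sign of $\tilde x$ on $[\tilde z_{J+1}-\tau,\tilde z_{J+1})$. Consequently $x^{(\Delta)}$ and the translate $t\mapsto\tilde x\bigl(\tilde z_{J+1}+(t-z_{\Delta,J+1})\bigr)$ satisfy one and the same autonomous linear equation on $[z_{\Delta,J+1},z_{\Delta,J+1}+\tau]$ with common initial value $0$ at $z_{\Delta,J+1}$, and hence coincide there. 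Thus the segment $x^{(\Delta)}_{z_{\Delta,J+1}+\tau}$ equals $\tilde x_{\tilde z_{J+1}+\tau}$, which lies on the periodic orbit $\tilde O$; since $\tilde O$ is invariant under the semiflow $S$ (it is the orbit of $\tilde x$, compare Theorem~\ref{t:3.3}), the equality propagates to all $t\ge0$, proving the claim.

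Granting the claim, the first zero of $x^{(\Delta)}$ after $z_{\Delta,J+1}$ matches the first zero of $\tilde x$ after $\tilde z_{J+1}$, i.e.\ $z_{\Delta,J+2}-z_{\Delta,J+1}=\tilde z_{J+2}-\tilde z_{J+1}$, and by the $\tilde T$-periodicity of the zeros ($\tilde z_{k+2}=\tilde z_k+\tilde T$) the right-hand side equals $\tilde z_J-\tilde z_{J-1}$. Substituting into $T(\Delta)=z_{\Delta,J+2}-\tilde z_J$ gives $T(\Delta)=z_{\Delta,J+1}+(\tilde z_J-\tilde z_{J-1})-\tilde z_J=z_{\Delta,J+1}-\tilde z_{J-1}=z_{\Delta,J+1}-z_{\Delta,J-1}$, as desired.

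The step I expect to be the main obstacle is the early-return claim, and in particular making explicit that it genuinely uses the hypothesis $\Delta=\tilde z_J$: it is exactly the identity $z_{\Delta,J}=\Delta$ that forces $z_{\Delta,J+1}>\Delta+\sigma$ and so places the first post-pulse zero before any further perturbation can intervene. For a generic onset $\Delta>\tilde z_J$ this can fail (for instance in case $\mathbf{RPFN}$ the relevant crossing occurs during the pulse), which is precisely why the clean formula of the proposition is confined to $\Delta=\tilde z_J$; I would keep this distinction explicit in order to avoid over-claiming.
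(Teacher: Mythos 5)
Your proof is correct and follows essentially the same route as the paper's: both use the zero-spacing bound $z_{\Delta,J+1}>\Delta+\tau\ge\Delta+\sigma$ to place the first post-pulse zero after the pulse, then use transversality and the sign pattern on the delay window to show the solution merges into a translate of $\tilde{x}$ at $z_{\Delta,J+1}$, and finally combine with Proposition~\ref{prop:prop4.2}. The only (immaterial) difference is that you identify the solution after $z_{\Delta,J+1}$ with $\tilde{x}(\tilde{z}_{J+1}+\cdot)$ while the paper uses $\tilde{x}(\tilde{z}_{J-1}+\cdot)$; these coincide by $\tilde{T}$-periodicity.
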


\begin{proof}
From $\Delta=\tilde{z}_J$ we obtain $\Delta+\sigma\le\Delta+\tau<z_{\Delta,J+1}$. This implies that for $t\ge z_{\Delta,J+1}$ the function $x^{(\Delta)}$ satisfies Eq. (\ref{e:pl-sys-dimen1}). Using this and the fact that $x^{(\Delta)}$ and $\tilde{x}$
change sign at $\tilde{z}_{J-1}=z_{\Delta,J-1}$ and at $\tilde{z}_J=z_{\Delta,J}$ respectively  we infer that for all $t\ge z_{\Delta,J+1}$ we have $x^{(\Delta)}(z_{\Delta,J+1}+t)=\tilde{x}(\tilde{z}_{J-1}+t)$. It follows that
$$
z_{\Delta,J+2}=z_{\Delta,J+1}+(\tilde{z}_J-\tilde{z}_{J-1}).
$$
Combining this with Proposition~\ref{prop:prop4.2} we find
$$
T(\Delta)=z_{\Delta,J+2}-\tilde{z}_J=z_{\Delta,J+1}-\tilde{z}_{J-1}=z_{\Delta,J+1}-z_{\Delta,J-1}.
$$
\end{proof}

\begin{proof}[Proof of Corollary \ref{cor:cor4.2}]
Let $\Delta_0\in[0,\tilde{T})$ be given and set $J=j(\Delta_0)$. Then
$\Delta_0<z_{\Delta_0,J+1}$. Corollary \ref{cor:contin} yields a neighbourhood $N$ of $\Delta_0$ in $[0,\tilde{T})$ such that for all $\Delta\in N$ we have $\Delta<z_{\Delta,J+1}$.

\begin{enumerate}
\item The case $\tilde{z}_J<\Delta_0$. Then by Corollary \ref{cor:contin}, $\tilde{z}_J<\Delta<z_{\Delta,J+1}$ for all $\Delta$ in a neighbourhood $V\subset N$ of $\Delta_0$ in $[0,\tilde{T})$. For $\Delta\in V$ we get $j(\Delta)=J$, hence $T(\Delta)=z_{\Delta,J+2}-\tilde{z}_J$, and Corollary \ref{cor:contin} yields continuity at $\Delta_0$.

\item The case $\tilde{z}_J=\Delta_0$. There is a neighbourhood $U\subset N$ of $\Delta_0$ in $[0,\tilde{T})$ with
$\tilde{z}_{J-1}<\Delta$ for all $\Delta\in U$. For all $\Delta\in U$ with $\Delta<\tilde{z}_J$  this yields $j(\Delta)=J-1$ and $T(\Delta)=z_{\Delta,J-1+2}-\tilde{z}_{J-1}$ . At $\Delta=\Delta_0$
we have
\begin{align*}
T(\Delta)(\Delta_0) & =  z_{\Delta_0,J+2}-\tilde{z}_J\\
& =  z_{\Delta_0,J+1}-z_{\Delta_0,J-1}\quad\text{(see Proposition \ref{prop:4.3})}\\
& =  z_{\Delta_0,J+1}-\tilde{z}_{J-1}.
\end{align*}
The continuity of the map $\Delta\mapsto z_{\Delta,J+1}$ due to Corollary \ref{cor:contin} now shows that the restriction of the cycle length map to the set $[0,\tilde{z}_J]\cap U$ is continuous.
For $\tilde{z}_J\le\Delta\in U\subset N$  we have $\tilde{z}_J\le\Delta<z_{\Delta,J+1}$, hence $j(\Delta)=J$, and thereby
$T(\Delta)=z_{\Delta,J+2}-\tilde{z}_J$. The continuity of the map $\Delta\mapsto z_{\Delta,J+2}$ due to Corollary \ref{cor:contin} shows that the restriction of the cycle length map to the set $U\cap[\tilde{z}_J,\tilde{T})$ is continuous. As both restrictions coincide at $\tilde{z}_J=\Delta_0$ we obtain continuity of the cycle length map at $\Delta_0$.
\end{enumerate}
\end{proof}

\begin{proof}[Proof of Proposition~\ref{pr:prop4.4}]
\begin{enumerate}
\item Let $\Delta_0\in[0,\tilde{T})$ be given. Set $J=j(\Delta)$. Then $\tilde{z}_J\le\Delta_0<z_{\Delta_0,J+1}$. Using
Corollary \ref{cor:contin} we find a neighbourhood $N$ of $\Delta_0$ in $[0,\tilde{T})$ such that for every $\Delta\in N$ we have
$$
\tilde{z}_{J-1}<\Delta<z_{\Delta,J+1}.
$$
In the following we show continuity of the map $[0,\tilde{T})\ni\Delta\mapsto\overline{x}_{\Delta}\in\mathbb{R}$. The proof for the other map is analogous.

\item  For $\Delta\in N\cap[\tilde{z}_J,\infty)$ we have $J=j(\Delta)$, hence
$$
\overline{x}_{\Delta}=\max_{\tilde{z}_J\le t\le z_{\Delta,J+2}}x^{(\Delta)}(t).
$$
Using this, the uniform continuity of the map
$$
[0,\tilde{T})\times[0,\infty)\ni(\Delta,t)\mapsto x^{(\Delta)}(t)\in\mathbb{R},
$$
on compact sets, and the continuity of the map $\Delta\mapsto z_{\Delta,J+2}$ (see Corollary \ref{cor:contin}) one can easily show that the map
$$
N\cap[\tilde{z}_J,\infty)\ni\Delta\mapsto\overline{x}_{\Delta}\in\mathbb{R}
$$
is continuous.

Similarly we have for $\Delta\in N\cap(-\infty,\tilde{z}_J)$ that $J-1=j(\Delta)$, hence
$$
\overline{x}_{\Delta}=\max_{\tilde{z}_{J-1}\le t\le z_{\Delta,J+1}}x^{(\Delta)}(t).
$$
As before one can then easily show that the map
$$
N\cap(-\infty,\tilde{z}_J)\ni\Delta\mapsto\overline{x}_{\Delta}\in\mathbb{R}
$$
is continuous.

\item It remains to prove that in case $\Delta_0=\tilde{z}_J$ (where  $N\cap(-\infty,\tilde{z}_J)\neq\emptyset$) we have $\overline{x}_{\Delta}\to\overline{x}_{\Delta_0}$ as $\Delta\nearrow\Delta_0$.
\begin{enumerate}
\item The case $\Delta_0=\tilde{z}_J$ and $\tilde{x}'(\tilde{z}_J)<0$. Using the fact that $x^{(\Delta_0)}$ changes sign at each zero we obtain $x^{(\Delta_0)}(t)\le0$ on $[\tilde{z}_j,z_{\Delta_0,J+1}]=[\Delta_0,z_{\Delta_0,J+1}]$, and
\begin{align*}
\overline{x}_{\Delta_0} & =  \max_{\tilde{z}_J\le t\le z_{\Delta_0,J+2}}x^{(\Delta_0)}(t)\\
& =  \max_{z_{\Delta_0,J+1}\le t\le z_{\Delta_0,J+2}}x^{(\Delta_0)}(t)\\
& =  \overline{x}
\end{align*}
where the last equation holds because $\Delta_0=\tilde{z}_J$  implies $\Delta_0+\sigma<\Delta_0+\tau<z_{\Delta_0,J+1}$ and thereby $x^{(\Delta_0)}(z_{\Delta_0,J+1}+t)=\tilde{x}(\tilde{z}_{J-1}+t)$ for all $t\ge0$.

Using continuity as in Part 2 above we find a neighbourhood $U\subset N$ of $\Delta_0$ in $[0,\tilde{T})$ such that for each $\Delta\in U$ we have
$$
x^{(\Delta)}(t)<\dfrac{1}{2}\overline{x}\quad\text{on}\quad[\Delta,z_{\Delta,J+1}]
$$
and $\tilde{z}_{J-1}+\tau<\Delta$. For $\Delta\in U$ with $\Delta<\tilde{z}_J$, we have $J-1=j(\Delta)$, and the preceding inequality yields
$$
x^{(\Delta)}(\tilde{z}_{J-1}+\tau)=\tilde{x}(\tilde{z}_{J-1}+\tau)=\overline{x}\quad\left(>\dfrac{1}{2}\overline{x}\right).
$$
It follows that
\begin{align*}
\overline{x}_{\Delta} & = \max_{\tilde{z}_{J-1}\le t\le z_{\Delta,J+1}}x^{(\Delta)}(t)\\
& = \max_{\tilde{z}_{J-1}\le t\le\Delta}x^{(\Delta)}(t)\\
& = \overline{x},
\end{align*}
so the map $U\cap(-\infty,\tilde{z}_J)\ni\Delta\to\overline{x}_{\Delta}\in\mathbb{R}$ is constant with value
$\overline{x}=\overline{x}_{\Delta_0}$.

\item The case $\Delta_0=\tilde{z}_J$ and $\tilde{x}'(\tilde{z}_J)>0$. Then $x^{(\Delta_0)}$ is negative on $(\tilde{z}_{J-1},\tilde{z}_J)$, positive on $(\tilde{z}_J,z_{\Delta_0,J+1})$ and negative on $(z_{\Delta_0,J+1},z_{\Delta_0,J+2})$, and
$$
\overline{x}_{\Delta_0}=\max_{\tilde{z}_J\le t\le z_{\Delta_0,J+2}}x^{(\Delta)}(t)=\max_{\tilde{z}_J\le t\le z_{\Delta_0,J+1}}x^{(\Delta)}(t)>0.
$$
Choose $t_0\in(\tilde{z}_J,z_{\Delta_0,J+1})$ with
$$
x^{(\Delta)}(t_0)=\overline{x}_{\Delta_0}>0.
$$
By continuity there exists a neighbourhood $V\subset N$ of $\Delta_0$ in $[0,\tilde{T})$ such that for every $\Delta\in V$ we have
\begin{align*}
x^{(\Delta)}(t) & < \dfrac{1}{2}\overline{x}_{\Delta_0}\quad\text{on}\quad[\tilde{z}_{J-1},z_{\Delta,J}],\\
z_{\Delta,J} & < t_0<z_{\Delta,J+1},\\
\dfrac{1}{2}\overline{x}_{\Delta_0} & <  x^{(\Delta)}(t_0).
\end{align*}
For $\Delta\in V$ with $\Delta<\tilde{z}_J$ we have $j(\Delta)=J-1$, and we conclude that
\begin{align*}
\overline{x}_{\Delta} & =  \max_{\tilde{z}_{J-1}\le t\le z_{\Delta,J+1}}x^{(\Delta)}(t)\\
& =  \max_{z_{\Delta,J}\le t\le z_{\Delta,J+1}}x^{(\Delta)}(t)
\end{align*}
where by continuity the last term converges to
$$
\max_{z_{\Delta_0,J}\le t\le z_{\Delta_0,J+1}}x^{(\Delta_0)}(t)=\overline{x}_{\Delta_0}
$$
as $V\ni\Delta\nearrow\Delta_0$.
\end{enumerate}
\end{enumerate}
\end{proof}

\section{Proofs of the results from Section~\ref{sec:pert}}

\begin{proof}[Proof of Proposition \ref{c:prop1}]
First we  show that  $x^{(\Delta)}$ has a first positive zero $z_{\Delta,1}<\tilde{z}_1$.
We have $x^{(\Delta)}(t)<0$ for  $t \in (-\tau,\Delta+\sigma)$. For $t \in [\Delta+\sigma,\infty)$, $x^{(\Delta)}(t)$ is given by \eqref{e:after} as long as $x^{(\Delta)}(t-\tau)<0$.  Compute
$$
z_{\Delta,1} = \Delta+\sigma+\ln\dfrac{\beta_L-x^{(\Delta)}(\Delta+\sigma)}{\beta_L} \ge\Delta+\sigma
$$
from the condition $x^{(\Delta)}(z_{\Delta,1})=0$. Similarly since $\tilde{x}(\tilde{z}_1) = 0$ we obtain
$$
\tilde{z}_1=\Delta+\sigma+\ln\dfrac{\beta_L-\tilde{x}(\Delta+\sigma)}{\beta_L}.
$$
Since $\tilde{x}(\Delta+\sigma)<x^{(\Delta)}(\Delta+\sigma)$ we have $z_{\Delta,1}<\tilde{z}_1$.

The largest zero of $\tilde{x}$ on $(-\infty,\Delta]$ is $\tilde{z}_0=-\tau$. Hence the minimal value of $x^{(\Delta)}$ on $[-\tau,z_{\Delta,1}]$ is equal to $\underline{x}$.
On the interval $[z_{\Delta,1},z_{\Delta,1}+\tau]$, $x^{(\Delta)}(t)$ is given by \eqref{e:after}. This yields $x^{(\Delta)}(t+z_{\Delta,1})=\tilde{x}(t+\tilde{z}_1)$ for all $t\ge 0$.
It follows that $\overline{x}_{\Delta}=\overline{x}$ and $\underline{x}_{\Delta}=\underline{x}$ and
\begin{align*}
T(\Delta)  & =  \bigl(z_{\Delta,1}+(\tilde{z}_2-\tilde{z}_1)\bigr)-\tilde{z}_{0}=z_{\Delta,1}-\tilde{z}_1+\tilde{T}\\
& =  \tilde{T}+\ln\dfrac{\beta_L-x^{(\Delta)}(\Delta+\sigma)}{\beta_L}-\ln\dfrac{\beta_L-\tilde{x}(\Delta+\sigma)}{\beta_L}\\
& =  \tilde{T}+\ln\dfrac{\beta_L-x^{(\Delta)}(\Delta+\sigma)}{\beta_L-\tilde{x}(\Delta+\sigma)}.
\end{align*}
The formula \eqref{e:risingxDeltasigma} yields
$$
x^{(\Delta)}(\Delta + \sigma)-\beta_L=  -\beta_L  e^{\tilde{z}_1-\Delta-\sigma}+a(1-e^{-\sigma})
$$
and with
$$
\tilde{x}(\Delta+\sigma)-\beta_L=(\underline{x}-\beta_L)e^{-(\Delta+\sigma)}=-\beta_Le^{\tilde{z}_1-(\Delta+\sigma)}
$$
we find
\begin{align*}
T(\Delta) & =  \tilde{T}+\ln\dfrac{ \beta_L  e^{\tilde{z}_1-\Delta-\sigma}-a(1-e^{-\sigma})}{\beta_Le^{\tilde{z}_1-(\Delta+\sigma)}}\\
& =  \tilde{T}+\ln\left(1-\dfrac{a(1-e^{-\sigma})}{\beta_L}e^{\Delta+\sigma-\tilde{z}_1}\right).
\end{align*}
Thus, $T(\Delta) < \tilde{T}$, and the restriction of $T(\Delta)$  to $I_{RNRN}$ is strictly decreasing.
\end{proof}

From \eqref{e:after} it follows that $x^{(\Delta)}$ is strictly decreasing right after $t=\Delta+\sigma$ if and only if $x^{(\Delta)}(\Delta+\sigma)-\beta_L>0$. We have,   by \eqref{e:risingxDeltasigma},
\begin{equation*}
x^{(\Delta)}(\Delta+\sigma)-\beta_L=-\beta_Le^{\tilde{z}_1-\Delta-\sigma}+a(1-e^{-\sigma}).
\end{equation*}
Hence, $x^{(\Delta)}(\Delta+\sigma)-\beta_L>0$ if and only if
\[
e^{\Delta}>\dfrac{\beta_Le^{\tilde{z}_1-\sigma}}{a(1-e^{-\sigma})}.
\]
Let us define
\begin{equation}\label{d:hatdelta1}
\hat{\delta}_1=\ln \dfrac{\beta_Le^{\tilde{z}_1-\sigma}}{a(1-e^{-\sigma})}.
\end{equation}
We have
\[
\hat{\delta}_1=\tilde{z}_1-\sigma+\ln \dfrac{\beta_L}{a(1-e^{-\sigma})}>\delta_1.
\]
Thus
we divide the case \textbf{RNRP} when $x^{(\Delta)}(\Delta+\sigma)\ge 0$  into the two subcases
$0\le x^{(\Delta)}(\Delta+\sigma)\le \beta_L$ and $x^{(\Delta)}(\Delta+\sigma)>\beta_L$ and we consider these two subcases separately as  $\mathbf{RNRP1}$ and $\mathbf{RNRP2}$, see Figures~\ref{fig:bl-a2} and~\ref{fig:bl-a2b}, respectively. The $\Delta$-intervals are of the form
\[
I_{RNRP1}=[\max\{0,\delta_1\},\tilde{z}_1)\cap[\delta_1,\hat{\delta}_1]\quad \text{and}\quad I_{RNRP2}=[\max\{0,\delta_1\},\tilde{z}_1)\cap(\hat{\delta}_1,\infty).
\]

\begin{description}
\item[Case $\mathbf{RNRP1}$] The pulse parameters $(a,\Delta,\sigma)$ are such that $\Delta<\tilde{z}_1$ and $x^{(\Delta)}(\Delta+\sigma)\in [0,\beta_L]$. Then $x^{(\Delta)}$  is increasing right after the pulse.
\item[Case $\mathbf{RNRP2}$] The pulse parameters $(a,\Delta,\sigma)$ are such that $\Delta<\tilde{z}_1$ and $x^{(\Delta)}(\Delta+\sigma)>\beta_L$. Then $x^{(\Delta)}$ is strictly decreasing right after the pulse.
\end{description}

\begin{proof}[Proof of Proposition~\ref{c:prop2}]
The equation
$$
0=x^{(\Delta)}(z_{\Delta,1})=\beta_L+a+\bigl(x^{(\Delta)}(\Delta)-(\beta_L+a)\bigr)e^{-(z_{\Delta,1}-\Delta)}
$$
together with \eqref{e:risingxDelta} yields
\begin{align}\nonumber
(\beta_L+a)e^{z_{\Delta,1}}&=\bigl(\beta_L+a-x^{(\Delta)}(\Delta)\bigr)e^{\Delta}\\
&=\beta_Le^{\tilde{z}_1}+ae^{\Delta}.\label{d:rez1p}
\end{align}
Use $\Delta<z_{\Delta,1}$ and $a>0,\beta_L>0$ to obtain
\[
e^{z_{\Delta,1}}-e^{\tilde{z}_1}=\dfrac{a}{\beta_L}(e^{\Delta}-e^{z_{\Delta,1}})<0
\]
and to conclude that $z_{\Delta,1}<\tilde{z}_1$.

We have
\begin{equation*}
x^{(\Delta)}(z_{\Delta,1}+\tau)=\beta_L+(x^{(\Delta)}(\Delta+\sigma)-\beta_L)e^{-\left(z_{\Delta,1}+\tau-
(\Delta+\sigma)\right)},
\end{equation*}
which by \eqref{e:risingxDeltasigma} can be rewritten as
\begin{align*}
x^{(\Delta)}(z_{\Delta,1}+\tau)&=\beta_L+\bigl(-\beta_Le^{\tilde{z}_1-\Delta-\sigma}+a(1-e^{-\sigma})\bigr)
e^{-\left(z_{\Delta,1}+\tau-
(\Delta+\sigma)\right)}\\
&=\beta_L-(\beta_Le^{\tilde{z}_1}+ae^{\Delta})e^{-(z_{\Delta,1}+\tau)}+ae^{-\left(z_{\Delta,1}+\tau-
(\Delta+\sigma)\right)}.
\end{align*}
From  \eqref{d:rez1p}  it follows that
\begin{equation}\label{e:xtmax}
x^{(\Delta)}(z_{\Delta,1}+\tau)=\beta_L-(\beta_L+a)e^{-\tau}+ae^{-\tau}e^{\sigma+\Delta-z_{\Delta,1}}
\end{equation}
and that the function $[\max\{0,\delta_1\},\tilde{z}_1)\ni\Delta\mapsto e^{z_{\Delta,1}-\Delta}\in\mathbb{R}$ is strictly decreasing, which shows that $x^{(\Delta)}(z_{\Delta,1}+\tau)$ is strictly increasing with respect to $\Delta\in[\max\{0,\delta_1\},\tilde{z}_1) $.  Since $\Delta+\sigma\ge z_{\Delta,1}$ and $ae^{-\tau}>0$, we get
\[
x^{(\Delta)}(z_{\Delta,1}+\tau)\ge \beta_L-\beta_Le^{-\tau}=\overline{x}.
\]
Also the function $[\max\{0,\delta_1\},\tilde{z}_1)\ni\Delta\mapsto x^{(\Delta)}(\Delta+\sigma)\in\mathbb{R}$ is increasing.

In subcase $\mathbf{RNRP1}$ the function $x^{(\Delta)}$ is increasing on $[\Delta+\sigma,z_{\Delta,1}+\tau]$ while in subcase $\mathbf{RNRP2}$ it is decreasing on that interval. It follows that in subcase $\mathbf{RNRP1}$,
$$
\max_{z_{\Delta,1}\le t\le z_{\Delta,1}+\tau}x^{(\Delta)}(t)=x^{(\Delta)}( z_{\Delta,1}+\tau)\ge\overline{x}
$$
while in subcase $\mathbf{RNRP2}$,
$$
\max_{z_{\Delta,1}\le t\le z_{\Delta,1}+\tau}x^{(\Delta)}(t)=x^{(\Delta)}( \Delta+\sigma)\ge x^{(\Delta)}(z_{\Delta,1}+\tau)\ge\overline{x}.
$$
In both subcases $\max_{z_{\Delta,1}\le t\le z_{\Delta,1}+\tau}x^{(\Delta)}(t)$ is increasing with respect to $\Delta$.

Also in both subcases we have $0<x^{(\Delta)}(t)$ for $z_{\Delta,1}<t\le z_{\Delta,1}+\tau$. It follows that after $t=z_{\Delta,1}+\tau\ge\Delta+\sigma$ the function $x^{(\Delta)}$ is given by
$$
x^{(\Delta)}(t)=-\beta_U+\bigl(x^{(\Delta)}(z_{\Delta,1}+\tau)+\beta_U\bigr)e^{-\left(t-(z_{\Delta,1}+\tau)\right)}
$$
as long as $x^{(\Delta)}(t-\tau)>0$. We obtain a first zero $z_{\Delta,2}$ of $x^{(\Delta)}$ in $(z_{\Delta,1}+\tau,\infty)$, and
for all $t\ge0$, $x^{(\Delta)}(z_{\Delta,2}+t)=\tilde{x}(\tilde{z}_0+t)$  (recall $\tilde{z}_0=-\tau$). Then
$$
T(\Delta)=z_{\Delta,2}-\tilde{z}_0=z_{\Delta,2}+\tau.
$$
Moreover,
\begin{align*}
\overline{x}_{\Delta} & =  \max_{-\tau\le t\le z_{\Delta,2}}x^{(\Delta)}(t)=\max_{z_{\Delta,1}\le t\le z_{\Delta,1}+\tau}x^{(\Delta)}(t)\\
& =  \max\{x^{(\Delta)}(\Delta+\sigma),x^{(\Delta)}(z_{\Delta,1}+\tau)\}\\
& \ge  \overline{x}
\end{align*}
is increasing with respect to $\Delta$ in both subcases.

Now the equation
$$
0=  x^{(\Delta)}(z_{\Delta,2}) =  -\beta_U+\bigl(x^{(\Delta)}(z_{\Delta,1}+\tau)+\beta_U\bigr)e^{z_{\Delta,1}+\tau-z_{\Delta,2}}
$$
yields
$$
\beta_U e^{z_{\Delta,2}}=\bigl(x^{(\Delta)}(z_{\Delta,1}+\tau)+\beta_U\bigr)e^{z_{\Delta,1}+\tau}.
$$
Also, from \eqref{d:rez1p} and \eqref{e:xtmax} we obtain
\[
\beta_U e^{z_{\Delta,2}}=(\beta_L+\beta_U)e^{z_{\Delta,1}+\tau}-\beta_L e^{\tilde{z}_1}+a(1-e^{-\sigma}) e^{\Delta+\sigma}.
\]
Since
\[
\beta_U+\beta_L-\beta_{L}e^{-\tau}=\beta_U+\overline{x}=\beta_Ue^{\tilde{z}_2-t_{\max}}
\]
and $\tilde{z}_2-t_{\max}+\tau=\tilde{z}_2-\tilde{z}_1$, we arrive at
\begin{equation}\label{e:betaszeros}
\beta_L e^{\tilde{z}_1}+\beta_U e^{\tilde{z}_2}=(\beta_L+\beta_U)e^{\tau+\tilde{z}_1}.
\end{equation}
Thus
\[
\begin{split}
\beta_U e^{z_{\Delta,2}}&=\beta_Ue^{\tilde{z}_2}+(\beta_L+\beta_U)e^{\tau}(e^{z_{\Delta,1}}-e^{\tilde{z}_1})+a(e^{\sigma}-1) e^{\Delta} \\
&=\beta_Ue^{\tilde{z}_2}+\dfrac{a(\beta_L+\beta_U)e^{\tau}}{\beta_L+a}(e^{\Delta}-e^{\tilde{z}_1})+a(e^{\sigma}-1) e^{\Delta}\quad\text{(with \eqref{d:rez1p})},
\end{split}
\]
which implies the formula for $T(\Delta)=z_{\Delta,2}+\tau$, since  $\tilde{z}_2+\tau=\tilde{T}$.
\end{proof}

In the case $\mathbf{RPRP}$ we always have $x^{(\Delta)}(\Delta+\sigma)>0$, but in the following proof we need to distinguish between two cases
\begin{description}
\item[Case $\mathbf{RPRP1}$] $0< x^{(\Delta)}(\Delta+\sigma)\le \beta_L$, and
\item[Case $\mathbf{RPRP2}$] $x^{(\Delta)}(\Delta+\sigma)> \beta_L$.
\end{description}

\begin{proof}[Proof of Proposition \ref{c:prop3}]
Using \eqref{e:after} for $\Delta+\sigma\le t\le z_{\Delta,1}+\tau=\tilde{z}_1+\tau=t_{\max}$ and \eqref{e:risingxDeltasigma} we obtain
\begin{align*}
 x^{(\Delta)}(z_{\Delta,1}+\tau) & = \beta_L+\bigl(x^{(\Delta)}(\Delta+\sigma)-\beta_L\bigr)e^{-\left(\tilde{z}_1+\tau-(\Delta+\sigma)\right)}\\
& =  \beta_L-\beta_Le^{-\tau} +a(1-e^{-\sigma})e^{-\left(\tilde{z}_1+\tau-(\Delta+\sigma)\right)}\\
& =   \overline{x} +a(e^{\sigma}-1)e^{-(\tilde{z}_1+\tau)+\Delta}>\overline{x},
\end{align*}
and $ x^{(\Delta)}(z_{\Delta,1}+\tau)$ is increasing as a function of $\Delta\in I_{RPRP}$. Also $x^{(\Delta)}(\Delta+\sigma)$ is increasing as a function of $\Delta\in I_{RPRP}$. In case $\mathbf{RPRP1}$ the function $x^{(\Delta)}$ is increasing on $[\tilde{z}_1,\tilde{z}_1+\tau]$ with $x^{(\Delta)}(t)\le\beta_L$ on this interval, hence $\max_{\tilde{z}_1\le t\le \tilde{z}_1+\tau}x^{(\Delta)}(t)=x^{(\Delta)}(\tilde{z}_1+\tau)\le\beta_L$. In case $\mathbf{RPRP2}$ the function $x^{(\Delta)}$ is increasing on $[\tilde{z}_1,\Delta+\sigma]$ and decreasing on $[\Delta+\sigma,\tilde{z}_1+\tau]$, with $x^{(\Delta)}(t)>\beta_L$ on this interval, hence $\max_{\tilde{z}_1\le t\le \tilde{z}_1+\tau}x^{(\Delta)}(t)=x^{(\Delta)}(\Delta+\sigma)>\beta_L$.
In both subcases,
$$
\max_{\tilde{z}_1\le t\le \tilde{z}_1+\tau}x^{(\Delta)}(t)=\max\{x^{(\Delta)}(\Delta+\sigma),x^{(\Delta)}(\tilde{z}_1+\tau)\}\ge x^{(\Delta)}(\tilde{z}_1+\tau)>\overline{x},
$$
and $\max_{\tilde{z}_1\le t\le \tilde{z}_1+\tau}x^{(\Delta)}(t)$ is increasing as a function of $\Delta\in I_{RPRP}$.

As $x^{(\Delta)}(t)>0$ on $(\tilde{z}_1,\tilde{z}_1+\tau]$ we have
$$
x^{(\Delta)}(t)=-\beta_U+\bigl(x^{(\Delta)}(\tilde{z}_1+\tau)+\beta_U\bigr)e^{-\left(t-(\tilde{z}_1+\tau)\right)}
$$
for $t\ge\tilde{z}_1+\tau$ as long as $x^{(\Delta)}(t-\tau)>0$. It follows that there is a smallest zero $z_{\Delta,2}$ of $x^{(\Delta)}$ in $(\tilde{z}_1+\tau,\infty)$, and
$$
x^{(\Delta)}(z_{\Delta,2}+t)=\tilde{x}(\tilde{z}_0+t)\quad\text{for all}\quad t\ge0.
$$
This yields
$$
T(\Delta)=\bigl(z_{\Delta,2}+(\tilde{z}_1-\tilde{z}_0)\bigr)-\tilde{z}_1=z_{\Delta,2}+\tau.
$$
Moreover,
\begin{align*}
\overline{x}_{\Delta} & = \max_{\tilde{z}_1\le t\le z_{\Delta,2}+(\tilde{z}_1-\tilde{z}_0)}x^{(\Delta)}(t)\\
& = \max_{\tilde{z}_1\le t\le\tilde{z}_1+\tau}x^{(\Delta)}(t)\\
& =  \max\{x^{(\Delta)}(\Delta+\sigma),x^{(\Delta)}(\tilde{z}_1+\tau)\}\ge\overline{x}
\end{align*}
is increasing as a function of $\Delta\in I_{RPRP}$, and $\underline{x}_{\Delta}=\underline{x}$.

Recall $t_{\max}=\tilde{z}_1+\tau$. From
$$
0=x^{(\Delta)}(z_{\Delta,2})=-\beta_U+\bigl(x^{(\Delta)}(t_{\max})+\beta_U\bigr)e^{-(z_{\Delta,2}-t_{\max})}
$$
we get
\[
\begin{split}
\beta_U e^{z_{\Delta,2}}&=\bigl(x^{(\Delta)}(t_{\max})+\beta_U\bigr)e^{t_{\max}}\\
&=(\overline x+\beta_U)e^{t_{\max}}+a(e^{\sigma}-1)e^{\Delta}.
\end{split}
\]
Since $(\overline{x}+\beta_U)e^{t_{\max}}=\beta_{U}e^{\tilde{z}_2}$, we conclude that
\[
\beta_U e^{z_{\Delta,2}}=\beta_U e^{\tilde{z}_2} +a(e^{\sigma}-1)e^{\Delta}.
\]
For the cycle length we obtain $T(\Delta)=z_{\Delta,2}+\tau>\tilde{z}_2+\tau=\tilde{T}$ and the formula for $T(\Delta)$ follows.
\end{proof}

\begin{proof}[Proof of Remark \ref{rem:rem1}]
\begin{enumerate}
\item Let $\delta_1>0$. We first show that the expressions defining $T(\Delta)$ in Proposition \ref{c:prop1} and in Proposition \ref{c:prop2} yield the same value for $\Delta=\delta_1$.

Consider the argument of $\ln$ in \eqref{eqn:prop5.2}.  We have
\begin{multline*}
\dfrac{a(e^{\sigma}-1)}{\beta_U} e^{\Delta-\tilde{z}_2}+\dfrac{a(\beta_L+\beta_U)e^{\tau+\tilde{z}_1-\tilde{z}_2}}{\beta_U(\beta_L+a)}
(e^{\Delta-\tilde{z}_1}-1)\\
=\dfrac{ae^{-\tilde{z}_2}}{\beta_U(\beta_L+a)} \Bigl(\bigl((\beta_L+a)(e^{\sigma}-1)+(\beta_L+\beta_U)e^{\tau}\bigr)e^{\Delta}
-(\beta_L+\beta_U)e^{\tau+\tilde{z}_1}\Bigr).
\end{multline*}
From \eqref{d:delta1} for $\delta_1$ it follows that
\[
\beta_Le^\sigma+a(e^\sigma-1)=\beta_Le^{\tilde{z}_1-\delta_1},
\]
which gives
\[
(\beta_L+a)(e^{\sigma}-1)=\beta_Le^{\tilde{z}_1-\delta_1}-\beta_L.
\]
Since
\[
-\beta_L+(\beta_L+\beta_U)e^{\tau}=e^{\tau}(\beta_U+\overline{x})=\beta_Ue^{\tilde{z}_2-\tilde{z}_1},
\]
we obtain
\begin{multline*}
\dfrac{a(e^{\sigma}-1)}{\beta_U} e^{\Delta-\tilde{z}_2}+\dfrac{a(\beta_L+\beta_U)e^{\tau+\tilde{z}_1-\tilde{z}_2}}{\beta_U(\beta_L+a)}
(e^{\Delta-\tilde{z}_1}-1)\\
=\dfrac{ae^{-\tilde{z}_2}}{\beta_U(\beta_L+a)} \left((\beta_Le^{\tilde{z}_1-\delta_1}+\beta_Ue^{\tilde{z}_2-\tilde{z}_1})e^{\Delta}
-(\beta_Le^{\tilde{z}_1}+\beta_Ue^{\tilde{z}_2})\right),
\end{multline*}
which for $\Delta=\delta_1$ becomes
\[
\dfrac{ae^{-\tilde{z}_2}}{\beta_U(\beta_L+a)} \left((\beta_Le^{\tilde{z}_1-\delta_1}+\beta_Ue^{\tilde{z}_2-\tilde{z}_1})e^{\Delta}
-(\beta_Le^{\tilde{z}_1}+\beta_Ue^{\tilde{z}_2})\right)=\dfrac{a(e^{\delta_1-\tilde{z}_1}-1)}{\beta_L+a}.
\]
We have
\[
\begin{split}
e^{\delta_1-\tilde{z}_1}-1&=\dfrac{\beta_L}{\beta_Le^{\sigma}+a(e^{\sigma}-1)}-1
=-\dfrac{(\beta_L+a)(e^{\sigma}-1)}{\beta_Le^{\sigma}+a(e^{\sigma}-1)}
\end{split},
\]
which leads to
\[
\begin{split}
\dfrac{a(e^{\delta_1-\tilde{z}_1}-1)}{\beta_L+a}&=-\dfrac{a(e^{\sigma}-1)}{\beta_Le^{\sigma}+a(e^{\sigma}-1)}
=-\dfrac{a(e^{\sigma}-1)}{\beta_L}e^{\delta_1-\tilde{z}_1}
\end{split}
\]
and shows that the formulae for $T(\Delta)$ from Propositions 5.1 and 5.2 yield the same value for $\Delta=\delta_1>0$.
From \eqref{eqn:prop5.1} of Proposition \ref{c:prop1}, this value is strictly less than $\tilde{T}$.

\item By continuity, we infer $T(\Delta)<\tilde{T}$ for $\Delta$ close to $\delta_1>0$.
\end{enumerate}
\end{proof}

\begin{proof}[Proof of Proposition \ref{c:prop4}]
Since $x^{(\Delta)}(t)>0$ for $\tilde{z}_1<t<\Delta+\sigma$ and $\tilde{z}_1+\tau<\Delta+\sigma$ we obtain that on $[\Delta+\sigma,\infty)$,
$$
x^{(\Delta)}(t)=-\beta_U+\bigl(x^{(\Delta)}(\Delta+\sigma)+\beta_U\bigr)e^{-\left(t-(\Delta+\sigma)\right)}\quad\text{as long as}\quad 0<x^{(\Delta)}(t-\tau).
$$
As $-\beta_U<0$ there is a smallest zero $z_{\Delta,2}$ of $x^{(\Delta)}$ in $[\Delta+\sigma,\infty)$, and
$$
x^{(\Delta)}(z_{\Delta,2}+t)=\tilde{x}(\tilde{z}_2+t)\quad\text{for all}\quad t\ge0.
$$
It follows that
$$
T(\Delta)=z_{\Delta,2}+(\tilde{z}_3-\tilde{z}_2)-\tilde{z}_1=z_{\Delta,2}+\tilde{T}-\tilde{z}_2=z_{\Delta,2}-\tilde{z}_0=z_{\Delta,2}+\tau.
$$
Moreover, $\overline{x}_{\Delta}=x^{(\Delta)}(t_{\max})$ if $\Delta\le \hat{\delta}_2$ (in which case $x^{(\Delta)}$ is decreasing on $[t_{\max},\Delta+\sigma]$), while for $\Delta>\hat{\delta}_2$ the function $x^{(\Delta)}$ is increasing on $[t_{\max},\Delta+\sigma]$ and $\overline{x}_{\Delta}=x^{(\Delta)}(\Delta+\sigma)>x^{(\Delta)}(t_{\max})\ge\overline{x}$. Hence $\overline{x}_{\Delta}\ge\overline{x}$. Obviously, $\underline{x}_{\Delta}=\underline{x}$. Also, $\overline{x}_{\Delta}$ is strictly decreasing as a function of $\Delta\in I_{RPFP}$, see \eqref{e:xprf}.

From
$$
0=x^{(\Delta)}(z_{\Delta,2})=-\beta_U+\bigl(x^{(\Delta)}(\Delta+\sigma)+\beta_U\bigr)
e^{-(z_{\Delta,2}-(\Delta+\sigma))}
$$
it follows that
\begin{align*}
\beta_U e^{z_{\Delta,2}} & = \bigl(x^{(\Delta)}(\Delta+\sigma)+\beta_U\bigr)e^{
\Delta+\sigma}\\
 & = \bigl(\beta_Ue^{\tilde{z}_2-(
\Delta+\sigma)} +a(1-e^{-\sigma})\bigr)e^{
\Delta+\sigma}\quad\text{(see \eqref{e:xprf})}\\
 & =  \beta_Ue^{\tilde{z}_2}+a(e^{\sigma}-1)e^{\Delta}.
\end{align*}
Hence $z_{\Delta,2}>\tilde{z}_2$, and thereby
$$
T(\Delta)=z_{\Delta,2}+\tau>\tilde{z}_2+\tau=\tilde{T}
$$
Furthermore,
\begin{align*}
T(\Delta) & =  z_{\Delta,2}+\tau=\tilde{T}+z_{\Delta,2}-\tilde{z}_2\\
& =  \tilde{T}+\ln\left(1+\dfrac{a(e^{\sigma}-1)e^{\Delta-\tilde{z}_2}}{\beta_U}\right),
\end{align*} and the map
$$
I_{RPFP}\ni\Delta\mapsto T(\Delta)\in\mathbb{R}
$$
is strictly increasing.
\end{proof}

\begin{proof}[Proof of Proposition \ref{c:prop5}]

From $x^{(\Delta)}(\Delta+\sigma)<0<x^{(\Delta)}(t_{\max})$ we know that $x^{(\Delta)}$ is strictly decreasing on $[t_{\max},\Delta+\sigma]$. It follows that there is a single zero in this interval, which is given by
$$
0=x^{(\Delta)}(z_{\Delta,2})=-\beta_U+a+\bigl(x^{(\Delta)}(t_{\max})+\beta_U-a\bigr)e^{-(z_{\Delta,2}-t_{\max})},
$$
or equivalently,
\[
(\beta_U-a)e^{z_{\Delta,2}}  =  \bigl(x^{(\Delta)}(t_{\max})+\beta_U-a\bigr)e^{t_{\max}}.
\]
Also,
\begin{align*}
(\beta_U-a)e^{z_{\Delta,2}} & =  \bigl(\overline x+a(1-e^{\Delta-t_{\max}})+\beta_U-a\bigr)e^{t_{\max}}\\
& =  (\overline x+\beta_U)e^{t_{\max}}-ae^{\Delta},
\end{align*}
and we arrive at
\begin{equation}\label{e:z2P}
(\beta_U-a)e^{z_{\Delta,2}}=  \beta_U e^{\tilde{z}_2}-ae^{\Delta}.
\end{equation}
Since $\beta_U>a$ we infer that  the map $I_{RPFN}\ni\Delta\mapsto e^{z_{\Delta,2}-\Delta}\in\mathbb{R}$ is strictly decreasing.
%Since $\beta_U>a$ in the present case we infer that  the map $I_{RPFN}\ni\Delta\mapsto e^{z_{\Delta,2}-\Delta}\in\mathbb{R}$ is strictly decreasing.

We have $z_{\Delta,2}<\Delta+\sigma\le z_{\Delta,2}+\tau$, and on $[\Delta+\sigma,z_{\Delta,2}+\tau]$,
$$
x^{(\Delta)}(t)=-\beta_U+\bigl(x^{(\Delta)}(\Delta+\sigma)+\beta_U\bigr)e^{-(t-(\Delta+\tau))}
$$
is strictly decreasing and negative because we have $x^{(\Delta)}(\Delta+\sigma)+\beta_U>0$ from \eqref{e:xprf}. It follows that $x^{(\Delta)}$ is strictly increasing  on $[z_{\Delta,2}+\tau,\infty)$ as long as $x^{(\Delta)}(t-\tau)<0$. There is a smallest zero $z_{\Delta,3}$ of $x^{(\Delta)}$ in this interval, and
\begin{align*}
x^{(\Delta)}(z_{\Delta,3}+t) & =  \tilde{x}(\tilde{z}_1+t)\quad\text{for all}\quad t\ge0,\\
T(\Delta) & =  z_{\Delta,3}-\tilde{z}_1,\\
\overline{x}_{\Delta} & =  x^{(\Delta)}(t_{\max})\ge\overline{x},\\
\underline{x}_{\Delta} & =  x^{(\Delta)}(z_{\Delta,2}+\tau).
\end{align*}

We compute
\[
\underline{x}_{\Delta}=x^{(\Delta)}(z_{\Delta,2}+\tau)  =  -\beta_U+\bigl(x^{(\Delta)}(\Delta+\sigma)+\beta_U\bigr)
e^{-\left(z_{\Delta,2}+\tau-(\Delta+\sigma)\right)}
\]
and use
\[
x^{(\Delta)}(\Delta+\sigma)+\beta_U=\beta_U e^{\tilde{z}_2-(\Delta+\sigma)}+a(1-e^{-\sigma})>0
\]
from \eqref{e:xprf}. This gives
$$
\underline{x}_{\Delta}=-\beta_U+\bigl(\beta_U e^{\tilde{z}_2} -a e^{\Delta}\bigr)e^{-(z_{\Delta,2}+\tau)} +ae^{-\left(z_{\Delta,2}+\tau-(\Delta+\sigma)\right)}.
$$
With \eqref{e:z2P} we obtain
\begin{align*}
\underline{x}_{\Delta} & =  -\beta_U+(\beta_U-a)e^{-\tau} +ae^{-\left(z_{\Delta,2}+\tau-(\Delta+\sigma)\right)}\\
& =  -\beta_U+\beta_U e^{-\tau}+ae^{-\tau}(e^{-z_{\Delta,2}+\Delta+\sigma}-1).
\end{align*}
Since $\Delta+\sigma>z_{\Delta,2}$ we conclude that
\[
\underline{x}_{\Delta}>-\beta_U+\beta_U e^{-\tau}=\underline{x}.
\]

We turn to the cycle length $T(\Delta)=z_{\Delta,3}-\tilde{z}_1$.
The equation for $z_{\Delta,3}$, namely,
\begin{align*}
0 & = x^{(\Delta)}(z_{\Delta,3})=\beta_L+\bigl(x^{(\Delta)}(z_{\Delta,2}+\tau)-\beta_L\bigr)e^{-\left(z_{\Delta,3}-(z_{\Delta,2}+\tau)\right)}\\
& =  \beta_L+(\underline{x}_{\Delta}-\beta_L)e^{-\left(z_{\Delta,3}-(z_{\Delta,2}+\tau)\right)}
\end{align*}
yields
\begin{align*}
\beta_Le^{z_{\Delta,3}} & =  (\beta_L-\underline{x}_{\Delta})e^{z_{\Delta,2}+\tau}\\
& =  (\beta_L+\beta_U)e^{z_{\Delta,2}+\tau}-(\beta_U e^{\tilde{z}_2} -a e^{\Delta})- ae^{\Delta+\sigma}\\
& \quad \text{(with the formula for}\quad\underline{x}_{\Delta}\quad\text{and \eqref{e:z2P})}\\
& =  (\beta_L+\beta_U)e^{z_{\Delta,2}+\tau}-\beta_Ue^{\tilde{z}_2}-a(e^{\sigma}-1)e^{\Delta}.
\end{align*}
Since $\tilde{z}_3=\tilde{z}_2+\tau+\tilde{z}_1$, we have
\[
\begin{split}
\beta_Le^{\tilde{z}_3}+\beta_Ue^{\tilde{z}_2}&=(\beta_L e^{\tilde{z}_1}+\beta_U e^{-\tau})e^{\tilde{z}_2+\tau}\\
&=(\beta_L+\beta_U)e^{\tilde{z}_2+\tau},
\end{split}
\]
which gives
\[
\begin{split}
\beta_Le^{z_{\Delta,3}}&= \beta_Le^{\tilde{z}_3}+(\beta_L+\beta_U)e^{\tau}(e^{z_{\Delta,2}}-e^{\tilde{z}_2})-a(e^{\sigma}-1)e^{\Delta}.
\end{split}
\]
Now use \eqref{e:z2P}  again to obtain
\[
\begin{split}
\beta_Le^{z_{\Delta,3}}&= \beta_Le^{\tilde{z}_3}-\dfrac{a(\beta_L+\beta_U)e^{\tau}}{\beta_U-a}
(e^{\Delta}-e^{\tilde{z}_2})-a(e^{\sigma}-1)e^{\Delta},
\end{split}
\]
which implies the formula for $T(\Delta)$.
We have
\[
\dfrac{(\beta_L+\beta_U)}{\beta_U-a}e^{\tau}+e^{\sigma}-1
=\dfrac{\beta_Le^{\tilde{z}_1+\tau}+(\beta_U-a)e^{\sigma}+a}{\beta_U-a}.
\]
Thus,  the map $I_{RPFN} \ni\Delta\mapsto T(\Delta)\in\mathbb{R}$ is strictly decreasing, since $\beta_U>a$.
\end{proof}

\begin{proof}[Proof of Remark \ref{rem:rem2}]
\begin{enumerate}

\item %Let $\delta_2<\infty$.
We first show that the expressions defining $T(\Delta)$ in Proposition \ref{c:prop3} and in Proposition \ref{c:prop5} (Equations \ref{e:TPbetaLB} and \ref{eqn:prop5.5} respectively) yield the same value for $\Delta=\delta_2$.

Consider the argument of $\ln$ in \eqref{eqn:prop5.5} of Proposition \ref{c:prop5}.
We have
\begin{multline*}
\dfrac{a(e^{\sigma}-1)}{\beta_L}e^{\Delta-\tilde{z}_1-\tilde{T}}+\dfrac{a(\beta_L+\beta_U)e^{-\tilde{z}_1}}
{\beta_L(\beta_U-a)}e^{\Delta-\tilde{z}_2}\\=
\dfrac{ae^{\Delta-\tilde{z}_1-\tilde{T}}}{\beta_L(\beta_U-a)}\Bigl((\beta_U-a)(e^{\sigma}-1)+(\beta_L+\beta_U)e^{\tau}\Bigr)
\end{multline*}
Note that  from the definition of $\delta_2$ it follows that
\[
\beta_Ue^{\sigma}-a(e^{\sigma}-1)=\beta_Ue^{\tilde{z}_2-\delta_2},
\]
which gives
\[
\begin{split}
(\beta_U-a)(e^{\sigma}-1)+(\beta_L+\beta_U)e^{\tau}&=\beta_Ue^{\sigma}-a(e^{\sigma}-1)+(\beta_L-\underline{x})e^{\tau}\\
&=\beta_Ue^{\tilde{z}_2-\delta_2}+\beta_L e^{t_{\max}}
\end{split}
\]
and leads to
\begin{multline*}
\dfrac{a(e^{\sigma}-1)}{\beta_L}e^{\Delta-\tilde{z}_1-\tilde{T}}+\dfrac{a(\beta_L+\beta_U)e^{-\tilde{z}_1}}
{\beta_L(\beta_U-a)}(e^{\Delta-\tilde{z}_2}-1)\\ =
\dfrac{ae^{-\tilde{z}_1}}{\beta_L(\beta_U-a)}\Bigl((\beta_Ue^{-\tau-\delta_2}+\beta_L e^{\tilde{z}_1-\tilde{z}_2})e^{\Delta}-(\beta_L+\beta_U)\Bigr).
\end{multline*}
Observe that for $\Delta=\delta_2$ we have
\begin{equation*}
(\beta_Ue^{-\tau-\delta_2}+\beta_L e^{\tilde{z}_1-\tilde{z}_2})e^{\Delta}= \beta_U e^{-\tau}+\beta_Le^{\tilde{z}_1+\delta_2-\tilde{z}_2}.
\end{equation*}
Using this we obtain that for $\Delta=\delta_2$,
\begin{multline*}
\dfrac{a(e^{\sigma}-1)}{\beta_L}e^{\Delta-\tilde{z}_1-\tilde{T}}+\dfrac{a(\beta_L+\beta_U)e^{-\tilde{z}_1}}
{\beta_L(\beta_U-a)}(e^{\Delta-\tilde{z}_2}-1)\\ =
\dfrac{a}{(\beta_U-a)}(e^{\delta_2-\tilde{z}_2}-1).
\end{multline*}
We have
\begin{align*}
\dfrac{a}{(\beta_U-a)}(e^{\delta_2-\tilde{z}_2}-1)&
=\dfrac{a}{(\beta_U-a)}\left(\dfrac{\beta_U}{\beta_Ue^{\sigma}-a(e^{\sigma}-1)}-1\right)\\
&=-\dfrac{a(e^{\sigma}-1)}{\beta_Ue^{\sigma}-a(e^{\sigma}-1)}\\
&=-\dfrac{a(e^{\sigma}-1)}{\beta_U}e^{\delta_2-\tilde{z}_2}.
\end{align*}
Thus, the formulae \eqref{e:TPbetaLB} and \eqref{eqn:prop5.5} from Propositions 5.3 and 5.5 are the same for $\Delta=\delta_2$.
Since
$$
-\dfrac{a(e^{\sigma}-1)}{\beta_U}e^{\delta_2-\tilde{z}_2}<0
$$
we also deduce that the value given by both equations for $\Delta=\delta_2$ is strictly larger than $\tilde{T}$.

\item By continuity, we infer $T(\Delta)>\tilde{T}$ for $\Delta$ close to $\delta_2$.
\end{enumerate}
\end{proof}

\begin{proof}[Proof of Proposition \ref{c:prop6}]
From $0\le x^{(\Delta)}(t)$ on $[t_{\max},\Delta+\sigma]$ we infer that $x^{(\Delta)}$ decreases on $[\Delta+\sigma,\infty)$ as long as $x^{(\Delta)}(t-\tau)>0$. This yields the existence of a smallest zero $z_{\Delta,2}$ in $[\Delta+\sigma,\infty)$, and
$x^{(\Delta)}(t+z_{\Delta,2})=\tilde{x}(t+\tilde{z}_2)$ for all $t\ge0$.

Hence
$$
T(\Delta)=z_{\Delta,2}+\tilde{z}_3-\tilde{z}_2-\tilde{z}_1=z_{\Delta,2}+\tilde{T}-\tilde{z}_2=z_{\Delta,2}+\tau.
$$
We have
$$
-\beta_U+\bigl(x^{(\Delta)}(\Delta+\sigma)+\beta_U\bigr)e^{-(z_{\Delta,2}-(\Delta+\sigma))}=0
$$
and conclude that
\[
\begin{split}
\beta_U e^{z_{\Delta,2}}&=\bigl(x^{(\Delta)}(\Delta+\sigma)+\beta_U\bigr)e^{\Delta+\sigma}\\
&=\bigl(\beta_Ue^{\tilde{z}_2-\sigma -\Delta}+a(1-e^{- \sigma })\bigr)e^{\Delta+\sigma}\\
&=\beta_Ue^{\tilde{z}_2}+a(e^{\sigma}-1)e^{\Delta},
\end{split}
\]
which implies \eqref{e:TPbetaLB}, the desired formula for $T(\Delta)$.
\end{proof}

\begin{proof}[Proof of Proposition \ref{c:prop7}]
Since $[\Delta,\Delta+\sigma]\subset(t_{\max},\tilde{T})$ we have
$$
x^{(\Delta)}(t)=-\beta_U+a+\bigl(x^{(\Delta)}(\Delta)+\beta_U-a\bigr)e^{-(t-\Delta)}\quad\text{on}\quad[\Delta,\Delta+\sigma].
$$
Using $x^{(\Delta)}(\Delta)+\beta_U-a>0$ we see that $x^{(\Delta)}$ is strictly decreasing on $[\Delta,\Delta+\sigma]$, and has a unique zero $z_{\Delta,2}$ in $[\Delta,\Delta+\sigma)$, which is given implicitly by
$$
0=x^{(\Delta)}(z_{\Delta,2})=-\beta_U+a+\bigl(x^{(\Delta)}(\Delta)+\beta_U-a\bigr)
e^{-(z_{\Delta,2}-\Delta)}.
$$
Combining this with \eqref{e:x2+DbU} gives
\[
(\beta_U-a)e^{z_{\Delta,2}}=\beta_Ue^{\tilde{z}_2}-ae^{\Delta},
\]
which is the same as \eqref{e:z2P}.
As $z_{\Delta,2}\ge\Delta$ we infer
\[
\beta_U(e^{z_{\Delta,2}}-e^{\tilde{z}_2})=a(e^{z_{\Delta,2}}-e^{\Delta})\ge0.
\]
Hence $z_{\Delta,2}\ge\tilde{z}_2$. Since $x^{(\Delta)}(\Delta)\ge 0$ and $\beta_U>a$, the map $[\Delta,\Delta+\sigma] \ni t\mapsto x^{(\Delta)}(t)$ is strictly decreasing. The rest of the proof is the same as the proof of Proposition \ref{c:prop5} starting  after \eqref{e:z2P}.
\end{proof}

\begin{proof}[Proof of Proposition \ref{c:prop8}]
\begin{enumerate}
\item We have $x^{(\Delta)}(\tilde{z}_2)=0$, and the function $x^{(\Delta)}$ is strictly decreasing on $[\tilde{z}_2,\Delta]$, monotone on $[\Delta,\Delta+\sigma]$ with $x^{(\Delta)}(t)<0$ for $\Delta\le t<\Delta+\sigma$, and strictly decreasing on $[\Delta+\sigma,\tilde{T}]$. It follows that $x^{(\Delta)}$ is strictly increasing on $[\tilde{T},\infty)$ as long as $x^{(\Delta)}(t-\tau)<0$. This yields a first zero $z_{\Delta,3}$ of $x^{(\Delta)}$ in $[\tilde{T},\infty)$ and $x^{(\Delta)}(z_{\Delta,3}+t)=\tilde{x}(\tilde{z}_1+t)$ for all $t\ge0$, and thus
$$
T(\Delta)=z_{\Delta,3}+(\tilde{z}_2-\tilde{z}_1)-\tilde{z}_2=z_{\Delta,3}-\tilde{z}_1,
$$
$$
\underline{x}_{\Delta}=\min\{x^{(\Delta)}(\Delta),x^{(\Delta)}(\tilde{T})\},
$$
and
$$
\overline{x}_{\Delta}=\overline{x}.
$$
Observe that
\[
\begin{split}
x^{(\Delta)}(\tilde{T})&=-\beta_U+\beta_U e^{-\tau}+a ( e^{ \sigma }-1)e^{\Delta-\tilde{T}}\\
&=\underline{x}+a ( e^{ \sigma }-1)e^{\Delta-\tilde{T}}>\underline{x}.
\end{split}
\]
Using this and $x^{(\Delta)}(\Delta)=\tilde{x}(\Delta)>\underline{x}$ we have $\underline{x}_{\Delta}>\underline{x}$.
The equation
$$
0=x^{(\Delta)}(z_{\Delta,3})=\beta_L+\bigl(x^{(\Delta)}(\tilde{T})-\beta_L\bigr)e^{-(z_{\Delta,3}-\tilde{T})}
$$
is equivalent to
$$
\beta_L e^{z_{\Delta,3}}=\bigl(\beta_L-x^{(\Delta)}(\tilde{T})\bigr)e^{\tilde{T}},
$$
which gives
\[
\begin{split}
\beta_L e^{z_{\Delta,3}}&=(\beta_L+\beta_U)e^{\tilde{T}}-\bigl(\beta_U e^{-\tau}+a ( e^{ \sigma }-1)e^{\Delta-\tilde{T}}\bigr)e^{\tilde{T}}\\
&=(\beta_L+\beta_U-\beta_Ue^{-\tau}) e^{\tilde{T}}-a ( e^{ \sigma }-1)e^{\Delta}.
\end{split}
\]
We have $\beta_L+\beta_U-\beta_Ue^{-\tau}=\beta_L e^{\tilde{z}_1}$
and we conclude  that
\[
\beta_L e^{z_{\Delta,3}}=\beta_Le^{\tilde{z}_1+\tau+\tilde{z}_2}-a(e^{\sigma}-1)e^{\Delta},
\]
from which we obtain \eqref{e:prop5.9} for $T(\Delta)=z_{\Delta,3}-\tilde{z}_1$.

\item Observe that for every $\Delta\in I_{FNFN}$ we have
$$
x^{(\Delta)}(\tilde{T})-\tilde{x}(\Delta)=g(\Delta)
$$
with the strictly increasing function
$$
g:\mathbb{R}\ni\Delta\mapsto a(e^{\sigma}-1)e^{\Delta-\tilde{T}}+\beta_Ue^{-\tau}-\beta_Ue^{\tilde{z}_2-\Delta}\in\mathbb{R}
$$
which has a single zero at $\Delta=\overline{\delta}$ since $g(\Delta)\to-\infty$ as $\Delta\to-\infty$,
$g(\Delta)\to\infty$ as $\Delta\to\infty$, and $g(\Delta)=0$ if and only if
$$
\beta_U(e^{\tilde{T}-\Delta})^2-\beta_Ue^{\tilde{T}-\Delta}-a(e^{\sigma}-1)e^{\tau}=0,
$$
or equivalently,
\begin{align*}
e^{\tilde{T}-\Delta} & =  \dfrac{1}{2}+\sqrt{\dfrac{1}{4}+\dfrac{a(e^{\sigma}-1)e^{\tau}}{\beta_U}}\\
& =  \dfrac{\beta_U+\sqrt{\beta_U ^2 +4a\beta_U(e^{\sigma}-1)e^{\tau}}}{2\beta_U}.
\end{align*}

For $\Delta<\overline{\delta}$ in $I_{FNFN}$ we have $g(\Delta)<0$, hence
$$
\underline{x}_{\Delta}=\min\{\tilde{x}(\Delta),x^{(\Delta)}(\tilde{T})\}=x^{(\Delta)}(\tilde{T}),
$$
and the formula \eqref{e:xprf} for $x^{(\Delta)}(\tilde{T})$ in Part 1 above shows that in case $I_{FNFN}\cap(-\infty,\overline{\delta})\neq\emptyset$ the map
$$
I_{FNFN}\cap(-\infty,\overline{\delta})\ni\Delta\mapsto\underline{x}_{\Delta}\in\mathbb{R}
$$
is strictly increasing. For $\Delta>\overline{\delta}$ in $I_{FNFN}$ we get $0<g(\Delta)$, hence
$$
\underline{x}_{\Delta}=\min\{\tilde{x}(\Delta),x^{(\Delta)}(\tilde{T})\}=\tilde{x}(\Delta),
$$
and we see that in case $I_{FNFN}\cap(\overline{\delta},\infty)\neq\emptyset$ the map
$$
I_{FNFN}\cap(\overline{\delta},\infty)\ni\Delta\mapsto\underline{x}_{\Delta}\in\mathbb{R}
$$
is strictly decreasing. \qedhere
\end{enumerate}
\end{proof}

\begin{proof}[Proof of Remark~\ref{rem:rem3}]
The strictly increasing function $g$ from the previous proof satisfies
\begin{align*}
g(\tilde{z}_2) & =  a(e^{\sigma}-1)e^{-\tau}+\beta_U(e^{-\tau}-1)\\
& <  \beta_U\bigl((e^{\sigma}-1)e^{-\tau}+e^{-\tau}-1\bigr)\quad\text{(with}\quad a<\beta_U)\\
& =  \beta_U(e^{\sigma-\tau}-1)\\
& \le   0\quad\text{(since }\sigma\le\tau)
\end{align*}
and
$$
g(\tilde{T}-\sigma)=a(e^{\sigma}-1)e^{-\sigma}+\beta_Ue^{-\tau}-\beta_Ue^{\tilde{z}_2-\tilde{T}+\sigma}>0
$$
if and only if $\beta_U e^{\sigma - \tau } < a$.
By the intermediate value theorem the only zero $\overline{\delta}$ of $g$ belongs to the interval $(\tilde{z}_2,\tilde{T}-\sigma)$ in case $ \beta_U e^{\sigma - \tau } < a$. Otherwise $\overline{\delta}\ge \tilde{T}-\sigma$.
\end{proof}

\begin{proof}[Proof of Proposition \ref{c:prop9}]
From $0>x^{(\Delta)}(\Delta+\sigma)>x^{(\Delta)}(\tilde{T})$ we obtain $x^{(\Delta)}(t)<0$ on $(\tilde{z}_2,\Delta+\sigma]$. It follows that
on $[\Delta+\sigma,\infty)$ the function $x^{(\Delta)}$ is strictly increasing as long as $x^{(\Delta)}(t-\tau)<0$, and there is a smallest zero $z_{\Delta,3}$ of $x^{(\Delta)}$ in $[\Delta+\sigma,\infty)$. Moreover, $x^{(\Delta)}(z_{\Delta,3}+t)=\tilde{x}(\tilde{z}_1+t)$ for all $t\ge0$, hence
$$
T(\Delta)=z_{\Delta,3}+(\tilde{z}_2-\tilde{z}_1)-\tilde{z}_2=z_{\Delta,3}-\tilde{z}_1,
$$
and
$$
\underline{x}_{\Delta}=x^{(\Delta)}(\tilde{T})>\underline{x},
$$
and $\overline{x}_{\Delta}=\overline{x}$. As the map $[\tilde{T}-\sigma,\tilde{T})\ni\Delta\mapsto x^{(\Delta)}(\tilde{T})\in\mathbb{R}$
 is strictly decreasing we infer that also the map $I_{FNRN}\ni\Delta\mapsto\underline{x}_{\Delta}\in\mathbb{R}$ is strictly decreasing.

From
$$
0=x^{(\Delta)}(z_{\Delta,3})=\beta_L+\bigl(x^{(\Delta)}(\Delta+\sigma)-\beta_L\bigr)
e^{-(z_{\Delta,3}-(\Delta+\sigma))}
$$
we obtain
\begin{align*}
\beta_L e^{z_{\Delta,3}} & =  \bigl(\beta_L- x^{(\Delta)}(\Delta+\sigma)\bigr)e^{
\Delta+\sigma}\\
 & =  \bigl(\beta_L-\beta_L+\beta_Le^{\tilde{z}_1-(
\Delta+\sigma-\tilde{T})}-a(1-e^{-\sigma})\bigr)e^{
\Delta+\sigma}\\
 & = \beta_Le^{\tilde{z}_1+\tilde{T}}- a(e^{\sigma}-1)e^{
\Delta},
\end{align*}
and the formula for $T(\Delta)=z_{\Delta,3}-\tilde{z}_1$ follows.
\end{proof}

\begin{proof}[Proof of Proposition \ref{c:prop10}]
We have $x^{(\Delta)}(t)<0$ on $(\tilde{z}_2,\tilde{T}]$, and there is a first zero of $x^{(\Delta)}$ in $(\tilde{T},\Delta+\sigma]$, which is given by
$$
0=x^{(\Delta)}(z_{\Delta,3})=\beta_L+a+\bigl(x^{(\Delta)}(\tilde{T})-(\beta_L+a)\bigr)e^{-(z_{\Delta,3}-\tilde{T})},
$$
or equivalently,
\begin{align*}
(\beta_L+a)e^{z_{\Delta,3}} & = \bigl(\beta_L+a-x^{(\Delta)}(\tilde{T})\bigr)e^{\tilde{T}}\\
& =  \bigl(\beta_L+a-(\underline{x}+a-ae^{\Delta-\tilde{T}
})\bigr)e^{\tilde{T}}\\
& =  (\beta_L-\underline{x})e^{\tilde{T}}+ae^{\Delta}\\
&=\beta_L e^{\tilde{z}_1+\tilde{T}}+ae^{\Delta}.
\end{align*}
Incidentally, this shows that the map $I_{FNRP}\ni\Delta\mapsto z_{\Delta,3}-\Delta\in\mathbb{R}$ is strictly decreasing.

On $[z_{\Delta,3},\Delta+\sigma]$ the function $x^{(\Delta)}$ is strictly increasing. Since $x^{(\Delta)}(t-\tau)<0$ on $(\tilde{z}_2,z_{\Delta,3})$, we infer that $x^{(\Delta)}$ is strictly increasing on $[z_{\Delta,3},z_{\Delta,3}+\tau]$. Hence $0<x^{(\Delta)}(t)$ on $(z_{\Delta,3},z_{\Delta,3}+\tau]$. This implies that $x^{(\Delta)}$ is strictly decreasing on $[z_{\Delta,3}+\tau,\infty)$ as long as $x^{(\Delta)}(t-\tau)>0$, and that there is a first zero $z_{\Delta,4}$ in $(z_{\Delta,3}+\tau,\infty)$. We obtain
$x^{(\Delta)}(z_{\Delta,4}+t)=\tilde{x}(\tilde{z}_2+t)$ for all $t\ge0$, which yields
$$
T(\Delta)=z_{\Delta,4}-\tilde{z}_2,
$$
$\underline{x}_{\Delta}=\min\{\tilde{x}(\Delta),x^{(\Delta)}(\tilde{T})\}>\underline{x}$, and
$$
\overline{x}_{\Delta}=x^{(\Delta)}(z_{\Delta,3}+\tau).
$$
As in the proof of Proposition \ref{c:prop9} we infer that the map $I_{FNRP}\ni\Delta\mapsto\underline{x}_{\Delta}\in\mathbb{R}$ is strictly decreasing.

From the formula for $x^{(\Delta)}(\Delta+\sigma)$ we obtain
\begin{align*}
x^{(\Delta)}(z_{\Delta,3}+\tau) & =  \beta_L+\bigl(x^{(\Delta)}(\Delta+\sigma)-\beta_L\bigr)e^{-(z_{\Delta,3}+\tau-(\Delta+\sigma))}\\
&= \beta_L-\beta_Le^{\tilde{z}_1+\tilde{T}-(z_{\Delta,3}+\tau)}
+ a(e^{\sigma}-1)e^{\Delta}e^{-(z_{\Delta,3}+\tau)},
\end{align*}
which  can be rewritten as
\[
\overline{x}_{\Delta}=\beta_L-\bigl(\beta_Le^{\tilde{z}_1+\tilde{T}}+ae^{\Delta}\bigr)e^{-(z_{\Delta,3}+\tau)}
+ ae^{\sigma+\Delta}e^{-(z_{\Delta,3}+\tau)}.
\]
Hence,
\[
\begin{split}
\overline{x}_{\Delta}&=\beta_L-(\beta_L+a)e^{-\tau}+ae^{\sigma+\Delta}e^{-(z_{\Delta,3}+\tau)}\\
&=\overline{x} +ae^{-\tau}(e^{\sigma+\Delta-z_{\Delta,3}}-1)\\
&>\overline{x}.
\end{split}
\]
We also know that the map $I_{FNRP}\ni\Delta\mapsto \overline{x}_{\Delta}\in\mathbb{R}$ is strictly increasing.

From
$$
0=x^{(\Delta)}(z_{\Delta,4})=-\beta_U+\bigl(x^{(\Delta)}(z_{\Delta,3}+\tau)+\beta_U\bigr)e^{-(z_{\Delta,4}-(z_{\Delta,3}+\tau))}
$$
we have
\[
\beta_Ue^{z_{\Delta,4}}=
(\beta_L+\beta_U)e^{z_{\Delta,3}+\tau}-\beta_Le^{\tilde{z}_1+\tilde{T}}+a(e^\sigma-1)e^{\Delta}.
\]
Use \eqref{e:betaszeros} to obtain
\[
\beta_Le^{\tilde{z}_1+\tilde{T}}+\beta_U e^{\tilde{z}_2+\tilde{T}}=(\beta_L+\beta_U)e^{\tau+\tilde{z}_1+\tilde{T}}
\]
which, combined  with the previous equation, gives
\[
\beta_Ue^{z_{\Delta,4}}=\beta_U e^{\tilde{z}_2+\tilde{T}}+ (\beta_L+\beta_U)e^{\tau}\bigl(e^{z_{\Delta,3}}-e^{\tilde{z}_1+\tilde{T}}\bigr)+a(e^\sigma-1)e^{\Delta}.
\]
Now the formula for $z_{\Delta,3}$ leads to
\[
\beta_Ue^{z_{\Delta,4}}=\beta_U e^{\tilde{z}_2+\tilde{T}}+\dfrac{a(\beta_L+\beta_U)e^{\tau}}{\beta_L+a}
\bigl(e^{\Delta}-e^{\tilde{z}_1+\tilde{T}}\bigr)+a(e^\sigma-1)e^{\Delta}
\]
which yields the formula for $T(\Delta)=z_{\Delta,4}-\tilde{z}_2$.
\end{proof}

\bibliographystyle{spbasic} 
\bibliography{stemcleaned_v4}

\end{document}